\theoremstyle{plain}
\newtheorem{theorem}{Theorem}[section]
\newtheorem{lemma}[theorem]{Lemma}
\newtheorem{corollary}[theorem]{Corollary}
\newtheorem{proposition}[theorem]{Proposition}
\newtheorem{prop-def}[theorem]{Proposition-Definition}
\crefname{prop-def}{Proposition-Definition}{Proposition-Definition}
\newtheorem*{main:uniform_finiteness}{\Cref{thm:main}}
\newtheorem*{main:quotient}{\Cref{cor:quotient1}}
\newtheorem*{main:rationalfibs}{\Cref{th:rational_fibers}}
\newtheorem*{main:ir-rational}{\Cref{th:rational_and_irrational}}
\newtheorem*{main:conical}{\Cref{th:conical}}
\newtheorem*{main:continuity}{\Cref{th:lamination_continuity}}
\crefname{claim}{Claim}{Claims}
\newtheorem*{claim*}{Claim}
\theoremstyle{definition}
\newtheorem{definition}[theorem]{Definition}
\newtheorem{remark}[theorem]{Remark}
\newtheorem{example}[theorem]{Example}
\newtheorem{convention}[theorem]{Convention}
\crefname{convention}{Convention}{Conventions}
\newcommand{\define}[1]{\emph{#1}}
\newcommand{\R}{\mathbb{R}}
\newcommand{\Z}{\mathbb{Z}}
\newcommand{\floor}[1]{\left\lfloor#1\right\rfloor}
\newcommand{\norm}[1]{\left\Vert#1\right\Vert}
\newcommand{\abs}[1]{\left\vert#1\right\vert}
\newcommand{\inv}{^{-1}}
\DeclareMathOperator{\Out}{Out}
\DeclareMathOperator{\Aut}{Aut}
\DeclareMathOperator{\Inn}{Inn}
\DeclareMathOperator{\rank}{rank}
\newcommand{\free}{\mathbb{F}} 
\newcommand{\factor}{{\EuScript F}} 
\newcommand{\F}{\factor} 
\renewcommand{\int}{\mathcal{I}}
\newcommand{\fc}{\factor} 
\DeclareMathOperator{\cv}{cv} 
\newcommand{\os}{{\EuScript X}} 
\newcommand{\X}{\os} 
\newcommand{\osbar}{\overline{\os}}
\DeclareMathOperator{\CV}{CV} 
\newcommand{\cvbar}{\overline{\mbox{cv}}}
\newcommand{\dsym}{d^{\mathrm{sym}}_\os} 
\newcommand{\len}{\ell}  
\DeclareMathOperator{\Lip}{Lip} 
\newcommand{\CS}{{\EuScript C}{\EuScript S}}
\newcommand{\DB}{\partial^2 \free }
\renewcommand{\CS}{\mathcal{CS}}
\renewcommand{\L}{\mathcal{L}}
\newcommand{\ind}{\mbox{ind}_\mathcal Q}
\newcommand{\ct}{\partial\iota} 
\newcommand{\I}{\mathbf{I}}
\newcommand{\ca}{\mathrm{Cay}}
\begin{document}
\title[Cannon--Thurston maps for hyperbolic free groups]{\textbf{\Large Cannon--Thurston maps for hyperbolic free group extensions}}
\author{Spencer Dowdall, Ilya Kapovich, and Samuel J. Taylor}
\date{\today}

\begin{abstract}
This paper gives a detailed analysis of the Cannon--Thurston maps associated to a general class of hyperbolic free group extensions. Let $\free$ denote a free groups of finite rank at least $3$ and consider a \emph{convex cocompact} subgroup $\Gamma\le\Out(\free)$, i.e. one for which the orbit map from $\Gamma$ into the free factor complex of $\free$ is a quasi-isometric embedding. The subgroup $\Gamma$ determines an extension $E_\Gamma$ of $\free$, and the main theorem of Dowdall--Taylor \cite{DT1} states that in this situation $E_\Gamma$ is hyperbolic if and only if $\Gamma$ is purely atoroidal.

Here, we give an explicit geometric description of the Cannon--Thurston maps $\partial \free\to\partial E_\Gamma$ for these hyperbolic free group extensions, the existence of which follows from a general result of Mitra. In particular, we obtain a uniform bound on the multiplicity of the Cannon--Thurston map, showing that this map has multiplicity at most $2\rank(\free)$. This theorem generalizes the main result of Kapovich and Lustig \cite{KapLusCT} which treats the special case where $\Gamma$ is infinite cyclic. We also answer a question of Mahan Mitra by producing an explicit example of a hyperbolic free group extension for which the natural map from the boundary of $\Gamma$ to the space of laminations of the free group (with the Chabauty topology) is not continuous.
\end{abstract}

\subjclass[2010]{Primary 20F65, Secondary 57M, 37B, 37D}

\maketitle

\section{Introduction}

A remarkable paper of Cannon and Thurston~\cite{CannonThurston}  proved that if $M$ is a closed hyperbolic 3--manifold which fibers over the circle $\mathbb S^1$ with fiber $S$, then the inclusion of $\widetilde S=\mathbb H^2$ into $\widetilde M=\mathbb H^3$ extends to a continuous $\pi_1(S)$--equivariant surjective map from $\partial \mathbb H^2=\mathbb S^1$ to $\partial \mathbb H^3=\mathbb S^2$. In particular, the inclusion $\pi_1(S)\le\pi_1(M)$ of word-hyperbolic groups extends to a continuous map $\partial\pi_1(S)\to\partial\pi_1(M)$ of their Gromov boundaries.
Though not published till 2007, this paper \cite{CannonThurston} has been highly influential since its circulation as a preprint in 1984. 
Consequently, if the inclusion $\iota\colon H\to G$ of a word-hyperbolic subgroup $H$ of a word-hyperbolic group $G$ extends to a continuous (and necessarily $H$--equivariant and unique) map $\ct\colon\partial H\to \partial G$, the map $\ct$ came to be called the \emph{Cannon--Thurston map}. 
It is easy to see that the Cannon--Thurston map always exists and is injective in the case that $H$ is a quasiconvex subgroup of $G$;
the above result of Cannon and Thurston \cite{CannonThurston} provided the first nontrivial example of existence of $\ct$ in the non-quasiconvex case.

Later the work of Mitra~\cite{MitraCTmaps-general,MitraCTmaps-trees,Mitra98} showed that the Cannon--Thurston map exists in several general situations corresponding to non-quasiconvex subgroups. 
In particular,  Mitra proved \cite{MitraCTmaps-general} that whenever 
\begin{equation}
\label{eqn:hyp_SES}
1 \longrightarrow H \longrightarrow G \longrightarrow \Gamma \longrightarrow 1 
\end{equation}
is a short exact sequence of three infinite word-hyperbolic groups, then the Cannon--Thurston map $\ct\colon\partial H\to\partial G$ exists and is surjective. Only recently did the work of Baker and Riley~\cite{BakerRiley} produce the first example of a word-hyperbolic subgroup $H$ of a word-hyperbolic group $G$ for which the inclusion $H\le G$ does not extend to a Cannon--Thurston map.
Analogs and generalizations of the Cannon--Thurston map have been studied in many other contexts, see for example~\cite{Klar,McM,Miyachi,LLR,LMS,Gerasimov,Bow07,Bow13,MP11,MjD14,Mj14,JKLO}. 
The best understood case concerns discrete isometric actions of surface groups on $\mathbb H^3$, where the most general results about Cannon--Thurston maps are due to Mj~\cite{Mj14}.  

The main results from the theory of JSJ decomposition for word-hyperbolic groups (see \cite{RipsSela} for the original statement and \cite{Levitt} for a clarified version) imply that if we have a short exact sequence \eqref{eqn:hyp_SES} of three infinite word-hyperbolic groups with $H$ being torsion-free, then $H$ is isomorphic to a free product of surface groups and free groups. Thus understanding the structure of the Cannon--Thurston map for such short exact sequences requires first studying in detail the cases where $H$ is a surface group or a free group. The case of word-hyperbolic extensions of closed surface groups is closely related to the theory of convex cocompact subgroups of mapping class groups, and the structural properties of the Cannon--Thurston map in this setting are by now well understood; \cite{LMS} for details. 

In this paper we consider the case of hyperbolic extensions of free groups and specifically the class of hyperbolic extensions introduced by Dowdall and Taylor \cite{DT1}. 
To describe this class, we henceforth fix a free group $\free$ of finite rank at least $3$. Following Hamenst\"adt and Hensel \cite{HaHe}, we say that a finitely generated subgroup $\Gamma\le\Out(\free)$ is \emph{convex cocompact} if the orbit map $\Gamma\to\F$ into the free factor graph $\F$ of $\free$ is a quasi-isometric embedding. 
Hyperbolicity of $\F$ \cite{BF14} and the definition of $\F$ respectively imply convex cocompact subgroups of $\Out(\free)$ are word-hyperbolic and that their infinite-order elements are all fully irreducible.
We say that a subgroup $\Gamma\le\Out(\free)$ is \emph{purely atoroidal} if every infinite-order element $\phi\in \Gamma$ is atoroidal (that is, no positive power of $\phi$ fixes a nontrivial conjugacy class in $\free$).  
Given any subgroup $\Gamma\le\Out(\free)$, the full pre-image $E_\Gamma$ of $\Gamma$ under the quotient map $\Aut(\free)\to\Out(\free)$ fits into a short exact sequence
\begin{equation}
\label{eqn:free_extension}
 1\longrightarrow \free\longrightarrow E_\Gamma\longrightarrow \Gamma\longrightarrow 1
\end{equation}
with kernel $\free\cong \Inn(\free)$.
The main result of \cite{DT1} proves that $E_\Gamma$ is hyperbolic whenever $\Gamma\le \Out(\free)$ is convex cocompact and purely atoroidal. 
From this, we see that if $\Gamma \le \Out(\free)$ is convex cocompact, then $E_\Gamma$ is hyperbolic if and only if $\Gamma$ is purely atoroidal.
Moreover, there is a precise sense \cite{TT} in which random finitely generated subgroups of $\Out(\free)$ 
satisfy these hypotheses 
and so define hyperbolic extensions as in \eqref{eqn:free_extension}. 

Our goal in this paper is to understand the Cannon--Thurston maps for hyperbolic extensions $E_\Gamma$ of $\free$ corresponding to 
convex cocompact subgroups $\Gamma$ of $\Out(\free)$.
Such extensions vastly generalize the hyperbolic free-by-cyclic groups with fully irreducible monodromy whose Cannon--Thurston maps were explored in detail by Kapovich and Lustig in \cite{KapLusCT}.
Our analysis extends many results of \cite{KapLusCT} and gives an explicit description of the Cannon--Thurston map in the general setting of purely atoroidal convex cocompact $\Gamma$.
Moreover, this description reveals quantitative global and local
features of the map and allows us to address multiple conjectures
regarding Cannon--Thurston maps in this setting. In the
case of groups $E_\Gamma$ corresponding to purely atoroidal convex
cocompact $\Gamma\le \Out(\free)$, we answer a question of Swarup (which appears as Question $1.20$ on Bestvina's list) by establishing uniform finiteness of the fibers of $\ct\colon \partial \free \to \partial E_\Gamma$.

\begin{main:uniform_finiteness}
Let $\Gamma\le \Out(\free)$ be purely atoroidal and convex cocompact, where $\free$ is a free group of finite rank at least $3$, and let $\ct\colon\partial \free\to\partial E_\Gamma$ denote the Cannon--Thurston map for the hyperbolic $\free$--extension $E_\Gamma$. Then for every $y\in \partial E_\Gamma$, the degree $\deg(y) = \#\left((\ct)^{-1}(y)\right)$ of the fiber over $y$ satisfies
\[1 \le \deg(y) \le 2\rank(\free).\]
In  particular, the fibers Cannon--Thurston map are all finite and of uniformly bounded size.
\end{main:uniform_finiteness}

To establish \Cref{th:main_1}, we relate Mitra's theory of ``ending laminations'' (\Cref{def:Mitra}) for hyperbolic group extensions \cite{MitraEndingLams} to the theory of algebraic laminations on free groups developed by Coulbois, Hilion, and Lustig \cite{CHL1, CHL2}.  A general result of Mitra~\cite{MitraEndingLams} about Cannon--Thurston maps for short exact sequences of hyperbolic groups (\Cref{th:Mj_lam} below) implies that distinct points $p,q\in \partial \free$ are identified by the Cannon--Thurston map $\ct\colon \partial \free\to\partial E_\Gamma$ if and only if $(p,q)$ is a leaf of an ending lamination $\Lambda_z$ on $\free$ for some $z\in \partial \Gamma$.
Since our $\Gamma$ is convex cocompact by assumption, the orbit map $\Gamma\to\fc$ into the free factor complex extends to a continuous embedding $\partial\Gamma\to\partial\fc$.
By Bestvina--Reynolds \cite{bestvina2012boundary} and Hamenst\"adt \cite{hamenstadt2013boundary},  the boundary $\partial \F$ consist of equivalence classes of arational $\free$--trees. Thus to each $z\in \partial \Gamma$ we may also associate a class $T_z$ of arational $\free$--trees.  
The tree $T_z$ moreover comes equipped with a \emph{dual lamination} $L(T_z)$, as introduced in \cite{CHL2}. Informally, $L(T_z)$ consists of lines in the free group which project to bounded diameter sets in the tree $T_z$. 
Our key technical result, \Cref{th:laminations_agree}, shows that for every $z\in\partial \Gamma$ we have $\Lambda_z=L(T_z)$. Combining this with Mitra's general theory \cite{MitraEndingLams}, we obtain the following explicit description of the Cannon--Thurston map:

\begin{main:quotient}
Let $\Gamma\le\Out(\free)$ be convex cocompact and purely atoroidal. 
Then the Cannon--Thurston map $\ct\colon \partial \free \to \partial E_\Gamma$ identifies points $a,b \in \partial \free$  if and only if there exists $z \in \partial \Gamma$ such that $(a,b) \in L(T_z)$.  That is, $\ct$ factors through the quotient of $\partial \free$ by the equivalence relation\[a\sim b \iff (a,b)\in L(T_z)\text{ for some }z\in \partial \Gamma\]
and descends to an $E_\Gamma$--equivariant homeomorphism $\nicefrac{\partial\free}{\sim} \to \partial E_\Gamma$. 
\end{main:quotient}

We derive \Cref{th:main_1} from \Cref{th:main_2} by using results of Coulbois--Hilion \cite{CoulboisHilion-index} concerning the \emph{$\mathcal Q$--index} for very small minimal actions of $\free$ on $\mathbb{R}$--trees. The point is that the laminations $\Lambda_z$ that Mitra constructs in \cite{MitraEndingLams} are a priori complicated and unwieldy objects from which it is difficult to extract information,
whereas the laminations $L(T_z)$ appearing in \Cref{th:main_2} are subject to the general theory $\R$--trees.
The equality $\Lambda_z=L(T_z)$ provided by \Cref{th:laminations_agree} thus allows us to access this theory and use it to analyze the Cannon--Thurston maps. 

However, establishing the equality $\Lambda_z = L(T_z)$ is nontrivial even in the special case, treated by Kapovich and Lustig \cite{KapLusCT}, when $\Gamma = \langle\phi\rangle$ is a cyclic group generated by an atoroidal fully irreducible element $\phi$. The general case considered here is considerably harder since our trees $T_z$ no longer enjoy the ``self-similarity'' properties of stable trees of atoroidal fully irreducibles.
 The laminations $\Lambda_z$ and $L(T_z)$ are defined in very different terms, and the main difficulty is in establishing the inclusion $\Lambda_z\subseteq L(T_z)$. 
The key step in this direction is \Cref{prop:limit_to_zero} which shows that if $g_i\in\Gamma$ is a quasigeodesic sequence converging to $z\in\partial\Gamma$, then $\ell_{T_z}(g_i(h))\to 0$ for every nontrivial $h\in \free$.
Note that in this situation it is fairly straightforward to see that the projective geodesic current $[\mu]=\lim_{i\to\infty} [\eta_{g_i(h)}]$ satisfies $\langle T_z, \mu\rangle=0$ (where $\langle\cdot,\cdot\rangle$ is the intersection pairing constructed in \cite{kapovich2007geometric}), but this is much weaker than the needed conclusion $\lim_{i\to\infty} \ell_{T_z}(g_i(h))=0$. 
The proof of \Cref{prop:limit_to_zero} relies on recent results of Dowdall and Taylor~\cite{DT1} about folding paths in Culler and Vogtmann's Outer space $\X$ that remain close to the orbit of a purely atoroidal convex cocompact subgroup $\Gamma$.

Our \Cref{th:laminations_agree}, establishing that for every
$z\in\partial \Gamma$ we have $\Lambda_z=L(T_z)$, has quickly found
useful applications in a new paper of Mj and Rafi~\cite{MjRafi}
regarding quasiconvexity in the context of hyperbolic group
extensions. See Proposition~4.3 in~\cite{MjRafi} and its applications
in Theorem~4.11 and Theorem~4.12 of ~\cite{MjRafi}. We remark that the
quasiconvexity result given by Theorem~4.12 of \cite{MjRafi} is also
proved by different methods in the forthcoming paper \cite{DT2}.

\subsection*{Rational and essential points}
In addition to \Cref{th:main_1} and \Cref{th:main_2}, we obtain fine information about the Cannon--Thurston map in regards to rational and essential points.
Recall that a point $\xi$ in the boundary $\partial G$ of a word-hyperbolic group $G$ is called \emph{rational} if 
there is an infinite-order element $g\in G$ such that $\xi$ equals the limit $g^\infty$ in $G\cup \partial G$ of the sequence $\{g^n\}$.
For the short exact sequence \eqref{eqn:free_extension}, a point $y\in \partial E_\Gamma$ is called \emph{$\Gamma$--essential} if there exists a (necessarily unique) point $\zeta(y)\in \partial\Gamma$ such that $y=\ct(p)$ for a point $p\in\partial\free$ that is \emph{proximal} for $L(T_{\zeta(y)})$ in the sense of \Cref{defn:proximal} below.  Informally, $\Gamma$--essential points are the $\ct$--images of points in $\partial\free$ that ``remember'' in an essential way the lamination $L(T_z)$ for some $z\in\partial\Gamma$. 

We write $\deg(y)\colonequals \#\left((\ct)\inv(y)\right)$ for the cardinality of the Cannon--Thurston fiber over $y\in\partial E_\Gamma$ (so $1\le \deg(y)\le 2\rank(\free)$ by \Cref{th:main_1}).
Every $y\in \partial E_\Gamma$ with $\deg(y)\ge 2$ is $\Gamma$--essential and moreover has Cannon--Thurston fiber given by $(\ct)^{-1}(y)=\{p\}\cup\{q\in \partial \free \mid (p,q)\in L(T_{\zeta(y)})\}$ for \emph{every} $p\in (\ct)\inv(y)$ (\Cref{lem:fiber_description}).
However, there are also may $\Gamma$--essential points with $\deg(y)=1$. Our next result describes the fibers of $\ct$ over rational points of $\partial E_\Gamma$.

\begin{main:rationalfibs}
Suppose that $1 \to \free \to E_\Gamma \to \Gamma \to 1$ is a hyperbolic extension with $\Gamma \le \Out(\free)$ convex cocompact. 
Consider a rational point $g^\infty\in \partial E_\Gamma$, where $g\in E_\Gamma$ has infinite order.
\begin{enumerate}
\item Suppose that $g^k$ is equal to $w\in \free\lhd E_\Gamma$ for some $k\ge 1$ (i.e., $g$ projects to a finite order element of $\Gamma$). Then $(\ct)^{-1}(g^\infty) = \{w^\infty\}\subset \partial \free$ and so $\deg(g^\infty) = 1$.
\item Suppose that $g$ projects to an infinite-order element $\phi\in\Gamma$. Then there exists $k\ge 1$ such that the automorphism $\Psi\in\Aut(\free)$ given by $\Psi(w) = g^kwg^{-k}$ is forward rotationless (in the sense of \cite{FH11,CoulboisHilion-botany}) and its set $\mathrm{att}(\Psi)$ of attracting fixed points in $\partial \free$ is exactly $\mathrm{att}(\Psi)=(\ct)\inv(g^\infty)$. Moreover, $g^\infty$ is $\Gamma$--essential and $\zeta(g^\infty) = \phi^\infty$.
\end{enumerate}
\end{main:rationalfibs}

In the case of a cyclic group $\langle\phi\rangle$ generated by an atoroidal fully irreducible automorphism $\phi$, Kapovich and Lustig \cite{KapLusCT} showed that every point $y\in\partial E_{\langle\phi\rangle}$ with $\deg(y)\ge 3$ is rational. We show that when $\Gamma$ is nonelementary this conclusion no longer holds and rather that, with some unavoidable exceptions, rational points in $\partial E_\Gamma$ come in a specific way from rational points in $\partial\Gamma$:

\begin{main:ir-rational}
Suppose that $1 \to \free \to E_\Gamma \to \Gamma \to 1$ is a hyperbolic extension with $\Gamma \le \Out(\free)$ convex cocompact. Then the following hold:
\begin{enumerate}
\item If $y\in \partial E_\Gamma$ has $\deg(y)\ge 3$ and $\zeta(y)\in\partial \Gamma$ is rational, then $y$ is rational.
\item If $y\in \partial E_\Gamma$ has $\deg(y)\ge 2$ and $\zeta(y)\in\partial\Gamma$ is irrational, then $y$ is irrational. 
\end{enumerate}
\end{main:ir-rational}

\subsection*{Conical limit points}
Recall that a point $\xi$ in the boundary $\partial G$ of a word-hyperbolic group $G$ is a \emph{conical limit point} for the action of a subgroup $H\le G$ on $\partial G$ if there exists a geodesic ray in the Cayley graph of $G$ that converges to $\xi$ and has a bounded neighborhood that contains infinitely many elements of $H$. Combining the results of this paper with the results of \cite{JKLO}, we obtain the following:

\begin{main:conical}
Let $\Gamma\le\Out(\free)$ be purely atoroidal and convex cocompact. If $y\in \partial E_\Gamma$ is $\Gamma$--essential, then $y$ is not a conical limit point for the action of $\free$ on $\partial E_\Gamma$. 
In particular, if $\deg(y)\ge 2$ or if $y=g^\infty$ for some $g\in E_\Gamma$ projecting to an infinite-order element of $\Gamma$, then $y$ is not a conical limit point for the action of $\free$.
\end{main:conical}

It is known (see \cite{Gerasimov,JKLO}) in a very general convergence group situation that if a Cannon--Thurston map exists then every conical limit point has exactly one pre-image under the Cannon--Thurston map; thus points with $\ge 2$ pre-images cannot be conical limit points. However, \Cref{cor:main_4} also applies to many $\Gamma$--essential points $y\in \partial E_\Gamma$  with $\deg(y)=1$.

\subsection*{Discontinuity of ending laminations}
In \cite{MitraEndingLams}, Mitra asks whether the map which associates to each point $z\in\partial \Gamma$ the corresponding ending lamination $\Lambda_z$ is continuous with respect to the Chabauty topology on the space of laminations. Of course, in the case of extensions by $\Z$ there is nothing to check since the boundary $\partial \Z$ is discrete. In \Cref{sec:continuity}, we answer Mitra's question in the negative by producing a hyperbolic extension $E_\Gamma$ for which the map $z \mapsto \Lambda_z$ is not continuous. This is done explicitly in \Cref{ex:disc}.

Besides establishing this discontinuity, we also provide a positive result about subconvergence of ending laminations. For the statement, let $\L(\free)$ denote the space of laminations on $\free$ equipped with the Chabauty topology (recalled in \Cref{def:Chabauty}) and let $\Lambda_z$ denote Mitra's \cite{MitraEndingLams} ending lamination for $z\in\partial \Gamma$ (see \Cref{def:Mitra}). For a lamination $L \in \L(\free)$, the notation $L'$ denotes the set of accumulation points of $L$, in the usual topological sense.

\begin{main:continuity}
Let $\Gamma\le\Out(\free)$ be purely atoroidal and convex cocompact, 
and let $\Lambda_z\in\L(\free)$ denote the ending lamination associated to $z\in \partial \Gamma$. Then for any sequence $z_i$ in $\partial \Gamma$ converging to $z$ and any subsequence limit $L$ of the corresponding sequence $\Lambda_{z_i}$ in $\L(\free)$, we have
\[
\Lambda'_z \subset L \subset \Lambda_z.
\]
\end{main:continuity}

\noindent This result can be viewed as a statement about the map $\partial \Gamma\to\L(\free)$, given by $z\mapsto \Lambda_z$, possessing a weak form of continuity.

\subsection*{Acknowledgments}

We are grateful to Arnaud Hilton for providing us a copy of the draft 2006 preprint \cite{CHLunpub} and to Chris Leininger for useful conversations. We also thank the referee for helpful comments.

The first author was partially supported by NSF grant DMS-1204814.
The second author was partially supported by the
NSF grant DMS-1405146. The third author was partially supported by NSF grant DMS-1400498. 

\section{Cannon--Thurston maps}\label{sec:ct-intro} 
In this section, we recall some facts about Cannon--Thurston maps for general hyperbolic extensions. For a word-hyperbolic group $G$, we denote its Gromov boundary by $\partial G$. The following result establishes the existence of the Cannon--Thurston map:

\begin{prop-def}[Mitra \cite{MitraCTmaps-general}]\label{pd:mitra}
Suppose that $1 \to H \to G \to \Gamma \to 1$ is an exact sequence of word-hyperbolic groups. Then the inclusion $\iota\colon H \to G$ admits a continuous extension $\hat\iota\colon H \cup \partial H \to G \cup \partial G$ with $\hat\iota(\partial H)\subseteq \partial G$.
The restricted map $\ct\colonequals\hat\iota|_{\partial H}\colon \partial H \to \partial G$ is called the \define{Cannon--Thurston map} for the inclusion $H \to G$; it is surjective whenever $H$ is infinite.
\end{prop-def}

For an element $g\in G$ denote by $\Phi_g$ the automorphism $h\mapsto
ghg^{-1}$ of $H$. We denote by $\phi_g\in\Out(H)$ the outer automorphism class of $\Phi_g$. Similarly, for an element $q\in \Gamma$ denote by
$\phi_q\in \Out(H)$ the outer automorphism class of $\Phi_g$, where
$g\in G$ is any element that maps to $q$; note that the class $\phi_q$ is independent of the chosen lift $g$.  For a conjugacy class $[h]$ in $H$ and an element $q\in \Gamma$ we
also write $[q(h)]:=[\phi_q(h)]=[ghg^{-1}]$, where $g\in G$ is any
element projecting to $q$.

By construction, the Cannon--Thurston map $\ct\colon \partial H \to \partial G$ in \Cref{pd:mitra} is $H$--equivariant with respect to the left translation actions of $H$ on $\partial H$ and $\partial G$. However, $\ct$ actually turns out to be $G$--equivariant with respect to the action $G \curvearrowright \partial H$ defined by $g\cdot p \colonequals \Phi_g(p)$ for $g\in G$ and $p\in \partial H$. 
Notice that the restricted action $H\curvearrowright \partial H$, namely $h\cdot p = \Phi_h(p)$, agrees with the usual action of $H$ on $\partial H$ by left translation.

\begin{proposition}\label{prop:action}
Suppose $1\to H \to G \to \Gamma\to 1$ is an exact sequence of hyperbolic groups.
Then the map $(g,p)\mapsto g\cdot p$ defines an action of $G$ on $\partial H$ by homeomorphisms. Moreover, the Cannon--Thurston map  $\ct\colon \partial H \to \partial G$ is $G$--equivariant.
\end{proposition}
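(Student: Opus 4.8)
The plan is to build the action of $G$ on $\partial H$ directly from the homomorphism $G\to\Aut(H)$, $g\mapsto\Phi_g$, and then to deduce equivariance of $\ct$ from the tautological identity that $\Phi_g$ followed by the inclusion $\iota\colon H\to G$ equals conjugation by $g$ inside $G$.

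For the action: since $H\lhd G$, each $\Phi_g\colon h\mapsto ghg^{-1}$ is an automorphism of $H$, and $g\mapsto\Phi_g$ is a homomorphism $G\to\Aut(H)$. I would then record the standard fact that every $\phi\in\Aut(H)$ is bi-Lipschitz for a word metric on $H$: fixing a finite generating set $S$, the bounds $|\phi(h)|\le M|h|$ and $|\phi^{-1}(h)|\le M'|h|$, with $M,M'$ the maximal lengths of $\phi^{\pm 1}$ on $S$, give two-sided linear control. Hence $\phi$ is a quasi-isometry of $H$ and extends to a homeomorphism $\bar\phi$ of $H\cup\partial H$, restricting to a homeomorphism $\partial\phi$ of $\partial H$; and boundary extension is functorial, $\partial(\phi_1\phi_2)=\partial\phi_1\circ\partial\phi_2$ and $\partial\,\mathrm{id}=\mathrm{id}$. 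Therefore $g\mapsto\partial\Phi_g$ is a homomorphism $G\to\Homeo(\partial H)$, which is the asserted action; here $g\cdot p$ means $\partial\Phi_g(p)$, the value of this boundary extension at $p\in\partial H$, which is what the statement abbreviates as $\Phi_g(p)$.

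For equivariance: the key relation is $\iota\circ\Phi_g=c_g\circ\iota$ as maps $H\to G$, where $c_g\colon x\mapsto gxg^{-1}$ is the inner automorphism of $G$ --- both sides send $h\in H$ to the element $ghg^{-1}$ of $G$. Writing $L_g,R_g$ for left and right translation on $G$, we have $c_g=L_g\circ R_{g^{-1}}$, with $L_g$ an isometry of $G$ whose boundary extension is by definition the left-translation action of $g$ on $\partial G$, and $R_{g^{-1}}$ a self-map of bounded displacement ($d(xg^{-1},x)=|g|$ for all $x$), hence trivial on $\partial G$. So $c_g$ extends to a homeomorphism $\bar c_g$ of $G\cup\partial G$ whose restriction $\partial c_g$ to $\partial G$ is left translation by $g$. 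Now $\hat\iota\circ\bar\Phi_g$ and $\bar c_g\circ\hat\iota$ are continuous maps $H\cup\partial H\to G\cup\partial G$ that agree with $\iota\circ\Phi_g=c_g\circ\iota$ on the dense set $H$, so --- the target being Hausdorff --- they coincide; restricting to $\partial H$ gives
\[
\ct(g\cdot p)=\ct\bigl(\partial\Phi_g(p)\bigr)=\partial c_g\bigl(\ct(p)\bigr)=g\cdot\ct(p)
\qquad(g\in G,\ p\in\partial H),
\]
i.e.\ $G$-equivariance of $\ct$, with $\partial G$ carrying its usual left-translation action.

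The one delicate point, I expect, is the identification $\partial c_g=\partial L_g$: it is tempting but false to say that an inner automorphism, being ``close to the identity'', acts trivially on $\partial G$; in fact $c_g=L_gR_{g^{-1}}$ has unbounded displacement, only the factor $R_{g^{-1}}$ dies on the boundary, and the surviving $L_g$ is precisely what makes $\ct$ genuinely equivariant rather than constant along $G$-orbits. The rest --- automorphisms are quasi-isometries, functoriality of the boundary, and uniqueness of continuous extensions to the Gromov bordification (since the target is Hausdorff) --- is routine.
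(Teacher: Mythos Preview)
Your proof is correct and takes essentially the same approach as the paper: both rest on the identity $\iota\circ\Phi_g=c_g\circ\iota$ together with the observation that conjugation by $g$ acts on $\partial G$ as left translation by $g$ (since the right-multiplication factor has bounded displacement). The paper simply phrases this via an explicit sequence computation $\lim gh_n=\lim gh_ng^{-1}$ rather than your density/uniqueness-of-extension argument, and is terser about why the action is by homeomorphisms, but the substance is identical.
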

\begin{proof}
While this is implicit in \cite{KapLusCT}, we include a proof for completeness.
The fact that $(g,p)\mapsto g\cdot p$ defines a group action by homeomorphisms follows directly from the definitions.
Choose $p\in \partial H$ and $g\in G$. To prove $G$--equivariance we must show that $\ct(\Phi_g(p))=g\cdot \ct(p)$. 

Choose a sequence $h_n\in H$ such that $h_n\to p$ in the topology of $H\cup \partial H$. By definition of $\ct$ it follows
 that $h_n\to \ct(p)$ in the topology of $G\cup \partial G$. In $G$ we have $gh_ng^{-1}=\Phi_g(h_n)$ so that 
\[
g\cdot \ct(p)=\lim_{n\to\infty} gh_n=\lim_{n\to\infty} gh_ng^{-1}=\lim_{n\to\infty} \Phi_g(h_n)
\]
 in the topology of $G\cup \partial G$. But definition of $\ct$ the last limit above is exactly $\ct(\Phi_g (p))$. 
\end{proof}

\section{Background on free groups, laminations and $\mathcal Q$--index}

For the entirety of this section let $\free$ be a free group of finite rank $N\ge 2$. We will also fix a free basis $X$ of $\free$ and the Cayley graph $\ca(\free,X)$ of $\free$ with respect to $X$.

\subsection{Laminations on free groups}\label{sec:laminations_on_free}

We denote $\DB:=\{(p,q)\in \partial\free \times \partial\free \colon p\ne q\}$ and endow $\DB$ with the subspace topology from the product topology on $\partial\free \times \partial\free$. There is a natural diagonal left action of $\free$ on $\DB$ by left translations: $w(p,q)\colonequals(wp,wq)$ where $w\in\free$ and $(p,q)\in\DB$. 

An \emph{algebraic lamination} on $\free$ is a closed $\free$--invariant subset $L\subseteq \DB$ such that $L$ is also invariant with respect to the ``flip map'' $\DB\to\DB$, $(p,q)\mapsto (q,p)$.  If $L$ is an algebraic lamination on $\free$, an element $(p,q)\in L$ is also referred to as a \emph{leaf} of $L$.  In the Cayley graph $\ca(\free,X)$ of $\free$ with respect to the free basis $X$,  every leaf $(p,q)\in L$ is represented by a unique unparameterized bi-infinite geodesic $l$ from $p$ to $q$ in $\ca(\free,X)$. In this situation we will also sometimes say that $l$ is a leaf of $L$.   We refer the reader to \cite{CHL1,CHL2} for the background information on algebraic laminations.  

We say that a subset $L\subseteq \DB$ is \emph{diagonally closed} if whenever $p,q,r\in \partial\free$ are three distinct points such that $(p,q),(q,r)\in L$ then $(p,r)\in L$.

An important class of laminations are those corresponding to conjugacy classes of $\free$. For $g \in \free \setminus \{1\}$, we denote by $g^{+\infty} \in \partial \free$ the unique forward limit of the sequence $(g^n)_{n\ge1}$ in $\free \cup \partial \free$. Define $g^{-\infty}$ similarly and note that $(g^{-1})^{+\infty} = g^{-\infty}$. Then define the algebraic lamination
\[
L(g) = \free \cdot (g^{+\infty}, g^{-\infty}) \cup \free \cdot (g^{-\infty},g^{\infty}).
\]
Note that $L(g)$ depends only on the conjugacy class of $g$. Moreover, $L(g)$ is indeed a closed subset of $\DB$ and so is a bona fide algebraic lamination. In what follows, for a subset $A$ of a topological space, we denote the closure of $A$ by $\overline{A}$ and its set of accumulation points by $A'$. For a collection $\Omega$ of conjugacy classes of $\free$, we let $L(\Omega)$ denote the smallest algebraic lamination containing $L(g)$ for each $g\in\Omega$. We observe that 
\begin{align}
L(\Omega) = \overline{\bigcup_{g \in \Omega}L(g)}
\end{align}
Since each $L(g)$ is itself closed in $\DB$, we see that the above closure is unnecessary when $\Omega$ is finite.

Finally, we denote the set of all laminations of $\free$ by $\L(\free)$, which we consider with the Chabauty topology. We recall the definition of this topology:

\begin{definition}[Topology on $\L(\free)$] \label{def:Chabauty}
Let $Y$ be a locally compact metric space and let $C(Y)$ be the collection of closed subsets of $Y$. The \define{Chabauty topology} on $C(Y)$ is defined as the topology generated by the subbasis consisting of
\begin{enumerate}
\item $\mathcal{U}_1(K) = \{C \in C(Y) : C \cap K = \emptyset \}$ for $K \subset Y$ compact.
\item $\mathcal{U}_2(O) = \{ C \in C(Y): C \cap O \neq \emptyset \}$ for $O \subset Y$ open.
\end{enumerate}
\end{definition}

A geometric interpretation of convergence in the Chabauty topology is stated in \Cref{lem:Cabauty_convergence}; it will be needed in \Cref{sec:continuity}.  Recall that the space $C(Y)$ is always compact \cite{canary2006fundamentals}. Returning to the situation of algebraic laminations of $\free$, we note that $\L(\free)$ is closed in $C(\partial^2 \free)$ and hence is itself compact.  We henceforth consider $\L(\free)$ with the subspace topology and refer to this as the Chabauty topology on $\L(\free)$.

\subsection{Outer space and its boundary}
Outer space, denoted $\cv$ and introduced by Culler--Vogtmann in \cite{CVouter}, is the space of $\free$--marked metric graphs, up to some natural equivalence. A \emph{marked graph} $(G,\phi)$ is a core graph $G$ (finite with no valence one vertices) equipped with a marking $\phi\colon R \to G$, which is a homotopy equivalence from a fixed rose $R$ with $\mathrm{rank}(\free)$ petals to the graph $G$. A \emph{metric} on $G$ is a function $\ell$ assigning to each edge of $G$ a positive real number (its \emph{length}) and we call the sum of the lengths of the edges of $G$ its \emph{volume}. A marked metric graph is a triple $(G,\phi, \ell)$, and Outer space is defined to be set of marked metric graph up to \emph{equivalence}, where $(G_1,\phi_1, \ell_1)$ is equivalent to $(G_2,\phi_2,\ell_2)$ if there is an isometry from $G_1$ to $G_2$ in the homotopy class of the \emph{change of marking} $\phi_2 \circ \phi_1^{-1}\colon G_1 \to G_2$. \emph{Projectivized Outer space} $\X$, also sometimes denoted $\CV$, is then defined to be the subset of $\cv$ consisting of graphs of volume $1$. Although points in $\cv$ are as described above, we will often denote a marked metric graph simply by its underlying graph $G$ suppressing the marking and metric.

Given $G \in \cv$, the marking associated to $G$ allows one to measure the length of a conjugacy class $\alpha$ of $\free$. In particular, there is a unique immersed loop in $G$ corresponding to the homotopy class $\alpha$ which we denote by $\alpha\vert G$. The \emph{length of $\alpha$ in $G$}, denoted $\ell(\alpha\vert G)$, is the sum of the lengths of the edges of $G$ crossed by $\alpha\vert G$, counted with multiplicites. The standard topology on $\cv$ is defined as the smallest topology such that each of the length functions $\ell(\alpha\vert\;\cdot\;)\colon \cv \to \mathbb{R}_+$ is continuous \cite{CVouter,Paulin}. 

Given a point $(G,\phi,\ell)$ in $\cv$, we can define $T$ to be the universal cover of $G$ equipped with a metric obtained by lifting the metric $\ell$ and also equipped with an action of $\free$ on $T$ by covering transformations (where $\free$ and $\pi_1(G)$ are identified via the marking $\phi$). Then $T$ is an $\R$--tree equipped with a minimal free discrete isometric action of $\free$.  Under this correspondence, equivalent marked metric graphs correspond to $\free$--equivariantly isometric $\R$--trees.  This procedure provides an identification between $\cv$ and the space of minimal free discrete isometric actions of $\free$ on $\R$--trees, considered up to $\free$--equivariant isometries.  If $T$ corresponds to $(G,\phi,\ell)$ then for every $w\in \free$ we have $\ell(w\vert G)=\ell_T(w):=\min_{x\in T} d_T(x,wx)$. We will also sometime use the notation $i(T,w)$ to denote the translation length of $w$ in $T$, i.e. $i(T,w) = \ell_T(w)$. This notation refers to the intersection form studied in \cite{kapovich2007geometric}; the details of which are not needed here.

We denote by $\cvbar$ the set of all very small minimal isometric
actions of $\free$ on $\R$--trees, considered up to $\free$--equivariant
isometries. As usual, for $T\in\cvbar$ and $w\in \free$, define the \emph{translation length} of $w$ on $T$ as $\ell_T(w):=\inf_{x\in T} d(x,wx)$. 
 It is known that $\cvbar$ is equal to the closure of $\cv$ with respect to the ``axes topology;'' see \cite{CL95,BF-Outer} for the original proof, and see \cite{Gui98} for a generalization. We denote the projectivization of $\cvbar$ by $\overline{\X} = \X \cup \partial \X$. Hence, $\partial \X$ denotes projective classes of very small minimal actions of $\free$ on $\R$--trees which are not free and simplicial; this is the so-called  boundary of Outer space. We remark that $\overline{\X}$ is compact.

We recall how $\Aut(\free)$ and $\Out(\free)$ act on $\cvbar$.  If $T\in\cvbar$ and $\Phi\in\Aut(\free)$, the tree $T\Phi\in \cvbar$ is defined as follows. As a set and a metric space we have $T\Phi=T$. The action of $\free$ is modified via $\Phi$: for every $x\in T$ and $w\in \free$ we have $w\underset{T\Phi}{\cdot}x=\Phi(w) \underset{T}{\cdot}x$.  This formula defines a right action of $\Aut(\free)$ on $\cvbar$. The subgroup $\Inn(\free)\le \Aut(\free)$ is contained in the kernel of this action and therefore the action descends to a right action of $\Out(\free)$ on $\cvbar$: for $\phi\in\Out(\free)$ and $T\in\cvbar$ we have $T\phi\colonequals T\Phi$, where $\Phi\in\Aut(\free)$ is any automorphism in the outer automorphism class $\phi$. At the level of translation length functions, for $T\in\cvbar$, $w\in\free$ and $\phi\in\Out(\free)$  we have $\ell_{T\phi}(w)=\ell_T(\phi(w))$.  Finally, these right actions of $\Aut(\free)$ and $\Out(\free)$ on $\cvbar$ can be transformed into left actions by putting $\Phi T:= T\Phi^{-1}$, $\phi T:=T \phi^{-1}$ for $T\in \cvbar$, $\phi\in\Out(\free)$ and $\Phi\in\Aut(\free)$.

\subsection{Metric properties of Outer space}

For the applications in this paper, we will need a few facts from the metric theory of Outer space. We refer the reader to
\cite{FMout, BF14, DT1} for details on the relevant background. 

If $T_1=(G_1,\phi_1,\ell_1)$ and $T_2=(G_2,\phi_2,\ell_2)$ are two points in $\cv$,  the \emph{extremal Lipschitz distortion} $\Lip(T_1,T_2)$, also sometimes denoted $\Lip(G_1,G_2)$, is the infimum of the Lipschitz constants of all the Lipschitz maps $f\colon (G_1,\ell_1)\to (G_2,\ell_2)$ that are freely homotopic to the the change of marking $\phi_2\circ \phi_1^{-1}$. If one views $T_1$ and $T_2$ as $\R$--trees, then $\Lip(T_1,T_2)$ is the infimum of the Lipschitz constants among all $\free$--equivariant Lipschitz maps $T_1\to T_2$.  It is known that 
\[\Lip(T_1,T_2)=\max_{w\in \free\setminus\{1\}} \frac{\ell_{T_2}(w)}{\ell_{T_1}(w)}.\]
 For $T_1,T_2\in \X$ we put
\[d_\X(T_1,T_2)\colonequals\log \Lip(T_1,T_2)\]
and call $d_\X(T_1,T_2)$ the \emph{asymmetric Lipschitz distance} from $T_1$ to $T_2$. It is known that $d_\X$ satisfies all the axioms of being a metric on $\mathcal X$ except that $d_\X$ is, in general, not symmetric as there exist $T_1,T_2\in\X$ such that $d_\X(T_1,T_2)\ne d_\X(T_2,T_1)$. Because of this asymmetry, it is sometimes convenient to consider the symmetrization of the Lipschitz metric:
\[\dsym(T_1,T_2) \colonequals d_\X(T_1,T_2) + d_\X(T_2,T_1)\]
which is an actual metric on $\X$ and induces the standard topology \cite{FMout}. For a subset $A \in \X$, we denote by $N_K(A)$ the \emph{symmetric $K$--neighborhood} of $A$, which is the neighborhood  of $A$ considered with the symmetric metric.

It is known that for any $T_1,T_2\in\X$ there exists a unit-speed $d_\X$--geodesic $\gamma\colon [a,b]\to \X$ given by a \emph{standard geodesic} from $T_1$ to $T_2$ in $\X$. Such a geodesic is a concatenation of a \emph{rescaling path}, which only alters the edge lengths of $T_1$, followed by a \emph{folding path}. This geodesic has the property that $\gamma(a)=T_1$, $\gamma(b)=T_2$, $b-a=d_\X(T_1,T_2)$ and that for any $a\le t\le t'\le b$ one has $t'-t=d_\X(\gamma(t),\gamma(t'))$. The folding path $\gamma(s)$ has some additional properties arising from its specific construction. We omit describing these properties for the moment (and refer the reader to \cite{FMout,BF14, DT1} for details), but will use them as needed in our arguments.

If $\I\subseteq \R$ is a (possibly infinite) interval, we also say that $\gamma\colon\I\to\X$ is a folding path if for every finite subinterval $[a,b]\subseteq \I$ the restriction $\gamma|_{[a,b]}\colon[a,b]\to \X$ is a folding path giving a unit speed $d_\X$--geodesic in the above sense. Then $\gamma\colon\I\to\X$ is also a unit-speed $d_\X$--geodesic.

\subsection{Dual laminations of very small trees}\label{sec:dual_laminations}

\begin{definition}[Dual lamination]\label{def:dual_lam}
Let $T\in\cvbar$. For each $\epsilon >0$, let $\Omega^{\le \epsilon}(T)$ denote the collection of $1\ne g \in \free$ with $\ell_T(g) \le \epsilon$. We form the algebraic lamination generated by these $\epsilon$--short conjugacy classes:
\[
L^{\le \epsilon}(T) = L(\Omega^{\le \epsilon}(T))=\overline{\bigcup_{g \in \Omega^{\le \epsilon}(T)} L(g)} \subset \partial^2 \free.
\]
The \define{dual lamination} $L(T)\subseteq \DB$  of $T$ is then defined to be
\[
L(T) := \bigcap_{\epsilon>0}L^{\le \epsilon}(T).
\]
\end{definition}
\begin{remark}\label{rem:L(T)}
Note that $L^{\le \epsilon}(T)$ and $L(T)$ are in fact algebraic laminations on $\free$.
Further, it is well-known~\cite{CHL2} and not hard to show that $L(T)$ consists of all $(p,q)\in\DB$ such that for every $\epsilon>0$ and every finite subword $v$ of the bi-infinite geodesic from $p$ to $q$ in $\ca(\free,X)$ there exists a cyclically reduced word $w$ over $X^{\pm 1}$ with $\ell_T(w)\le \epsilon$ such that $v$ is a subword of $w$.
\end{remark}

In this paper, we will only be concerned with a certain class of trees $T \in \cvbar$:
\begin{definition}[Arational tree]
A tree $T\in\cvbar$ is called \emph{arational} if there does not exist a proper free factor $F$ of $\free$ and an $F$--invariant subtree $Y\subseteq T$ such that $F$ acts on $Y$ with dense orbits.
\end{definition}

In \cite{Rey12} Reynolds obtained a useful characterization of arational trees in different terms. This characterization implies that if $T\in\cvbar$ does not arise as a dual tree to a geodesic lamination on a once-punctured surface, then $T$ is arational if and only if $T$ is ``indecomposable'' (in the sense of~\cite{Gui08}) and $\free$ acts on $T$ freely with dense orbits. In particular, if $\phi\in\Out(\free)$ is an atoroidal fully irreducible, then the stable tree $T_\phi$ (discussed in \Cref{sect:fully-irred} below) is free and arational; see \cite{CoulboisHilion-botany}. 

\subsection{The factor complex and its boundary} \label{sec:factor_complex}
The \emph{free factor complex} of $\free$ (for rank$(\free) \ge 3$) is the complex $\F$ defined as following: vertices of $\F$, are conjugacy classes of free factors of $\free$ and vertices $A_0, \ldots, A_k$ span an $k$--simplex if these classes have nested representatives $A_0 <  \dotsb < A_k$. The complex $\F$ was introduced in \cite{HVff} and has since become a central tool for studying the geometry of $\Out(\free)$. In particular, the following theorem is most important for our purposes.

\begin{theorem}[Bestvina--Feighn \cite{BF14}] \label{th:BFhyp}
The free factor complex $\F$ is Gromov-hyperbolic; moreover, an element $\phi\in\Out(\free)$ acts on $\F$ as a loxodromic isometry if and only if $\phi$ is fully irreducible.
\end{theorem}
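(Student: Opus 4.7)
The plan is to follow the Bestvina--Feighn strategy of using folding paths in Culler--Vogtmann Outer space $\X$ to produce a well-controlled family of quasigeodesics in $\F$. First define a coarse projection $\pi\colon \X \to \F$ by sending a marked metric graph $G$ to the set of conjugacy classes of proper free factors $[A]$ that are carried by some proper connected subgraph of $G$; a standard argument shows that $\pi(G)$ has diameter uniformly bounded in $\F$. Since every $G\in\X$ has many proper subgraphs, $\pi$ is defined everywhere and is coarsely $\Out(\free)$--equivariant.

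The central technical step is then to show that standard folding paths $\gamma\colon [a,b]\to\X$ project to unparameterized quasigeodesics in $\F$ with uniform constants. The key phenomenon is ``definite progress'': a proper free factor carried by a subgraph of $\gamma(a)$ cannot remain in $\pi(\gamma(t))$ for all large $t$, because the folding process forces the illegal turns relative to the Whitehead graph of that factor to be consumed, eventually producing a different (incomparable) proper free factor in the image. Combining this definite-progress lemma with a general criterion promoting a coarsely transitive family of uniform unparameterized quasigeodesics satisfying a Morse-type projection estimate to $\delta$--hyperbolicity of the target (as in Masur--Minsky, or Hamenst\"adt's variant) then yields that $\F$ is Gromov-hyperbolic. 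I expect this to be the main obstacle: tracking how carried free factors evolve along folds requires intricate combinatorics of Stallings folds and illegal turns, and this is where the fine structure of Outer space really has to be exploited.

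For the loxodromic characterization, suppose first that $\phi\in\Out(\free)$ is fully irreducible. Then $\phi$ admits an irreducible train track representative whose associated folding axis in $\X$ is, up to bounded error, invariant under $\phi$ and translated by a positive amount. Applying $\pi$ and using the quasigeodesic property yields a $\phi$--invariant unparameterized quasigeodesic in $\F$ on which $\phi$ translates by a positive amount, so $\phi$ acts loxodromically. Conversely, if $\phi$ is not fully irreducible, then some positive power $\phi^k$ fixes the conjugacy class of a proper free factor $[A]$, so the $\langle\phi\rangle$--orbit of the vertex $[A]\in\F$ is a finite set of vertices and hence bounded. Since any isometry of a Gromov-hyperbolic space with a bounded orbit is elliptic, $\phi$ cannot act loxodromically, completing the equivalence.
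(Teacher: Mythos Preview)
The paper does not prove this theorem; it is stated as a cited result of Bestvina--Feighn \cite{BF14} and used as a black box. Your proposal is therefore not being compared to anything in the present paper, but rather to the original source.

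That said, your outline accurately reflects the architecture of the Bestvina--Feighn argument: the coarse projection $\pi\colon\X\to\F$ via proper subgraphs, the fact that folding paths project to uniform unparameterized quasigeodesics, and a Masur--Minsky-style hyperbolicity criterion. The loxodromic characterization is also correctly sketched: a fully irreducible $\phi$ has a folding axis on which it translates, yielding loxodromic action via $\pi$, while a reducible $\phi$ has a power fixing a vertex of $\F$, giving a bounded orbit. One minor point worth tightening in the converse direction: you should note that a bounded orbit rules out parabolic as well as loxodromic behavior (which it does, since parabolics have unbounded orbits by definition), so the trichotomy forces ellipticity. As a high-level sketch this is fine, but be aware that the ``definite progress'' step you flag is indeed where essentially all the work lies in \cite{BF14}, and your description of it (illegal turns relative to the Whitehead graph being ``consumed'') is more impressionistic than the actual mechanism, which goes through a careful analysis of how the projection to $\F$ of a nearest point on a folding path behaves as one varies the basepoint.
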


A central tool in the proof of \Cref{th:BFhyp} is the coarse Lipschitz projection $\pi\colon \X \to \F$ from Outer space to the factor complex, which is defined by sending $G \in \X$ to the collection
\[
\pi(G) = \{\pi_1(G') : G' \subset G \text{ is a connected, proper subgraph}\} \subset \F^0.
\]
By \cite[Lemma 3.1]{BF14}, $\mathrm{diam}_\F(\pi(G)) \le 4$. Further, there is an $L\ge 0$, depending only on $\rank(\free)$, such that $\pi\colon \X \to \F$ is coarsely $L$--Lipschitz \cite[Corollary 3.5]{BF14}. Moreover~\cite[Theorem 9.3]{BF14}, if $\gamma\colon[a,b]\to\X$ is a folding path, then $\pi(\gamma([a,b]))$ is within a uniform Hausdorff distance (independent of $\gamma$) from any $\fc$--geodesic from $\pi(\gamma(a))$ to $\pi(\gamma(b))$. 

As a hyperbolic space, $\F$ has a Gromov boundary. Let $\mathcal{AT}$ be the subspace of $\partial \X$ consisting of projective classes of arational trees. For $T,T' \in \mathcal{AT}$, define $T\approx T'$ to mean that $L(T)=L(T')$. Thus $\approx$ is an equivalence relation on  $ \mathcal{AT}$. The following theorem computes the boundary of $\F$ 
and will be needed in \Cref{sec:laminations_agree}.

\begin{theorem}[Bestvina--Reynolds \cite{bestvina2012boundary}, Hamenst\"adt \cite{hamenstadt2013boundary}] \label{th:factor_complex_boundary}
The projection $\pi\colon \X \to \F$ has an extension to a map $\partial \pi\colon \mathcal{AT} \to \partial \F$ which satisfies the following properties: 
\begin{itemize}
\item If $(G_i)_{i\ge0} \subset \X$ is a sequence converging in $\overline{\X}$ to $T\in \mathcal{AT}$, then $\pi(G_i) \to \partial \pi (T)$ in $\F \cup \partial \F$. 
\item If  $(G_i)_{i\ge0} \subset \X$ is a sequence converging in $\overline{\X}$ to $T\in \overline{\X} \setminus \mathcal{AT}$, then the sequence $(\pi(G_i))_{i\ge0}$ remains bounded in $\F$.
\end{itemize}
Moreover, if $T \approx T'$ then $\partial \pi (T) = \partial \pi (T')$, and the induced map $(\mathcal{AT} / \approx) \to \partial F$ is a homeomorphism.
\end{theorem}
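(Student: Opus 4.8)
The plan is to follow Bestvina--Reynolds \cite{bestvina2012boundary} and Hamenst\"adt \cite{hamenstadt2013boundary}, whose two main ingredients are already available: the hyperbolicity of $\F$ (\Cref{th:BFhyp}) together with the facts, recalled above, that $\pi\colon\X\to\F$ is coarsely Lipschitz and sends folding paths to uniform unparameterized quasigeodesics; and the following \emph{dichotomy}, which is the analytic core of the theorem: if $(G_i)\subset\X$ converges in $\overline{\X}$ to $T$, then $\{\pi(G_i)\}$ is unbounded in $\F$ if and only if $T$ is arational. Granting the dichotomy, I would build $\partial\pi$ as follows: fix $G_\ast\in\X$, and for $T\in\mathcal{AT}$ with an approximating sequence $G_i\to T$, take standard geodesics $\gamma_i$ from $G_\ast$ to $G_i$. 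Their $\pi$-images are uniform quasigeodesics based at $\pi(G_\ast)$ of length tending to infinity (by the dichotomy), so by hyperbolicity and stability of quasigeodesics they fellow-travel longer and longer initial segments and converge to a point of $\partial\F$, which I call $\partial\pi(T)$. Stability of quasigeodesics then shows $\partial\pi(T)$ does not depend on $G_\ast$, on $(G_i)$, or on the $\gamma_i$; in particular $\pi(G_i)\to\partial\pi(T)$ for \emph{every} sequence $G_i\to T$, which is the first bullet, while the second bullet is exactly the ``only if'' half of the dichotomy.

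For the dichotomy, the non-arational direction is the more concrete: Reynolds's classification of arational trees \cite{Rey12} produces, for any non-arational $T\in\overline{\X}$, a proper free factor $F$ that ``reduces'' $T$ (via a simplicial edge, a decomposition, or a non-free point stabilizer) and persists under approximation, in the sense that every $G_i$ sufficiently close to $T$ carries a conjugate of $F$ in a proper connected subgraph; this pins $\{\pi(G_i)\}$ to a bounded region of $\F$ near the vertex $[F]$. Conversely, if $T$ is arational but $\{\pi(G_i)\}$ stayed in some $\F$-ball, then a limiting argument using the coarse Lipschitz structure of $\pi$ and hyperbolicity of $\F$ would produce a single proper free factor carried compatibly by cofinitely many $G_i$, and in the limit a proper invariant subtree of $T$ with dense orbits or a simplicial reduction of $T$, contradicting arationality.

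It remains to see that $\partial\pi$ factors through $\approx$ and induces a bijection onto $\partial\F$. Surjectivity is easy: represent $\xi\in\partial\F$ by a quasigeodesic ray, lift it to a folding ray $\gamma$ in $\X$ whose $\pi$-image stays a bounded Hausdorff distance from the ray, and note that by compactness of $\overline{\X}$ the ray $\gamma$ subconverges to some $T\in\overline{\X}$; since $\pi(\gamma(t))\to\xi$ is unbounded, the dichotomy forces $T\in\mathcal{AT}$ and hence $\partial\pi(T)=\xi$ by the first bullet. The assertion that $\partial\pi$ factors through $\approx$, together with injectivity of the induced map, is precisely the equivalence $\partial\pi(T)=\partial\pi(T')\iff L(T)=L(T')$, and I expect this to be the main obstacle. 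The mechanism is that a folding ray $\gamma$ with $\pi(\gamma(t))\to\xi$ ``records'' a lamination -- the lines appearing as subwords of arbitrarily short loops of $\gamma(t)$ as $t\to\infty$ -- and one must show (i) this recorded lamination depends only on $\xi$, not on $\gamma$; and (ii) it equals $L(T)$ whenever $\gamma\to T\in\mathcal{AT}$ in $\overline{\X}$. Part (ii) is close to \Cref{rem:L(T)} once one knows the short loops of $\gamma(t)$ converge projectively, in the space of geodesic currents, to a current dual to $T$. Part (i) is the genuinely delicate step: hyperbolicity gives that two folding rays with the same endpoint in $\partial\F$ fellow-travel in $\F$, and this coarse statement must be upgraded to equality of their recorded laminations, which is where the real work of \cite{bestvina2012boundary,hamenstadt2013boundary} lies.

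Finally, to promote the continuous bijection $\bar\pi\colon\mathcal{AT}/\!\approx\,\to\partial\F$ to a homeomorphism, rather than rely on compactness of $\partial\F$ I would verify continuity of $\bar\pi^{-1}$ by hand: if $\xi_n\to\xi$ in $\partial\F$, pick arational $T_n,T$ with $\partial\pi(T_n)=\xi_n$ and $\partial\pi(T)=\xi$; by compactness of $\overline{\X}$ the sequence $T_n$ subconverges to some $S\in\overline{\X}$, and a diagonal argument through approximating sequences, together with the first bullet, shows $\partial\pi(S)=\lim_n\xi_n=\xi=\partial\pi(T)$, so $S\approx T$ by the injectivity just established, i.e.\ $[T_n]\to[T]$ in $\mathcal{AT}/\!\approx$. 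Continuity of $\bar\pi$ itself is immediate from the first bullet and the definitions of the axes topology on $\overline{\X}$ and the topology on $\partial\F$. Assembling these pieces gives the theorem.
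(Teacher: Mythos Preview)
The paper does not prove this theorem: \Cref{th:factor_complex_boundary} is stated as a black-box result attributed to Bestvina--Reynolds \cite{bestvina2012boundary} and Hamenst\"adt \cite{hamenstadt2013boundary}, with no accompanying argument in the paper itself. So there is no ``paper's own proof'' to compare against; your proposal is instead a sketch of how the proof goes in those cited references, and should be read as such.

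As a sketch of the Bestvina--Reynolds argument your outline is broadly accurate in shape---the dichotomy, the construction of $\partial\pi$ via quasigeodesic images of folding paths, and the identification of the fibers with $\approx$--classes are indeed the main beats. You are also right to flag that the equivalence $\partial\pi(T)=\partial\pi(T')\iff L(T)=L(T')$ is where the substantive work lies, and that your parts (i) and (ii) are genuinely delicate rather than formal; in particular your description of (i) as ``upgrading'' $\F$--fellow-traveling of folding rays to equality of recorded laminations understates how much of \cite{bestvina2012boundary} is devoted to exactly this. One small gap worth noting in your continuity-of-inverse argument: when you pass to a subsequential limit $S\in\overline{\X}$ of the $T_n$, you invoke the first bullet to compute $\partial\pi(S)$, but the first bullet applies to sequences in $\X$, not in $\mathcal{AT}$, and you have not yet established $S\in\mathcal{AT}$. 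You need to run the diagonal argument through approximants $G_i^{(n)}\in\X$ of the $T_n$, observe that the resulting diagonal sequence has unbounded $\pi$-image (since $\xi_n\to\xi$), and then invoke the contrapositive of the second bullet to conclude $S\in\mathcal{AT}$ before applying the first. This is routine to fix but should be said.
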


We also record the following useful statement which follows directly from \cite[Theorem~A]{CHR11}:
\begin{proposition}\label{prop:CHR}
Let $T,T'\in\cvbar$ be free arational trees such that $T\not\approx T'$. Then $L(T)\cap L(T')=\varnothing$. Moreover if $(p,q)\in L(T)$ then there does not exist $q'\in \partial\free$ such that $(p,q')\in L(T')$. 
\end{proposition}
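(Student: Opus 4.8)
The plan is to reduce both assertions to the \emph{minimality} of the dual laminations of $T$ and $T'$, which is what I would extract from \cite[Theorem~A]{CHR11}. As points of $\cvbar$, the trees $T$ and $T'$ carry minimal very small actions of $\free$; since they are in addition free and arational, Reynolds' characterization \cite{Rey12} (recalled above) shows that each is indecomposable and that $\free$ acts on it with dense orbits --- the once-punctured surface case cannot occur here, because the peripheral conjugacy class is elliptic in the dual tree of a lamination on a once-punctured surface, so that action is never free. Theorem~A of \cite{CHR11} then applies to both $T$ and $T'$ and gives that $L(T)$ and $L(T')$ are \emph{minimal} algebraic laminations: neither one admits a proper nonempty sublamination.

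For the first assertion, suppose toward a contradiction that $(p,q)\in L(T)\cap L(T')$. The intersection $L(T)\cap L(T')$ is closed, $\free$--invariant, and invariant under the flip map, hence is itself an algebraic lamination, and it is nonempty by assumption. As a nonempty sublamination of the minimal lamination $L(T)$ it must equal $L(T)$; by the same reasoning it equals $L(T')$. Therefore $L(T)=L(T')$; since $T\approx T'$ is by definition the equality $L(T)=L(T')$, this contradicts the hypothesis $T\not\approx T'$. Hence $L(T)\cap L(T')=\varnothing$.

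For the ``moreover'', I would show that a common leaf endpoint would force a common leaf, which the first assertion rules out. Suppose $(p,q)\in L(T)$ and $(p,q')\in L(T')$; if $q=q'$ this already contradicts the first assertion, so assume $q\ne q'$. In the tree $\ca(\free,X)$ the bi-infinite geodesics representing $(p,q)$ and $(p,q')$ each have an end at $p$, so they share a common infinite sub-ray $r$ pointing toward $p$; in particular, every finite subword of $r$ is a finite subword of both of these geodesics. By the description of dual laminations in \Cref{rem:L(T)}, each finite subword of $r$ is, for every $\epsilon>0$, a subword of some cyclically reduced word $w$ with $\ell_T(w)\le\epsilon$ (since $(p,q)\in L(T)$) and also a subword of some cyclically reduced word $w'$ with $\ell_{T'}(w')\le\epsilon$ (since $(p,q')\in L(T')$). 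Consequently the set $\mathcal W$ of reduced words over $X^{\pm1}$ with this ``simultaneously short'' property is closed under passing to subwords, invariant under the flip, and infinite, since it contains all finite subwords of $r$. A standard compactness argument --- pick $u_n\in\mathcal W$ with $|u_n|\to\infty$, recenter each $u_n$ at its midpoint, and pass to a subsequential limit in the space of bi-infinite words over $X^{\pm1}$ --- then produces a reduced bi-infinite word, all of whose finite subwords lie in $\mathcal W$. This word is the geodesic line between two distinct points $a,b\in\partial\free$, and applying \Cref{rem:L(T)} once for $T$ and once for $T'$ gives $(a,b)\in L(T)\cap L(T')$, contradicting the first assertion.

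The entire weight of the argument rests on the appeal to \cite[Theorem~A]{CHR11}: once the minimality of $L(T)$ and $L(T')$ is available, the first assertion is immediate and the ``moreover'' needs only the soft sub-ray and compactness manipulation above. In particular, beyond indecomposability no dynamical or self-similarity properties of $T$ and $T'$ are used, which is why this statement can serve as a black box later in the paper.
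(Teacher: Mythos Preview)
Your argument has a genuine gap in the first assertion: the dual lamination $L(T)$ of a free arational tree is \emph{not} minimal. What \cite[Theorem~A]{CHR11} actually gives (and what this paper uses elsewhere, e.g.\ in the proof of \Cref{th:laminations_agree} and \Cref{th:lamination_continuity}) is that $L(T)$ has a \emph{unique minimal sublamination} $L'(T)$ and that $L(T)$ is the diagonal closure of $L'(T)$; in general $L(T)\setminus L'(T)$ consists of finitely many $\free$--orbits of isolated ``diagonal'' leaves. So if $(p,q)\in L(T)\cap L(T')$ happens to be a diagonal leaf, you cannot conclude from minimality of $L(T)$ that $L(T)\cap L(T')=L(T)$, because $L(T)$ is not minimal.

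The fix is short: use uniqueness of the minimal sublamination rather than minimality of $L(T)$ itself. If $L(T)\cap L(T')$ is nonempty, it is a nonempty sublamination of $L(T)$, hence contains the unique minimal sublamination $L'(T)$; symmetrically it contains $L'(T')$. Then $L'(T)$ is a minimal sublamination of $L(T')$, so by uniqueness $L'(T)=L'(T')$, and taking diagonal closures gives $L(T)=L(T')$, contradicting $T\not\approx T'$. Your compactness argument for the ``moreover'' is fine and is essentially the manipulation the paper carries out in the proof of \Cref{cor:attr}; the paper itself does not give a proof of \Cref{prop:CHR} but simply cites \cite[Theorem~A]{CHR11} directly.
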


\subsection{The $\mathcal Q$--map and the $\mathcal Q$--index} \label{sec:Q_map}

For a tree $T\in\cvbar$, denote $\hat T\colonequals\overline{T}\cup
\partial T$, where $\overline T$ is the metric completion of $T$ and
$\partial T$ is the hyperbolic boundary of $T$. Note that the
action of $\free$ on $T$ naturally extends to an action of $\free$ on $\hat T$.

For a tree $T\in\cvbar$ with dense $\free$--orbits,
Coulbois, Hilion and Lustig~\cite{CHL2} constructed an $\free$--equivariant
surjective map $\mathcal Q_T\colon \partial \free \to \hat T$. The precise
definition of $\mathcal Q_T$ is not important for our purposes but we
will need the following crucial property of $\mathcal Q_T$:

\begin{proposition}\label{prop:keyQ}\cite[Proposition 8.5]{CHL2}
Let $T\in\cvbar$ be a tree with dense $\free$--orbits. 
Then for distinct points $p,p'\in \partial \free$ we have $\mathcal
Q_T(p)=\mathcal Q_T(p')$ if and only if $(p, p')\in L(T)$.
\end{proposition}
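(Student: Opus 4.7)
The plan is to prove the two implications separately, using the characterization of $L(T)$ given in \Cref{rem:L(T)} together with the $\free$--equivariance of $\mathcal{Q}_T$ and its defining property as a limit of $\free$--orbits taken along geodesic rays toward points of $\partial\free$.

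For the easier direction ($\Leftarrow$), suppose $(p,p')\in L(T)$. By \Cref{rem:L(T)}, for every $\epsilon>0$ and every finite subword $v$ of the bi-infinite geodesic $\ell=[p,p']$ in $\ca(\free,X)$ there is a cyclically reduced word $w$ over $X^{\pm 1}$ with $\ell_T(w)\le\epsilon$ containing $v$ as a subword. A diagonal selection yields cyclically reduced $w_n$ with $\ell_T(w_n)\to 0$ whose subwords $v_n$ of $\ell$ have length tending to infinity. After conjugating each $w_n$ by a suitable $g_n\in\free$ (which preserves translation length) so that $v_n$ is centered near $1$ in $\ca(\free,X)$, the axis of the conjugated $w_n$ in $T$ is a very short invariant segment that lies close to the points $\mathcal{Q}_T(p)$ and $\mathcal{Q}_T(p')$, because the action of $w_n$ on $T$ fellow-travels the geodesic $\ell$ on a long segment in $\ca(\free,X)$. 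Passing to the limit, both $\mathcal{Q}_T(p)$ and $\mathcal{Q}_T(p')$ must lie in the same shrinking set in $\hat T$, forcing equality.

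For the harder direction ($\Rightarrow$), set $P=\mathcal{Q}_T(p)=\mathcal{Q}_T(p')\in\hat T$ and let $y$ be the vertex of $\ell=[p,p']$ in $\ca(\free,X)$ nearest the identity. Denote by $\alpha_n,\beta_n$ the vertices at distance $n$ from $y$ along $\ell$ toward $p$ and $p'$, respectively. Since $\mathcal{Q}_T$ is obtained as a limit along geodesic rays, $\alpha_n\cdot v_0$ and $\beta_n\cdot v_0$ both converge to $P$ in $\hat T$ for any basepoint $v_0\in T$. The element $u_n\colonequals\beta_n\alpha_n^{-1}$ therefore satisfies $d_T(\alpha_n v_0, u_n\alpha_n v_0)=d_T(\alpha_n v_0,\beta_n v_0)\to 0$ (or the appropriate analogue when $P\in\partial T$), so $\ell_T(u_n)\to 0$. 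Since $\alpha_n$ and $\beta_n$ start at $y$ with different outgoing edges, the word $u_n$ is reduced and, after cyclic reduction and possibly a further conjugation, contains the length-$2n$ subword of $\ell$ centered at $y$. By \Cref{rem:L(T)} this witnesses $(p,p')\in L(T)$.

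The principal obstacle lies in the case $P\in\partial T$. Here convergence $\alpha_n v_0\to P$ and $\beta_n v_0\to P$ occurs only in the Gromov (or observers') topology on $\hat T$ and does not a priori imply metric proximity in $T$; consequently the passage from ``$\mathcal{Q}_T$--images agree'' to ``$\ell_T(u_n)\to 0$'' requires genuine care. The resolution uses the hypothesis that $\free$ acts on $T$ with dense orbits, which, together with compactness of $\hat T$ in the observers' topology, provides enough approximation by $\free$--orbits to control translation lengths of the return elements $u_n$. Making this approximation effective is the technical heart of the argument in \cite{CHL2}, and it is exactly where density of orbits is essential.
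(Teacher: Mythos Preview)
The paper does not prove this proposition; it is quoted verbatim from \cite[Proposition~8.5]{CHL2} and no argument is given. Indeed, the surrounding text explicitly says ``The precise definition of $\mathcal Q_T$ is not important for our purposes,'' so the paper treats the entire statement as a black box imported from Coulbois--Hilion--Lustig. There is therefore no ``paper's own proof'' to compare your proposal against.

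As for your sketch on its own merits: it is not a proof but an outline, and you acknowledge as much. Both directions rest on an assumed description of $\mathcal Q_T$ (``obtained as a limit along geodesic rays, $\alpha_n\cdot v_0\to P$'') that is neither stated in this paper nor quite the definition in \cite{CHL2}; the actual construction there goes through the map $\mathcal Q^2$ on the double boundary and the length--measure pairing, not through orbit limits of a basepoint. Your $(\Rightarrow)$ argument in particular hinges on deducing $\ell_T(\beta_n\alpha_n^{-1})\to 0$ from $\alpha_n v_0,\beta_n v_0\to P$, and you correctly flag that when $P\in\partial T$ this step is the crux and is not justified by what you have written. So the proposal identifies the right shape of the difficulty but does not resolve it; a reader wanting a proof must still go to \cite{CHL2}.
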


For a tree $T\in\cvbar$ we say that a freely reduced word $v$ over
$X^{\pm 1}$ is an \emph{$X$--leaf segment} for $L(T)$ (or just a
\emph{leaf segment} for $L(T)$) if there exists
$(p,p')\in L(T)$ such that $v$ labels a finite subpath of the
bi-infinite geodesic from $p$ to $p'$ in $\ca(\free,X)$.

\begin{definition}\label{defn:proximal}
A point $p\in \partial \free$ is said to be \emph{proximal} for $L(T)$
if for every $v$ such that $v$ occurs infinitely often as a subword of
the geodesic ray from $1$ to $p$ in $\ca(\free,X)$, the word $v$ is a
leaf segment for $L(T)$.   
\end{definition}

\begin{proposition}\label{prop:prox}
The following hold:
\begin{enumerate}
\item For $T\in\cvbar$ the definition of a proximal points for $L(T)$ does not depend on the free basis $X$.

\item If $T,T'\in \cvbar$ are free arational trees such that there exists a point $p\in\partial \free$ that is proximal for both $L(T)$ and $L(T')$, then $L(T)=L(T')$.
\end{enumerate}
\end{proposition}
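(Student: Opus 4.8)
The plan is to prove both parts using the characterization of $L(T)$ from \Cref{rem:L(T)} together with the $\mathcal{Q}$--map technology from \Cref{prop:keyQ}. For part (1), the strategy is to recall that if $X'$ is another free basis of $\free$, then there is a bi-Lipschitz equivariant quasi-isometry $\ca(\free,X)\to\ca(\free,X')$, and more concretely each generator of $X$ is a bounded-length word in $X'$ and vice versa. The notion of ``$v$ occurs infinitely often as a subword of the geodesic ray from $1$ to $p$'' can be rephrased basis-independently: it says that every finite subpath of the $X$--geodesic ray to $p$ that occurs infinitely often can be ``translated'' into a finite subpath of the $X'$--geodesic ray to $p$ that also occurs infinitely often, because the two rays fellow-travel and the translation of subwords is length-controlled. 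Likewise, \Cref{rem:L(T)} shows that the property of $v$ being a leaf segment for $L(T)$ — i.e., $v$ embeds in arbitrarily $T$--short cyclically reduced words — is visibly independent of the basis once one translates between $X$ and $X'$ using the same bounded-distortion dictionary. First I would set up this dictionary carefully (generators as bounded words, fellow-traveling of geodesic rays), then check that both ``occurs infinitely often in the ray to $p$'' and ``is a leaf segment for $L(T)$'' are preserved under it; proximality for $L(T)$ with respect to $X$ then matches proximality with respect to $X'$.

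For part (2), suppose $p$ is proximal for both $L(T)$ and $L(T')$, where $T,T'$ are free arational trees, and suppose for contradiction that $L(T)\ne L(T')$; equivalently $T\not\approx T'$. Then \Cref{prop:CHR} applies: not only is $L(T)\cap L(T')=\varnothing$, but in fact there is no point $q'$ with $(p,q')\in L(T')$ once some $(p,q)\in L(T)$ — and symmetrically. So the idea is to produce, from proximality, a leaf of $L(T)$ emanating from $p$, which will then be forbidden from also being (an endpoint of) a leaf of $L(T')$, contradicting the symmetric conclusion. Concretely, since $p$ is proximal for $L(T)$, consider the nested sequence of prefixes $v_1\sqsubset v_2\sqsubset\cdots$ of the geodesic ray from $1$ to $p$; each $v_n$ that occurs infinitely often is a leaf segment for $L(T)$, so there are leaves $(p_n,q_n)\in L(T)$ whose bi-infinite geodesics contain $v_n$ as a subword. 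By a compactness argument in $\DB$ (and using that $L(T)$ is closed and $\free$--invariant, so we may translate to put $v_n$ at a controlled location), one extracts a leaf $(p,q)\in L(T)$ with forward endpoint $p$. By the same argument applied to $L(T')$, one gets a leaf $(p,q')\in L(T')$. But \Cref{prop:CHR} says that $(p,q)\in L(T)$ forbids the existence of any $q'$ with $(p,q')\in L(T')$ — contradiction. Hence $L(T)=L(T')$.

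The main obstacle I anticipate is the compactness extraction in part (2): one must argue that ``$v_n$ is a leaf segment for $L(T)$'' can be upgraded to ``there is an actual leaf of $L(T)$ with forward endpoint exactly $p$.'' The subtlety is that a priori the leaves $(p_n,q_n)$ merely contain $v_n$ somewhere along them, not necessarily as an initial segment of the ray to $p$; one needs the additional input that the $v_n$ occur infinitely often (not just once) so that, after translating by suitable elements of $\free$ and using $\free$--invariance and closedness of $L(T)$, the extracted limit leaf genuinely has $p$ as an endpoint. Handling this bookkeeping — choosing the translations so that longer and longer prefixes of the ray to $p$ appear along the leaves, then passing to a subsequential limit in the compact space $\overline{\DB}$ and checking the limit lies in $\DB$ (i.e., the two endpoints are distinct, which uses arationality/freeness of $T$ to rule out degenerate leaves) — is where the real care is needed. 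Part (1) should be routine once the bounded-distortion dictionary between bases is in place.
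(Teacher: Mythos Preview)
Your approach to part~(1) matches the paper's: both reduce the basis-independence to the quasi-isometry between Cayley graphs for different bases, and the paper likewise leaves the routine details to the reader.

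For part~(2), however, there is a genuine gap. Proximality of $p$ for $L(T)$ does \emph{not} give you a leaf $(p,q)\in L(T)$ with endpoint $p$. The definition only constrains words that occur \emph{infinitely often} as subwords of the ray to $p$, and prefixes of this ray typically do not occur infinitely often (consider, e.g., a ray whose first letter never recurs). So your nested sequence of prefixes $v_1\sqsubset v_2\sqsubset\cdots$ need not consist of leaf segments at all, and your translation bookkeeping cannot repair this: translating a leaf containing some $v_n$ to line up with an occurrence of $v_n$ inside the ray does not force the limit leaf to have $p$ as an endpoint. Indeed, a point $p$ with $\mathcal Q_T(p)\in\partial T$ can be proximal (nothing in the definition rules this out) yet by \Cref{prop:Qmap}(2) satisfies $\#\mathcal Q_T^{-1}(\mathcal Q_T(p))=1$, so no leaf of $L(T)$ ends at $p$.

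The paper's argument sidesteps this entirely. By pigeonhole, for each $n$ there is \emph{some} word of length $n$ occurring infinitely often in the ray to $p$; proximality for both laminations makes this word a leaf segment for $L(T)$ \emph{and} for $L(T')$ simultaneously. A standard compactness extraction then produces a common leaf $(p_1,q_1)\in L(T)\cap L(T')$ (with no control on $p_1,q_1$ relative to $p$), and the \emph{first} conclusion of \Cref{prop:CHR}, namely $L(T)\cap L(T')=\varnothing$, already forces $L(T)=L(T')$. You were reaching for the stronger second clause of \Cref{prop:CHR}, which is unnecessary here and is precisely what led you to need a leaf at $p$.
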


\begin{proof}
Part (1) easily follows from the fact that for any two free bases $X_1,X_2$ of $\free$, the identity map $\free\to\free$ extends to a quasi-isometry $\ca(\free,X_1)\to\ca(\free,X_2)$. We leave the details to the reader.

For part (2), suppose that $T,T'\in \cvbar$ are free arational trees such that there exists $p\in\partial \free$ which is proximal for both $L(T)$ and $L(T')$. Therefore for every $n\ge 1$ there exists a freely reduced word over $X^{\pm 1}$ of length $n$ which is a leaf-segment for both $L(T)$ and $L(T')$. By a standard compactness argument it then follows that there exists a point $(p_1,q_1)\in L(T)\cap L(T')$. Therefore by \Cref{prop:CHR} we have $L(T)=L(T')$.
\end{proof}

We will need the following known results about the map $\mathcal Q_T$:

\begin{proposition}\label{prop:Qmap}
Let $T\in\cvbar$ be a free $\free$--tree with dense $\free$--orbits. 
Then the following hold:

\begin{enumerate}
\item[(1)]\cite[Proposition~5.8]{CHL2} If $p\in
\partial\free$ is such that $\mathcal Q_T(p)\in\overline{T}$, then $p$
is proximal for $L(T)$.

\item[(2)] \cite[Proposition~5.2]{CoulboisHilion-index} For every $x\in \partial T$ we have $\#(\mathcal Q_T^{-1}(x))=1$.

\item[(3)] For every $x\in \hat T$ we have $1\le \#(\mathcal
  Q_T^{-1}(x))<\infty$.

\item[(4)] There are only finitely many $\free$--orbits of points $x\in \hat
  T$ with $\#(\mathcal Q_T^{-1}(x))\ge 3$.
\end{enumerate}
\end{proposition}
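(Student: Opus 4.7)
My plan is to reduce parts (3) and (4) to the $\mathcal{Q}$--index inequality of Coulbois--Hilion~\cite{CoulboisHilion-index}, since parts (1) and (2) are simply direct citations to the existing literature. The first step is to recall, for each point $x\in\hat T$, the $\mathcal{Q}$--index
\[
\ind(x)\colonequals\max\{0,\;\#\mathcal Q_T^{-1}(x) - 2\}.
\]
The key input will be the main theorem of~\cite{CoulboisHilion-index}, which applies in the present setting of a free tree $T\in\cvbar$ with dense $\free$--orbits and yields the uniform bound
\[
\sum_{[x]\in \hat T/\free}\ind(x)\;\le\;2\rank(\free)-2,
\]
where the sum ranges over $\free$--orbits of points in $\hat T$.

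With this inequality in hand, both remaining items fall out as essentially formal consequences. For (4), finiteness of the sum forces only finitely many $\free$--orbits to contribute $\ind(x)\ge 1$, and by definition this is equivalent to $\#\mathcal Q_T^{-1}(x)\ge 3$. For (3), surjectivity of $\mathcal Q_T$ (recalled in the discussion preceding \Cref{prop:keyQ}) immediately provides the lower bound $\#\mathcal Q_T^{-1}(x)\ge 1$, and I would split finiteness of each individual fiber into two cases based on whether $x\in\partial T$ or $x\in\overline T$: the former case is handled directly by part (2), giving the stronger conclusion $\#\mathcal Q_T^{-1}(x)=1$; in the latter case, the bound on each individual summand yields $\ind(x)\le 2\rank(\free)-2$, hence $\#\mathcal Q_T^{-1}(x)\le 2\rank(\free)$.

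All of the analytic substance was already carried out by Coulbois and Hilion, so no genuine obstacle is anticipated; the role of this proof is simply to repackage their index theorem in precisely the form that will be needed downstream, most notably for the uniform degree bound on Cannon--Thurston fibers.
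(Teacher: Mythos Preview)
The paper does not supply its own proof of this proposition: parts (1) and (2) carry explicit citations, and parts (3) and (4) are stated without proof as further known consequences of the Coulbois--Hilion theory. Your derivation of (3) and (4) from the $\mathcal Q$--index inequality of \cite{CoulboisHilion-index} is correct and is precisely the intended route---indeed, the paper places \Cref{th:index-bound} immediately after the definition of $\ind(T)$ for exactly this reason, and later invokes the same bound in the proof of \Cref{thm:main}. The only small point worth noting is that \Cref{th:index-bound} as stated in the paper carries the hypothesis $\rank(\free)\ge 3$, whereas the ambient Section~3 allows $\rank(\free)\ge 2$; this is harmless since every application in the paper (via \Cref{conv:main}) is in rank at least $3$, and in any case the index bound in \cite{CoulboisHilion-index} holds for rank $2$ as well.
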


Associated to the map $\mathcal Q_T$ there is a notion of the $\mathcal
Q$--index of $T$, developed in~\cite{CoulboisHilion-index}.
We will only need the definition and properties of the $\mathcal
Q$--index for the case where $T\in\cvbar$ is a free $\free$--tree with
dense orbits, and
so we restrict our consideration to that context.

\begin{definition}[$\mathcal Q$--index]
Let $T\in\cvbar$ be a free $\free$--tree with dense $\free$--orbits. The \emph{$\mathcal Q$--index} of a point $x\in \hat T$ is defined to be $\ind(x):=\max\{0, -2+\#(\mathcal Q_T^{-1}(x))\}$.
The \emph{$\mathcal Q$--index} of the tree $T$ is then defined as
\[
\ind(T):=\sum \ind(x),
\]
where the summation is taken over the set of representatives of
$\free$--orbits of points $x$ of $\hat T$ with  $\#(\mathcal Q_T^{-1}(x))\ge 3$.
\end{definition}

The main result of \cite{CoulboisHilion-index} is the following:

\begin{theorem}\cite[Theorem 5.3]{CoulboisHilion-index}\label{th:index-bound}
Let $\free$ be a finite-rank free group with $\rank(\free)\ge 3$. Then every free
$\free$--tree $T\in\cvbar$ with dense $\free$--orbits satisfies
\[
{\rm ind}_\mathcal Q(T)\le 2\rank(\free)-2.
\]
\end{theorem}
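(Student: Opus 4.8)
The plan is to compare the $\mathcal Q$--index of $T$ with the Gaboriau--Levitt \emph{geometric index} $\gind(T)$ of the $\R$--tree $T$, and then to invoke the fact that $\gind(T)\le 2\rank(\free)-2$ for every very small minimal action of $\free$ on an $\R$--tree. Recall that, since $T$ is free with dense orbits, every point stabilizer is trivial, so the geometric index of a point $x\in\hat T$ is $\gind(x)=\max\{0,\val_T(x)-2\}$, where $\val_T(x)$ denotes the number of directions (germs of nondegenerate nontrivial segments) at $x$ in $\hat T$, and $\gind(T)=\sum_{[x]}\gind(x)$ with the sum over $\free$--orbits of points of $\hat T$. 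The desired bound will follow from pointwise inequalities $\ind(x)\le\gind(x)$, summed over orbits, together with the geometric index bound.

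First I would dispose of the boundary of $T$: by part~(2) of \Cref{prop:Qmap}, every $x\in\partial T$ has $\#(\mathcal Q_T\inv(x))=1$, hence $\ind(x)=0=\gind(x)$, so only points of the metric completion $\overline T$ can contribute to either index. Fix $x\in\overline T$ and consider its fiber $F=\mathcal Q_T\inv(x)\subseteq\partial\free$. By \Cref{prop:keyQ}, \emph{any} two distinct points of $F$ form a leaf of $L(T)$; the key claim is then that the natural map from $F$ to the set of directions at $x$, sending $p\in F$ to the germ at $x$ of the path pointing from $x$ toward $p$ along the geodesic realizing a leaf through $p$, is injective. Granting this, $\#(F)\le\val_T(x)$, so $\ind(x)=\max\{0,\#(F)-2\}\le\max\{0,\val_T(x)-2\}=\gind(x)$, and summing over $\free$--orbits of points of $\hat T$ gives $\ind(T)=\sum_{[x]}\ind(x)\le\sum_{[x]}\gind(x)=\gind(T)$.

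It then remains to recall that $\gind(T)\le 2\rank(\free)-2$. For \emph{geometric} trees this is the Gaboriau--Levitt index inequality; for a general $T\in\cvbar$ with dense orbits one approximates $T$ in $\cvbar$ by geometric trees in such a way that the geometric index does not drop in the limit, yielding the bound for $T$ as well. Combining, we obtain $\ind(T)\le\gind(T)\le 2\rank(\free)-2$, as claimed.

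The main obstacle is the injectivity claim in the second paragraph, i.e.\ the local analysis of how the fibers of $\mathcal Q_T$ are distributed among the directions at a point $x\in\overline T$---in particular, ruling out two points of $\partial\free$ that both map to $x$ yet ``approach $x$ from the same side.'' This cannot be read off from the formal properties of $\mathcal Q_T$ recalled above and instead requires the concrete construction of $\mathcal Q_T$, of the dual lamination $L(T)$, and of the structure of $T$ near $\overline T$ via the Rips machine / compact heart technology of \cite{CHL2}: one shows there that a geodesic realizing a leaf of $L(T)$ through $x$ cannot backtrack at $x$, so its two ends lie in distinct directions, and more generally that all of $F$ lies in pairwise distinct directions since, by \Cref{prop:keyQ}, every pair from $F$ is a leaf. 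A secondary technical point is the passage, in the third paragraph, from geometric trees to arbitrary dense-orbit trees in $\cvbar$.
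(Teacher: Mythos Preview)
The paper does not prove this statement at all: \Cref{th:index-bound} is simply quoted as \cite[Theorem~5.3]{CoulboisHilion-index} and then used as a black box in the proof of \Cref{thm:main}. So there is nothing in the paper to compare your argument against.

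That said, your outline does not match the proof in \cite{CoulboisHilion-index}. Coulbois and Hilion do not establish the bound by a direct pointwise comparison $\ind(x)\le\gind(x)$ followed by an appeal to Gaboriau--Levitt. Instead they develop a ``Rips induction'' on the compact heart of $T$ (a system of partial isometries on a finite tree), track how the $\mathcal Q$--index behaves along the induction, and deduce the bound from that machinery. The comparison with the geometric index that you sketch is a consequence of their analysis rather than the mechanism of the proof.

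Your sketch also has a genuine gap beyond the one you flag. The map you propose from $F=\mathcal Q_T^{-1}(x)$ to directions at $x$---``the germ at $x$ of the path pointing from $x$ toward $p$ along the geodesic realizing a leaf through $p$''---is not well-defined as written: a leaf $(p,q)\in L(T)$ is a pair in $\partial^2\free$, and the corresponding bi-infinite geodesic in the Cayley graph maps to a \emph{bounded} subset of $T$, not to a geodesic through $x$ with a well-defined germ. One can instead try to use the direction from which the image of a ray $[1,p)$ approaches $x$, but making this precise and proving injectivity is exactly the hard content, and it interacts delicately with whether $x$ lies in $T$ or in $\overline T\setminus T$ (points of the latter type have metric valence $1$, yet can have $\#\mathcal Q_T^{-1}(x)\ge 2$ for trees of Levitt type). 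So the inequality $\#\mathcal Q_T^{-1}(x)\le\val_T(x)$ you want is not available without substantial further argument---essentially the argument carried out in \cite{CoulboisHilion-index}.
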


\subsection{Stable trees of fully irreducibles}\label{sect:fully-irred}

For any fully irreducible $\phi\in \Out(\free)$ there is an associated
\emph{stable tree} $T_\phi\in\cvbar$ with the property that $\phi
T_\phi=\lambda T_\phi$ for some $\lambda>1$. The tree
$T_\phi\in\cvbar$ is uniquely determined by $\phi$, up to multiplying
the metric by a positive scalar, and the projective class
$[T_\phi]\in\osbar$ is the unique attracting fixed point for the left
action of $\phi$ on $\osbar$. The tree $T_\phi$ may be explicitly
constructed from a train-track representative $f\colon G\to G$ of $\phi^{-1}$, and the
``eigenvalue'' $\lambda$ in the equation $\phi
T_\phi=\lambda T_\phi$ is the Perron-Frobenius eigenvalue of the
transition matrix of $f$. 

For any fully irreducible $\phi\in \Out(\free)$ the tree
$T_\phi$ has dense $\free$--orbits and is arational~\cite{Rey12,CoulboisHilion-botany}; if in addition $\phi$ is atoroidal
then the action of $\free$ on $T_\phi$ is free.

Suppose now that $\phi\in \Out(\free)$ is an atoroidal fully
irreducible element, so that $\phi
T_\phi=\lambda T_\phi$ for some $\lambda>1$. Then for every
representative $\Phi\in \Aut(\free)$ of the outer automorphism class
$\phi$ the trees $\Phi T_\phi$ and $\lambda T_\phi$ are
$\free$--equivariantly isometric. The metric completions
$\overline{\Phi T_\phi}$ and $\lambda \overline{T_\phi}$ are thus 
$\free$--equivariantly isometric as well.

Using the definition of $\overline{\Phi T_\phi}$ as an $\free$--tree, it follows that there exists a bijective $\lambda$--homothety $H_\Phi\colon \overline{T_\phi} \to \overline{T_\phi}$ which
\emph{represents} $\Phi$ in the sense that for every $x\in
\overline{T_\phi}$ and every $w\in\free$ we have
\begin{equation}
\label{eqn:homothety-action}
H_\Phi(wx)=\Phi^{-1}(w)H_\Phi(x).
\end{equation}
Moreover, there is a unique point $C(H_\Phi)\in  \overline{T_\phi}$
which is fixed by $H_\Phi$; this point is called the \emph{center} of
$H_\Phi$.

It is known that for every representative $\Phi\in \Aut(\free)$ of
$\phi$ there exists a unique homothety $H_\Phi$ representing $\Phi$ in
the above sense. Moreover, it is also known that the set of
homotheties representing all representatives of $\phi$ in
$\Aut(\free)$ is exactly the set
\[
\{w H_{\Phi_0}| w\in\free\}
\]
where $\Phi_0$ is some representatives of $\phi$ in
$\Aut(\free)$. We refer the reader to ~\cite{KapLus-stabil} for details.

We will need a number of  known results relating homotheties $H_\Phi$ to
the map $\mathcal Q_{T_\phi}$ which are summarized in \Cref{prop:att} below.
Before stating this proposition recall that there is a notion of a
\emph{forward rotationless}, or FR, element of $\Out(\free)$ which
allows one to disregard certain periodicity and permutational
phenomena that otherwise complicate the index theory for
$\Out(\free)$. The notion of an FR element of $\Out(\free)$ was first
introduced by Feighn and Handel~\cite{FH11}. We refer the reader to
Definition~3.2 in \cite{CoulboisHilion-botany} for a precise
definition. For our purposes we only need to know that for every fully
irreducible $\phi\in \Out(\free)$ there exists $k\ge 1$ such that
$\phi^k$ is FR \cite[Proposition~3.3]{CoulboisHilion-botany}. Note that in this case $T_{\phi}=T_{\phi^k}$, $L(T_{\phi})=L(T_{\phi^k})$ and
$\mathcal Q_{T_\phi}=\mathcal Q_{T_{\phi^k}}$. Also if $\phi\in\Out(\free)$ is an FR element then $\phi^m$ is also FR for every $m\ge 1$.

\begin{proposition}\label{prop:att}
Let $\phi\in \Out(\free)$ be an atoroidal fully
irreducible FR element. 
\begin{enumerate}
\item[(1)]\cite[Proposition~3.1]{CoulboisHilion-botany} For every representative $\Phi\in \Aut(\free)$ of
$\phi$, the left action of $\Phi$ on $\partial \free$ has finitely
many fixed points, each of which is either a local attractor or a
local repeller. Moreover, the action of $\Phi$ on $\partial \free$ has
at least one fixed point which is a local attractor, and at least one
fixed point which is a local repeller.

\item[(2)]\cite[Lemma 4.3]{CoulboisHilion-botany} Let $\Phi\in \Aut(\free)$ be a representative of $\phi$, and denote by $att(\Phi)$ the set of all fixed points of $\Phi$ in $\partial \free$ that  are  local attractors. Let $H_\Phi$ be the homothety of $T_\phi$ representing $\Phi$.
Then
\[
\mathcal Q_{T_\phi}(att(\Phi))=C(H_\Phi)\qquad\text{and}\qquad \mathcal Q_{T_\phi}^{-1}(C(H_\Phi))=att(\Phi).
\]
\end{enumerate}

\end{proposition}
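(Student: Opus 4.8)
The plan is to deduce both statements from the train-track theory for fully irreducible elements together with the construction of the map $\mathcal Q_{T_\phi}$ in \cite{CHL2}; both are ultimately due to Coulbois--Hilion, so in the write-up I would cite \cite[Proposition~3.1]{CoulboisHilion-botany} for part~(1) and \cite[Lemma~4.3]{CoulboisHilion-botany} for part~(2), with the discussion below serving only as a guide. Throughout, write $\hat\Phi\colon\partial\free\to\partial\free$ for the homeomorphism induced by the left action of $\Phi$. The basic observation is that, $\phi$ being atoroidal, every representative $\Phi$ has trivial fixed subgroup in $\free$ --- a nontrivial $w$ with $\Phi(w)=w$ would make $[w]$ a $\phi$-periodic conjugacy class --- so the fixed set of $\hat\Phi$ on $\partial\free$ has no ``endpoints of a fixed subgroup'' and consists of finitely many isolated singular points (a classical consequence of the index theory for free group automorphisms). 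For part~(1), I would extract an attracting fixed point from a train-track representative $f\colon G\to G$ of $\phi$ --- an $f$-fixed direction germinates to an eigenray whose endpoint in $\partial\free$ is a fixed point at which $\hat\Phi$ (for a suitable representative $\Phi$) is locally attracting --- and a repelling one by running the same construction on $\phi^{-1}$; that every representative has at least one of each then follows from the index accounting of \cite{CoulboisHilion-botany}. The role of the FR hypothesis is precisely to make the attractor/repeller dichotomy exhaustive, ruling out the periodic-but-not-fixed turns and directions that would otherwise create fixed boundary points with mixed local dynamics; for that I would invoke \cite{FH11, CoulboisHilion-botany} rather than reprove it.

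For part~(2), the heart of the matter is the compatibility of $\mathcal Q_{T_\phi}$ with the homothety $H_\Phi$. Combining $\free$-naturality of the $\mathcal Q$-map, the relation $H_\Phi(wx)=\Phi^{-1}(w)H_\Phi(x)$, and the equation $\phi T_\phi=\lambda T_\phi$ (which makes $H_\Phi$ realize an $\free$-equivariant isometry $T_\phi\Phi\to\lambda^{-1}T_\phi$), one obtains on all of $\partial\free$ the identity $\mathcal Q_{T_\phi}\circ\hat\Phi=H_\Phi^{-1}\circ\mathcal Q_{T_\phi}$, in which $H_\Phi^{-1}$ is a $\lambda^{-1}$-homothety of $\overline{T_\phi}$ with center $C(H_\Phi)$ and $\lambda>1$. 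Given $\xi\in att(\Phi)$, I would take a neighbourhood $U\ni\xi$ with $\hat\Phi(U)\subseteq U$ and $\bigcap_{n\ge0}\hat\Phi^n(U)=\{\xi\}$; since the fiber $\mathcal Q_{T_\phi}^{-1}(C(H_\Phi))$ is finite (\Cref{prop:Qmap}(3)) and $\xi$ is isolated in it, after shrinking $U$ we may assume $\mathcal Q_{T_\phi}(p)\ne C(H_\Phi)$ for all $p\in U\setminus\{\xi\}$. For such $p$ we have $\hat\Phi^n(p)\to\xi$, so continuity of $\mathcal Q_{T_\phi}$ and the identity above give $H_\Phi^{-n}(\mathcal Q_{T_\phi}(p))\to\mathcal Q_{T_\phi}(\xi)$; but $H_\Phi^{-n}$ contracts $\overline{T_\phi}$ toward $C(H_\Phi)$ by the factor $\lambda^{-n}\to0$, so the limit is $C(H_\Phi)$, whence $\mathcal Q_{T_\phi}(\xi)=C(H_\Phi)$ and $att(\Phi)\subseteq\mathcal Q_{T_\phi}^{-1}(C(H_\Phi))$. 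For the reverse inclusion, if $\mathcal Q_{T_\phi}(p)=C(H_\Phi)$ then $\mathcal Q_{T_\phi}(\hat\Phi(p))=H_\Phi^{-1}(C(H_\Phi))=C(H_\Phi)$, so $\hat\Phi$ permutes the finite fiber $\mathcal Q_{T_\phi}^{-1}(C(H_\Phi))$; passing to a power of $\phi$ (still an atoroidal fully irreducible FR element) and applying part~(1), each fiber point is a local attractor or a local repeller. Repeating the argument with $\Phi^{-1}$, whose action on $\partial\free$ intertwines via $\mathcal Q_{T_\phi}\circ\hat\Phi^{-1}=H_\Phi\circ\mathcal Q_{T_\phi}$ with the \emph{expanding} homothety $H_\Phi$, forces the $\mathcal Q_{T_\phi}$-image of a local repeller into $\partial T_\phi$, which is incompatible with $\mathcal Q_{T_\phi}(p)=C(H_\Phi)\in\overline{T_\phi}$. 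Hence every fiber point is a local attractor, giving $\mathcal Q_{T_\phi}^{-1}(C(H_\Phi))=att(\Phi)$ and, with part~(1), both equalities.

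The hard part will be the dichotomy used twice above: that $\mathcal Q_{T_\phi}$ carries local attractors of $\hat\Phi$ into the completion $\overline{T_\phi}$ and local repellers into the boundary $\partial T_\phi$. Soft dynamics --- a continuous map semiconjugating $\hat\Phi$ to the contracting homothety $H_\Phi^{-1}$ ought to send attractors to attractors --- makes this morally clear, but a rigorous argument must exclude $\mathcal Q_{T_\phi}$ collapsing a punctured neighbourhood of a fixed point injectively into $\partial T_\phi$, a configuration not ruled out by continuity and \Cref{prop:Qmap} alone. Settling this needs the finer local picture of $\mathcal Q_{T_\phi}$ near $C(H_\Phi)$: that the link of $C(H_\Phi)$ in $\overline{T_\phi}$ records the gate structure of the train-track map, that $\mathcal Q_{T_\phi}$ is injective off finitely many $\free$-orbits (\Cref{prop:Qmap}(2),(4)), and that $H_\Phi$ permutes the directions at $C(H_\Phi)$ compatibly with the train-track derivative. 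This is exactly the technical content of \cite{CoulboisHilion-botany}, which is why I would ultimately cite it rather than reprove these facts.
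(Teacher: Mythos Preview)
The paper does not prove this proposition at all: the citations to \cite[Proposition~3.1]{CoulboisHilion-botany} and \cite[Lemma~4.3]{CoulboisHilion-botany} appear in the statement itself, and the proposition is simply quoted from the literature with no accompanying proof. Your plan to cite these same results is therefore exactly what the paper does; the explanatory sketch you provide is additional content that the authors chose to omit.

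Since you asked for feedback on the sketch itself, one point deserves comment. Your argument for $att(\Phi)\subseteq\mathcal Q_{T_\phi}^{-1}(C(H_\Phi))$ uses that $H_\Phi^{-n}(\mathcal Q_{T_\phi}(p))\to C(H_\Phi)$ for $p$ near the attractor. This is fine when $\mathcal Q_{T_\phi}(p)\in\overline{T_\phi}$, but if $\mathcal Q_{T_\phi}(p)\in\partial T_\phi$ the contraction statement need not hold --- the extension of a homothety to the Gromov boundary of a tree does not simply contract toward the center. You do flag a closely related issue as ``the hard part,'' and you are right that this is where the soft dynamical picture breaks down and one genuinely needs the finer structure established in \cite{CoulboisHilion-botany} (specifically the identification of directions at $C(H_\Phi)$ with gates of a train-track representative, and the resulting control over which boundary points $\mathcal Q_{T_\phi}$ sends into $\overline{T_\phi}$ versus $\partial T_\phi$). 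Similarly, your reverse-inclusion argument that repellers land in $\partial T_\phi$ is heuristically correct but needs the same structural input to be made rigorous. Your instinct to cite rather than reprove is sound.
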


\begin{corollary}\label{cor:attr}
Let $\phi\in\Out(\free)$ be an atoroidal fully irreducible and let
 $\Phi\in\Aut(\free)$ be a representative of $\phi$.  Let $p\in
att(\Phi)$. Then:
\begin{enumerate}
\item The point $p$ is proximal for $L(T_\phi)$.
\item If $T\in \cvbar$ is a free arational tree such that $L(T)\ne
  L(T_\phi)$ then there does not exist $p'\in \partial \free$ such
  that $(p',p)\in L(T)$.
\end{enumerate}
\end{corollary}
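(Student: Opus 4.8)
The plan is to reduce both parts to the forward rotationless setting and then read off everything from \Cref{prop:att}, \Cref{prop:Qmap}, and \Cref{prop:prox}. Since $\phi$ is fully irreducible, there is $k\ge 1$ with $\phi^k$ forward rotationless, and then $T_{\phi^k}=T_\phi$, $L(T_{\phi^k})=L(T_\phi)$, and $\mathcal Q_{T_{\phi^k}}=\mathcal Q_{T_\phi}$; moreover $\phi^k$ is again atoroidal and fully irreducible. If $\Phi$ represents $\phi$ and $p\in att(\Phi)$, then $\Phi^k$ represents $\phi^k$ and $p\in att(\Phi^k)$, because a neighbourhood of $p$ that the forward iterates of $\Phi$ contract onto $p$ is also contracted onto $p$ by the forward iterates of $\Phi^k$ (these form a cofinal subsequence). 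Thus it suffices to treat the case where $\phi$ itself is forward rotationless, which I will assume henceforth.

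For part (1), I would let $H_\Phi$ be the homothety of $T_\phi$ representing $\Phi$ and $C(H_\Phi)\in\overline{T_\phi}$ its center. By \Cref{prop:att}(2) we have $\mathcal Q_{T_\phi}(att(\Phi))=C(H_\Phi)$, so $\mathcal Q_{T_\phi}(p)=C(H_\Phi)\in\overline{T_\phi}$. Since $T_\phi$ is a free $\free$--tree with dense orbits, \Cref{prop:Qmap}(1) then immediately gives that $p$ is proximal for $L(T_\phi)$.

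For part (2), let $T\in\cvbar$ be free arational with $L(T)\ne L(T_\phi)$; being free and arational, $T$ has dense $\free$--orbits, so $\mathcal Q_T\colon\partial\free\to\hat T$ is defined. Suppose for contradiction that $(p',p)\in L(T)$ for some $p'\in\partial\free$. By flip-invariance $(p,p')\in L(T)$ with $p'\ne p$, so \Cref{prop:keyQ} gives $\mathcal Q_T(p)=\mathcal Q_T(p')$ and hence $\#\mathcal Q_T^{-1}(\mathcal Q_T(p))\ge 2$. Now I would split on where $\mathcal Q_T(p)$ lies in $\hat T=\overline T\cup\partial T$: if $\mathcal Q_T(p)\in\partial T$, then \Cref{prop:Qmap}(2) forces the fiber to be a singleton, a contradiction; if $\mathcal Q_T(p)\in\overline T$, then \Cref{prop:Qmap}(1) shows $p$ is proximal for $L(T)$, and combining this with part (1) (proximality for $L(T_\phi)$), \Cref{prop:prox}(2) yields $L(T)=L(T_\phi)$, again a contradiction. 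Either way no such $p'$ exists, which is exactly the assertion of part (2).

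None of these steps is deep given the machinery already assembled. The only genuinely substantive move — and the thing I would want to get right — is the dichotomy in part (2): recognizing that either $\mathcal Q_T(p)\in\partial T$, where the $\mathcal Q_T$--fibers are automatically singletons by \Cref{prop:Qmap}(2), or $\mathcal Q_T(p)\in\overline T$, where the structural input of \Cref{prop:Qmap}(1) forces $p$ to be proximal for $L(T)$ and hence (with part (1)) pins the lamination down via \Cref{prop:prox}(2). The remaining point to verify carefully is the elementary but necessary fact that an attracting fixed point of $\Phi$ is still an attracting fixed point of $\Phi^k$, which is what makes the reduction to the forward rotationless case legitimate.
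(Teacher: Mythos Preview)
Your proof is correct. Part (1) coincides with the paper's argument, except that you make the reduction to the forward rotationless case explicit; the paper simply cites \Cref{prop:att} without spelling out this reduction, so your version is in fact more careful here.

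Part (2), however, is a genuinely different argument from the paper's. The paper does \emph{not} use the map $\mathcal Q_T$ at all: instead it argues directly that proximality of $p$ for $L(T_\phi)$ produces arbitrarily long leaf segments $v_n$ for $L(T_\phi)$ occurring in the ray to $p$, observes that the hypothesis $(p',p)\in L(T)$ forces these same $v_n$ to be leaf segments for $L(T)$, and then runs a compactness argument to extract a common leaf $(u,w)\in L(T_\phi)\cap L(T)$, contradicting \Cref{prop:CHR}. Your route is cleaner and more structural: you invoke the $\mathcal Q_T$--fiber dichotomy from \Cref{prop:Qmap} to conclude that $p$ is proximal for $L(T)$ as well, and then \Cref{prop:prox}(2) finishes immediately. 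In effect the paper is re-deriving by hand the special case of \Cref{prop:prox}(2) it needs, whereas you simply cite it. The only extra hypothesis your approach requires is that a free arational tree has dense $\free$--orbits (so that $\mathcal Q_T$ and \Cref{prop:Qmap} apply), which follows from Reynolds' characterization since a free arational tree cannot be of surface type.
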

\begin{proof}
\Cref{prop:att} implies that
$Q_{T_\phi}(p)=C(H_\Phi)\in\overline{T_\phi}$. Therefore by part (1) of
\Cref{prop:Qmap}, $p$ is proximal for $L(T_\phi)$, as required.

We now prove part (2) of the corollary. Let $T\in \cvbar$ be a free arational tree such that $L(T)\ne L(T_\phi)$. Suppose that there exists $p\ne p'\in\partial\free$ such that
$(p',p)\in L(T)$.  If we knew that there exists some $q$ such that $(q,p)\in L(T_\phi)$, then we would obtain a contradiction with \Cref{prop:CHR}. However, under the assumptions made on $p$, it may happen that $Q_{T_\phi}^{-1}(Q_{T_\phi}(p))=\{p\}$, so that a point $q$ with $(q,p)\in L(T_\phi)$ does not exist. Thus we cannot directly appeal to  \Cref{prop:CHR}, and need an additional argument, provided below.

Since $p$ is proximal for $L(T_\phi)$, there exist
$X$--leaf segments $v_n$ for $L(T_\phi)$ with $|v_n|\to\infty$ as
$n\to\infty$, such that each $v_n$ occurs infinitely many times as a
subword in the geodesic ray from $1$ to $p$ in $\ca(\free,X)$.
Since $(p',p)$ is a leaf of $L(T)$, it follows that each $v_n$ is also
a leaf-segment for $L(T)$. 

For each $n\ge 1$ choose a geodesic segment $\gamma_n=[u_n,w_n]$ in
$\ca(\free,X)$  with label $v_n$ and passing through the vertex
$1\in\free$ such that $d_{\ca(\free,X)}(u_n,1)\to\infty$ and $d_{\ca(\free,X)}(1,w_n)\to\infty$ as $n\to\infty$. After passing to a subsequence, we may assume that the segments $\gamma_n$ converge to a bi-infinite geodesic from $u\in \partial\free$ to $w\in \partial \free$.

Since $v_n$ is a leaf-segment for $L(T_\phi)$
there exists a sequence $(s_n,s_n')\in L(T_\phi)$ such that the
geodesic from $s_n$ to $s_n'$ in $\ca(\free,X)$ passes through
$\gamma_n$ for every $n\ge 1$.
Similarly, since $v_n$ is a leaf-segment for $L(T)$, there exists a  $(t_n,t_n')\in L(T)$ such that the
geodesic from $t_n$ to $t_n'$ in $\ca(\free,X)$ passes through
$\gamma_n$ for every $n\ge 1$.
By construction it then follows that
\[
\lim_{n\to\infty} (s_n,s_n')=\lim_{n\to\infty} (t_n,t_n')=(u,w).
\]
Since $L(T_\phi)$ and $L(T)$ are closed in $\partial^2\free$, it
follows that $(u,w)\in L(T_\phi)\cap L(T)$.
However, since $T_\phi$, $T$ are free arational trees with $L(T_\phi)\ne L(T)$, this contradicts the conclusion $L(T_\phi)\cap L(T)=\varnothing$ of \Cref{prop:CHR}.
\end{proof}

\section{Hyperbolic extensions of free groups}

For the duration of this paper, we assume that $\free$ is a finite-rank free group with $\rank(\free)\ge 3$.
Note that if $F_2=F(a,b)$ is free of rank two, then for every $\phi\in\Out(F_2)$ we have $\phi([g])=[g^{\pm 1}]$ where $g=[a,b]$. For this reason if $1\to F_2\to E \to Q\to 1$ is a short exact sequence with $Q$ and $E$ hyperbolic, then $|Q|=[E:F_2]<\infty$. On the other hand, free groups of rank at least $3$ admit many interesting word-hyperbolic extensions, as discussed in more detail below.

\subsection{Subgroups of $\Out(\free)$ and hyperbolic extension of free groups} \label{sec:free_extensions}

We now recall a general class of hyperbolic $\free$--extensions constructed in \cite{DT1}. These hyperbolic extensions are the natural generalization of hyperbolic free-by-cyclic groups with fully irreducible monodromy. For any $\Gamma \le \Out(\free)$ there is an $\free$--extension $E_\Gamma$ obtained from the following diagram:
\begin{align} \label{cd:extension}
\begin{CD}
1 @>>> \free @>i>> \Aut(\free)  @>p>>  \Out(\free) @>>> 1\\
@.          @|                 @AAA                     @AAA            @.   \\
1 @>>> \free @>i>> E_\Gamma  @>p>> \Gamma    @>>> 1 \\	
\end{CD}
\end{align}
We say that $E_\Gamma \colonequals p^{-1}(\Gamma)$ is the $\free$--extension corresponding to $\Gamma$.

Recall that $\phi \in \Out(\free)$ is called \define{atoroidal} if no positive power of $\phi$ fixes a conjugacy class of $\free$. A key result of Brinkmann \cite{Brink} shows that for a cyclic subgroup $\langle \phi\rangle\le \Out(\free)$, the extension $E_{\langle \phi\rangle}$ is word-hyperbolic if and only if $\phi$ is atoroidal or finite order.
A subgroup $\Gamma \le \Out(\free)$ is  said to be \define{purely atoroidal} if every infinite order element of $\Gamma$ is atoroidal. 

The following theorem gives geometric conditions on a subgroup $\Gamma \le \Out(\free)$ that imply the corresponding extension $E_\Gamma$ is hyperbolic:

\begin{theorem}[Dowdall--Taylor \cite{DT1}] \label{th: DT1}
Let $\Gamma \le \Out(\free)$ be finitely generated. Suppose that $\Gamma$ is purely atoroidal and that for some $A \in \F^0$ the orbit map $\Gamma \to \F$ given by $g \mapsto gA$ is a quasi-isometric embedding. Then the corresponding extension $E_\Gamma$  is hyperbolic.
\end{theorem}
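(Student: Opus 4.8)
The plan is to realize $E_\Gamma$ as an extension of hyperbolic groups satisfying a flaring condition and then invoke a combination theorem. First, since $\F$ is Gromov-hyperbolic (\Cref{th:BFhyp}) and the orbit map $\Gamma\to\F$ is a quasi-isometric embedding, quasigeodesics in $\Gamma$ map to stable quasigeodesics in $\F$ and a standard thin-triangles argument shows that $\Gamma$ is itself word-hyperbolic; thus $1\to\free\to E_\Gamma\to\Gamma\to1$ is an extension of the hyperbolic group $\Gamma$ by the hyperbolic group $\free$. I would then appeal to a combination theorem of Bestvina--Feighn type --- most conveniently the combination theorem for metric (graph) bundles of Mj--Sardar, applied to the bundle $E_\Gamma\to\Gamma$ whose fibers are the cosets of $\free$ --- which reduces hyperbolicity of $E_\Gamma$ to two points: (i) the fibers being uniformly properly embedded (automatic here, since all fibers are cosets of the finitely generated group $\free$), and (ii) a \emph{flaring condition}. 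Fixing a free basis $X$ of $\free$ and writing $\ell_X(w)$ for the cyclically reduced word length of a conjugacy class $w$, flaring asks for constants $\lambda>1$ and $m\ge1$ so that for every $M$ there is $A=A(M)$ with the property that whenever $(g_{-m},\dots,g_m)$ is an $M$-quasigeodesic in $\Gamma$ and $w\in\free$ satisfies $\ell_X(g_0(w))\ge A$, then $\lambda\cdot\ell_X(g_0(w))\le\max\{\ell_X(g_{-m}(w)),\ell_X(g_m(w))\}$. I would note at the outset that pure atoroidality is \emph{necessary}: if a power of an infinite-order $\phi\in\Gamma$ fixed a conjugacy class of $\free$, then $E_\Gamma$ would contain a copy of $\Z^2$, precluding hyperbolicity; in the verification below, pure atoroidality is exactly what rules out a conjugacy class whose length stays bounded along an infinite geodesic in $\Gamma$ --- the one configuration incompatible with flaring.

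To verify flaring I would transport the problem into Culler--Vogtmann Outer space $\X$. Fix a basepoint $G_0\in\X$. Because the increments $g_t^{-1}g_{t+1}$ along a geodesic in $\Gamma$ range over a finite generating set, the orbit $t\mapsto g_t\cdot G_0$ is a coarse path in $\X$ with uniformly bounded symmetric steps; and since the $\Out(\free)$-equivariant projection $\pi\colon\X\to\F$ is coarsely Lipschitz, the quasi-isometric embedding hypothesis makes $t\mapsto\pi(g_t\cdot G_0)$ a uniform quasigeodesic in $\F$, while coarse Lipschitzness additionally shows the orbit map $\Gamma\to\X$ is a quasi-isometric embedding. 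I would then invoke the Bestvina--Feighn fact that folding paths project to uniform quasigeodesics in $\F$ lying uniformly Hausdorff-close to $\F$-geodesics: running a standard (folding) geodesic $\rho$ in $\X$ between two far-apart orbit points, its $\F$-projection fellow-travels that of the orbit and so makes uniform linear progress in $\F$. The second ingredient --- which requires genuine work and uses both hypotheses on $\Gamma$ --- is a Morse-type stability statement: such a folding geodesic $\rho$ stays in a uniformly bounded symmetric neighborhood of the $\Gamma$-orbit. With this, $\ell(w\vert g_t\cdot G_0)$ is comparable, up to the fixed Lipschitz distortion between $G_0$ and the $X$-rose and up to sign conventions for the action, to the quantity $\ell_X(g_t(w))$ entering the flaring condition, and is in turn comparable to $\ell(w\vert\rho(s))$ for $\rho(s)$ near $g_t\cdot G_0$. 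Flaring for the orbit thereby reduces to the purely $\X$-theoretic statement: \emph{along a folding path $\rho\colon[a,b]\to\X$ whose $\F$-projection makes uniform linear progress, the lengths of conjugacy classes flare} --- i.e.\ there are $\lambda>1$ and $m_0$, depending only on $\rank(\free)$ and the progress constants, with $\max\{\ell(w\vert\rho(t-m_0)),\ell(w\vert\rho(t+m_0))\}\ge\lambda\cdot\ell(w\vert\rho(t))$ for every conjugacy class $w$ and every $t$ with $[t-m_0,t+m_0]\subseteq[a,b]$.

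The main obstacle is exactly this flaring statement --- together with the Morse-type stability feeding into it --- and it is here that hyperbolicity of $\F$ and pure atoroidality are used essentially. The mechanism for flaring is that along a folding path a conjugacy class $w$ is carried by a train-track-like structure on which folding expands the ``legal'' portions exponentially, while only ``illegal'' turns allow cancellation, hence loss of length, in the forward direction (and symmetrically for the time-reversed path). Definite progress in $\F$ forces any proper free factor carrying a long subsegment of $w$ to be eventually destroyed --- a persistent such free factor would pin the $\F$-projection down and contradict progress --- and once that happens the corresponding piece of $w$ is legal and grows exponentially in one of the two time directions. Pure atoroidality then closes the remaining gap in transferring this back to the $\Gamma$-orbit: were there a conjugacy class whose length stayed bounded in both directions along an infinite geodesic in $\Gamma$, a compactness argument would extract an infinite-order element of $\Gamma$ preserving a conjugacy class of $\free$, contrary to hypothesis. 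Assembling the pieces --- the folding-path flaring, the stability of the $\Gamma$-orbit in $\X$, the resulting flaring for the bundle $E_\Gamma\to\Gamma$, the automatic proper embedding of the fibers, and the Mj--Sardar combination theorem --- then yields that $E_\Gamma$ is word-hyperbolic, which is the assertion of \Cref{th: DT1}.
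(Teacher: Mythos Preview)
This theorem is not proved in the present paper at all: it is quoted verbatim from \cite{DT1} and stated without argument, so there is no proof here to compare your proposal against. What you have written is an outline of the proof \emph{in} \cite{DT1}, and as such it is broadly accurate. The skeleton there is indeed: (a) deduce that $\Gamma\to\X$ is a quasi-isometric embedding from the hypothesis $\Gamma\to\F$ together with coarse Lipschitzness of $\pi\colon\X\to\F$; (b) prove a Morse/quasiconvexity statement for the $\Gamma$--orbit in $\X$ (this is Theorem~5.5 of \cite{DT1}, invoked in the present paper during the proof of \Cref{lem:folding_rays}); (c) establish flaring along folding paths that stay near the orbit (Proposition~6.11 of \cite{DT1}, invoked here in \Cref{lem:eventual_growth}); and (d) feed this into a Bestvina--Feighn/Mj--Sardar combination theorem. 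Two small corrections to your narrative: the quasiconvexity step (b) in \cite{DT1} uses only convex cocompactness, not pure atoroidality; and the flaring in (c) is likewise a geometric fact about folding paths making definite progress in $\F$, with pure atoroidality entering to guarantee the hypotheses of the combination theorem hold uniformly over all of $\Gamma$ rather than just along a single folding ray. But these are refinements of emphasis; your sketch captures the architecture of \cite{DT1} correctly.
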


Recall that we have called a finitely generated subgroup $\Gamma \le \Out(\free)$ \emph{convex cocompact} if some orbit map $\Gamma \to \F$ is a quasi-isometric embedding. Hence, \Cref{th: DT1} implies that if $\Gamma$ is convex cocompact, then $E_\Gamma$ is hyperbolic if and only if $\Gamma$ is purely atoroidal. Note that if $\phi \in \Out(\free)$ is fully irreducible, then $\langle \phi \rangle$ is convex cocompact by \Cref{th:BFhyp}.

\begin{remark}[Reformulation in terms of the co-surface graph]\label{rmk:reform}
In \cite{DT2}, the authors reformulate \Cref{th: DT1} in terms of the co-surface graph $\CS$. This is the $\Out(\free)$--graph defined as follows: vertices are conjugacy classes of primitive elements of $\free$ and two conjugacy classes $\alpha$ and $\beta$ are joined by an edge whenever there is a once punctured surface $S$ whose fundamental group can be identified with $\free$ in such a way that $\alpha$ and $\beta$ both represent simple closed curves on $S$. We note that closely related graphs appear in \cite{kapovich2007geometric, MR2, Mann-thesis}; see \cite{DT2} for a discussion and further references. In \cite[Theorem 9.2]{DT1}, it is shown that if $\Gamma \le \Out(\free)$ admits a quasi-isometric orbit map into $\CS$, then $\Gamma$ is purely atoroidal and convex cocompact, and hence the corresponding extension $E_\Gamma$ is hyperbolic. In \cite{DT2}, the converse is proven: A finitely generated subgroup $\Gamma \le \Out(\free)$ admits a quasi-isometric orbit map into the co-surface graph if and only if $\Gamma$ is purely atoroidal and convex cocompact. The authors in \cite{DT2} use this characterization to further study the geometry of the hyperbolic extension $E_\Gamma$.
\end{remark}

\subsection{Laminations for hyperbolic extensions} \label{sec:Mitra_lam}
Fix $\Gamma \le \Out(\free)$ finitely generated such that the corresponding extension 
\[
1 \longrightarrow \free \longrightarrow E_\Gamma \longrightarrow \Gamma \longrightarrow 1
\]
is an exact sequence of hyperbolic groups.

\begin{definition}[Mitra's laminations] \label{def:Mitra}
Let $z\in\partial \Gamma$.
Let $\rho$ be a geodesic ray in $\Gamma$ from $1$ to $z$, with the vertex sequence $g_1,g_2,g_3,\dots, g_n,\dots$ in $\Gamma$.

For $1\ne h\in \free$ let $w_n$ be a cyclically reduced word over $X^{\pm 1}$ representing the conjugacy class $[g_n(h)]$ in $\free$. Let $R_{z,h}$ be the set of all pairs $(u,u')\in \free\times\free$ such that the freely reduced form $v$ of $u^{-1}u'$ occurs a subword in a cyclic permutation of $w_n$ or of $w_n^{-1}$ for some $n\ge 1$.
Put
\[
\Lambda_{z,h}=\overline{R_{z,h}}\cap \DB
\] 
where $\overline{R_{z,h}}$ is the closure of $R_{z,h}$ in $(\free\cup\partial\free)\times (\free\cup\partial \free)$.
Thus $\Lambda_{z,h}$ consists of all $(p_1,p_2)\in\DB$ such that there exists a sequence $(u_i,u_i')\in \free\times\free$ converging to $(p_1,p_2)$ in $(\free\cup\partial\free)\times (\free\cup\partial \free)$ as $i\to\infty$ and such that for every $i\ge 1$ the freely reduced form $v_i$ of $(u_i)^{-1}u_i'$ occurs as a subword in a cyclic permutation of some $w_{n_i}$ or of $w_{n_i}^{-1}$ (which, since $p_1\ne p_2$, automatically implies that $n_i\to\infty$ as $i\to\infty$). 
Put 
\[
\Lambda_{z} := \bigcup_{h\in \free\setminus\{1\}}\Lambda_{z,h}.
\]
Finally, define the \emph{ending lamination} of the extension to be
\[
\Lambda := \bigcup_{z\in \partial \Gamma} \Lambda_{z}.
\]
\end{definition}

\begin{remark}\label{rem:Lambda_z}
Thus $\Lambda_{z,h}$ consists of all $(p,q)\in \DB$ such that for every subword $v$ of the bi-infinite geodesic from $p$ to $q$ in $\ca(\free,X)$ there exists $m\ge 1$ such that $v$ is a subword of a cyclic permutation of $w_m$ or $w_m^{-1}$.

Mitra \cite[Lemma 3.3]{MitraEndingLams} shows that the definition of $\Lambda_z$ does not depend on the choice of a geodesic ray $(g_n)_n$ from $1$ to $z$ in the Cayley graph of $\Gamma$.
Moreover, the proof of  \cite[Lemma 3.3]{MitraEndingLams} implies that instead of a geodesic ray one can also use any quasigeodesic sequence from $1$ to $z$ in $\Gamma$. 
\cite[Remark on p. 399]{MitraEndingLams} also shows that for every $z\in\partial \Gamma$ there exists a finite subset $R\subseteq \free\setminus\{1\}$ such that $\Lambda_z=\cup_{h\in R} \Lambda_{z,h}$. Since every $\Lambda_{z,h}$ is an algebraic lamination on $\free$, it follows that $\Lambda_z$ is also an algebraic lamination on $\free$.

The results of Mitra~\cite{MitraEndingLams}  imply that $\Lambda_{z,h}$ does not depend on the choice of a free basis $X$ of $\free$. 
Therefore $\Lambda_z$ and $\Lambda$ are independent of $X$ as well. In \Cref{lem:mitraII} below we give an equivalent definition of $\Lambda_{z,h}$ which does not involve the choice of $X$; this gives another proof that $\Lambda_{z,h}$ is independent of  $X$.

Mitra \cite{MitraEndingLams} in fact defines $\Lambda_{z,h}$, $\Lambda_z$ and $\Lambda$ in the context of an arbitrary short exact sequence of word-hyperbolic groups. As it suffices for our purposes, here we have only presented the definitions in the somewhat more transparent setting free group extensions.
\end{remark}

Recall that for a collection $\Omega$ of conjugacy classes of $\free$, we define $L(\Omega)$ to be the smallest algebraic lamination containing $L(g)$ for each $g\in \Omega$.

\begin{lemma}\label{lem:mitraII}
For $z \in \partial \Gamma$, let $(g_i)_{i\ge0}$ be a geodesic ray in $\Gamma$ such that $\lim_{i\to \infty} g_i =z \in \partial \Gamma$. Then for any $h \in \free \setminus \{1\}$ we have
\[
\Lambda_{z,h} = \bigcap_{k\ge0} L\big(\{g_i (h): i\ge k\}\big). 
\]
\end{lemma}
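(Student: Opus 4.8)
The plan is to unwind both sides of the claimed equality and show they describe the same subset of $\DB$ in terms of which finite words over $X^{\pm1}$ appear as subwords. Recall from \Cref{rem:Lambda_z} that $(p,q)\in\Lambda_{z,h}$ if and only if every finite subword $v$ of the bi-infinite geodesic from $p$ to $q$ in $\ca(\free,X)$ occurs as a subword of a cyclic permutation of some $w_m^{\pm1}$, where $w_m$ is a cyclically reduced representative of $[g_m(h)]$. On the other hand, by the description of $L(\Omega)$ as $\overline{\bigcup_{g\in\Omega}L(g)}$ together with the structure of $L(g)$, membership $(p,q)\in L(\{g_i(h):i\ge k\})$ amounts to: every finite subword $v$ of the geodesic from $p$ to $q$ occurs as a subword of some bi-infinite power $(w_m^{\pm1})^\infty$ with $m\ge k$ — equivalently, $v$ is a subword of a cyclic permutation of a sufficiently high power $(w_m^{\pm1})^N$. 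Intersecting over all $k$ then says: every finite subword $v$ occurs this way for \emph{arbitrarily large} $m$.

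First I would make precise the elementary combinatorial fact that a freely reduced word $v$ over $X^{\pm1}$ is a subword of some power of (a cyclic permutation of) a cyclically reduced word $w$ if and only if either $|v|\le |w|$ and $v$ is a subword of a cyclic permutation of $w^{\pm1}$ (the two notions coincide for short $v$), or $|v|>|w|$, in which case $v$ has period dividing $|w|$ and is forced to be a subword of $w^\infty$. The upshot is: for a fixed conjugacy class $[g_i(h)]$, the set of leaf segments of $L(g_i(h))$ is exactly the set of words that are subwords of cyclic permutations of $w_i^{\pm1}$ together with their ``periodic extensions,'' and crucially, the set of \emph{subwords of bounded length} $\le |w_i|$ agrees with ``subwords of a cyclic permutation of $w_i^{\pm1}$.'' Since $n_i\to\infty$ forces $|w_{n_i}|\to\infty$ (this uses that $h\ne1$ and the $g_i$ form a quasigeodesic, hence $\ell(w_i)$ grows — or more simply that only finitely many conjugacy classes have bounded length representatives and the $[g_i(h)]$ are eventually distinct), any fixed finite $v$ that appears as a subword of $w_m^{\pm1}$-cyclic-permutations for infinitely many $m$ appears as a genuine subword of a cyclic permutation (not needing the periodic extension) for all large $m$.

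With that combinatorial lemma in hand, the proof becomes a matching of descriptions. For the inclusion $\Lambda_{z,h}\subseteq\bigcap_k L(\{g_i(h):i\ge k\})$: if $(p,q)\in\Lambda_{z,h}$, then every finite subword $v$ of the $p$--$q$ geodesic occurs in a cyclic permutation of some $w_m^{\pm1}$, and since $p\ne q$ the relevant index $m$ can be taken arbitrarily large (as in \Cref{def:Mitra}, $n_i\to\infty$); hence for each $k$, $v$ is a leaf segment of $L(g_m(h))$ for some $m\ge k$, so $(p,q)\in L(\{g_i(h):i\ge k\})$ by the compactness characterization of $L(\Omega)$. For the reverse inclusion: if $(p,q)\in L(\{g_i(h):i\ge k\})$ for every $k$, then each finite subword $v$ of the $p$--$q$ geodesic is, for arbitrarily large $m$, a subword of $(w_m^{\pm1})^\infty$; taking $m$ large enough that $|w_m|\ge|v|$, the combinatorial lemma upgrades this to $v$ being a subword of a cyclic permutation of $w_m^{\pm1}$, whence $(p,q)\in\Lambda_{z,h}$ by \Cref{rem:Lambda_z}.

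The main obstacle I anticipate is the bookkeeping in the combinatorial lemma — specifically being careful about the distinction between ``subword of a cyclic permutation of $w$'' and ``subword of the bi-infinite periodic line $w^\infty$,'' and verifying that $|w_{n}|\to\infty$ so that this distinction evaporates in the limit. The growth of $|w_n|$ should follow because distinct conjugacy classes have distinct cyclically reduced representatives, the classes $[g_n(h)]$ are eventually pairwise distinct (else a power of some element of $\Gamma$ fixes $[h]$, contradicting that infinite-order elements of a convex cocompact $\Gamma$ are atoroidal — though actually one only needs the $g_n$ to be distinct in $\Gamma$ and $h\ne1$, since the stabilizer of $[h]$ in $\Out(\free)$ acting on the geodesic ray can contribute at most finitely many indices with a given value of $[g_n(h)]$), and only finitely many conjugacy classes have cyclically reduced length below any fixed bound. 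Everything else is a routine translation between the ``subword'' language of \Cref{rem:Lambda_z} and the ``$L(\Omega)$ as a closure'' language of \Cref{sec:laminations_on_free}.
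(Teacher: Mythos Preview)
Your proposal is correct and follows essentially the same route as the paper's proof: both argue the two inclusions separately, both reduce the nontrivial inclusion $L\subseteq\Lambda_{z,h}$ to the combinatorial observation that once $\norm{w_n}\ge|v|$ a subword of a power of $w_n^{\pm1}$ is already a subword of a cyclic permutation of $w_n^{\pm1}$, and both establish $\norm{w_n}\to\infty$ via the fact that the stabilizer in $\Gamma$ of any conjugacy class is finite (the paper invokes \cite[Lemma~2.13]{DT1} for a uniform bound on torsion subgroups, which is exactly your ``finitely many indices with a given value of $[g_n(h)]$'' argument). Your parenthetical alternative---appealing only to the stabilizer of $[h]$ in $\Out(\free)$---is not quite right as stated since that stabilizer is generally infinite, but your primary argument using atoroidality of infinite-order elements of $\Gamma$ matches the paper exactly.
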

\begin{proof}
Let $w_n$ be the cyclically reduced form over $X^{\pm 1}$ of $g_n(h)$. 
Recall that $\Lambda_{z,h}$ consists of all $(p,q)\in \DB$ such that for every finite subword $v$ of the bi-infinite geodesic in $\ca(\free,X)$ from $p$ to $q$ there exists $n\ge 1$ such that $v$ is a subword of a cyclic permutation of $w_n$ or of $w_n^{-1}$.

Put $L\colonequals \bigcap_{k\ge0} L(\{g_i (h): i\ge k\})$. Then $L$ consists of all $(p,q)\in \DB$ such that for every finite subword $v$ of the bi-infinite geodesic in $\ca(\free,X)$ from $p$ to $q$ and every $M\ge 1$ there exist $n\ge M$ and $m\in\mathbb Z\setminus\{0\}$ such that $v$ is a subword of a cyclic permutation of $w_n^m$.  Hence $\Lambda_{z,h}\subseteq L$.

 Let $(p,q)\in L$ be arbitrary.  Let $v$ be a finite subword of of the bi-infinite geodesic in $\ca(\free,X)$ from $p$ to $q$. 
 We will use the following claim to complete the proof:
 \begin{claim*} \label{claim:growth}
The cyclically reduced length $\norm{w_n}$ of $w_n$ tends to $\infty$ as $n\to\infty$.
\end{claim*}
Assuming the claim, choose $M\ge 1$ such that for all $n\ge M$ we have $\norm{w_n}\ge\abs{v}$. Since $(p,q)\in L$, there exist $n\ge M$ and $m\in\mathbb Z\setminus\{0\}$ such that $v$ is a subword of a cyclic permutation of $w_n^m$. The fact that $\norm{w_n}\ge \abs{v}$ implies that $v$ is a subword of a cyclic permutation of $w_n$ or of $w_n^{-1}$. Therefore $(p,q)\in \Lambda_{z,h}$. Hence $L\subseteq \Lambda_{z,h}$ and so $L= \Lambda_{z,h}$, as required.

We now prove the claim. Note that since $E_\Gamma$ is hyperbolic, each infinite order element of $\Gamma$ is atoroidal. Hence, if we denote by $\Gamma_\alpha$ the subgroup of $\Gamma$ consisting of those elements which fix the conjugacy class $\alpha$, then $\Gamma_\alpha$ is a torsion subgroup of $\Out(\free)$ and hence by \cite[Lemma 2.13]{DT1} has $\abs{\Gamma_\alpha} \le e$ for some $e\ge 0$ depending only on $\rank(\free)$. If the claim is false, then there is a $D \ge0$ and a infinite subsequence such that $\norm{g_{n_i}(h)}\le D$ for all $i\ge0$. Let $C$ denote the finite number  (depending on $D$ and $X$) of conjugacy classes of $\free$ whose cyclically reduced length is at most $D$. Hence if $k \ge C(e+2)$, we may find at least $e+2$ distinct elements in the list $g_{n_1}(h),\dotsc,g_{n_k}(h)$ that all belong to the same conjugacy class $\alpha$. This produces $e+1$ distinct elements of $\Gamma$ which fix the conjugacy class $\alpha$. This contradicts our choice of $e$ and completes the proof of the claim.
\end{proof}

The main result of Mitra in \cite{MitraEndingLams} is:

\begin{theorem}\label{th:Mj_lam}\cite[Theorem 4.11]{MitraEndingLams}
Suppose that $1 \to H \to G \to \Gamma \to 1$ is an exact sequence of hyperbolic groups with Cannon--Thurston map $\partial i: \partial H \to \partial G$. Then for distinct points $p,q\in \partial H$, $\partial i(p) = \partial i(q)$ if and only if $(p,q)\in \Lambda$ if and only if $(p,q) \in \Lambda_z$ for some $z \in \partial \Gamma$. 
\end{theorem}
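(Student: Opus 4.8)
This is Mitra's theorem \cite[Theorem 4.11]{MitraEndingLams} (in our setting the kernel is $\free$), so the plan is to reconstruct his proof, taking as given his coarse model for the extension and his construction of the Cannon--Thurston map. In that model, after fixing generators, $\ca(G)$ is quasi-isometric to a tree of spaces fibering over $\ca(\Gamma)$ whose fibers are copies of $\ca(\free)$, with adjacent fibers identified by quasi-isometries of uniformly bounded constant induced by the conjugation automorphisms $\Phi_g$, $g\in G$; the Cannon--Thurston map $\partial i$ is realized by assigning to a geodesic $\lambda$ in the fiber over $1\in\Gamma$ a \emph{hyperbolic ladder} $B(\lambda)\subseteq\ca(G)$, the union of the flow-ups of $\lambda$ along all $\ca(\Gamma)$--geodesics issuing from $1$, and showing $B(\lambda)$ is $K$--quasiconvex with $K$ independent of $\lambda$. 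Note first that the middle equivalence ``$(p,q)\in\Lambda\iff(p,q)\in\Lambda_z$ for some $z$'' is immediate from $\Lambda=\bigcup_{z\in\partial\Gamma}\Lambda_z$, so only its equivalence with $\partial i(p)=\partial i(q)$ requires argument.

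For the direction that leaves are identified, suppose $(p,q)\in\Lambda_{z,h}$ with $p\neq q$; let $\gamma$ be the bi-infinite geodesic from $p$ to $q$ in the fiber over $1$, normalized through $1\in\free$, and choose $x_n\to p$, $y_n\to q$ along $\gamma$ at $\ca(\free)$--distance $n$ from $1$. By \Cref{def:Mitra}, $[x_n,y_n]$ is a subword of a cyclically reduced representative $w_{m_n}$ of $[g_{m_n}(h)]$ with $m_n\to\infty$, where $(g_j)$ is a geodesic ray to $z$, so $|g_{m_n}|_\Gamma=m_n$. The essential point is that $w_{m_n}$ is conjugate \emph{in $G$} to the fixed element $h$ (by any lift of $g_{m_n}$); hence flowing $[x_n,y_n]$ up the bundle along $[1,g_{m_n}]$ carries it to a set $B_n$ lying in an axis of $h$ in the fiber over $g_{m_n}$. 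Since this flow-up is $w_{m_n}$--equivariant while $w_{m_n}$ translates the source axis by $\norm{w_{m_n}}$ but the target axis by the bounded amount $\norm{h}$, and since $[x_n,y_n]$ spans at most one source period, $B_n$ has diameter $O_h(1)$. Thus $x_n,y_n$ each lie within $m_n+O(1)$ of $B_n$, while $B_n$ lies at distance $m_n+O(1)$ from $1\in G$; a Gromov-product estimate based at a point of $B_n$ then gives $(x_n\mid y_n)_1\geq\tfrac12|x_n|_G-O(1)\to\infty$ (recall $|x_n|_G\to\infty$ since $x_n\to\partial i(p)\in\partial G$), so $\partial i(p)=\lim x_n=\lim y_n=\partial i(q)$.

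For the converse, suppose $\partial i(p)=\partial i(q)$ with $p\neq q$, and pick $x_n\to p$, $y_n\to q$ along the geodesic $\gamma$ from $p$ to $q$ in the fiber over $1$. Then $(x_n\mid y_n)_1\to\infty$, so $d_G(1,[x_n,y_n]_G)\to\infty$; since the $\ca(\free)$--geodesic between $x_n$ and $y_n$ is a sub-segment of $\gamma$ through $1$, the $G$--geodesic $[x_n,y_n]_G$ cannot stay uniformly near the fiber over $1$, so its projection to $\ca(\Gamma)$ reaches $\Gamma$--distance $\to\infty$ from $1$. Let $g_n'\in\Gamma$ be a point of $[x_n,y_n]_G$ of maximal $\Gamma$--height; after a subsequence $g_n'\to z\in\partial\Gamma$. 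By hyperbolicity the ``turnaround'' of $[x_n,y_n]_G$ at that height is a bounded set $B_n$ in the fiber over $g_n'$, and since $[x_n,y_n]_G$ lies near the ladder $B(\gamma)$ --- which over $[1,g_n']$ is the flow-up of $\gamma$ --- the whole segment $[x_n,y_n]\subset\gamma$ flows up over $[1,g_n']$ into a bounded neighborhood of $B_n$. Flowing $B_n$ back down exhibits $[x_n,y_n]$, up to bounded error, as a subword of a cyclically reduced representative of $[g_n'(h)]$ for some nontrivial $h\in\free$ of length bounded in terms of $K$; only finitely many such $h$ occur, so after a further subsequence $h$ is fixed, and $n\to\infty$ yields $(p,q)\in\Lambda_{z,h}\subseteq\Lambda$, using \Cref{rem:Lambda_z} that a quasigeodesic sequence to $z$ may be used in place of a geodesic ray.

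The main obstacle is this converse implication: manufacturing, from the single identification $\partial i(p)=\partial i(q)$, both a boundary direction $z\in\partial\Gamma$ and an element $h\in\free$ that witness the leaf. This requires the fine interaction between $G$--geodesics and the fibers --- in particular the fact that the part of a fiber-geodesic lying in the uniformly quasiconvex ladder is compressed to bounded diameter when flowed up the bundle, which is precisely what converts it into a piece of a conjugacy class $[g_n'(h)]$. The forward direction, by contrast, is comparatively soft, needing only the constancy of the $G$--conjugacy class of the elements $g_j(h)$ together with hyperbolicity of $G$.
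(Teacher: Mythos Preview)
The paper does not prove this statement at all: it is quoted verbatim as Mitra's Theorem 4.11 from \cite{MitraEndingLams} and used as a black box, with no argument supplied. So there is no ``paper's own proof'' to compare against, and your proposal goes well beyond what the paper itself does.

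That said, your sketch does follow the architecture of Mitra's actual proof --- the tree-of-spaces model, the quasiconvex ladder $B(\lambda)$, and the flow of fiber-segments to distant fibers are precisely his ingredients. A few places are genuinely loose, however. In the forward direction, the assertion that flowing $[x_n,y_n]$ up along $[1,g_{m_n}]$ lands in a set of diameter $O_h(1)$ is the crux and is not free: the fiber-identifications are quasi-isometries, not isometries, and one needs Mitra's retraction onto the ladder (built by composing nearest-point projections fiber by fiber) to control the accumulated distortion over $m_n$ steps. In the converse direction, the claim that the ``turnaround'' of $[x_n,y_n]_G$ at the deepest fiber is a bounded set, and that the entire fiber-segment $[x_n,y_n]$ flows up into a bounded neighborhood of it, again requires the ladder's coarse Lipschitz retraction rather than just hyperbolicity of $G$; and producing a single $h$ of bounded length (rather than a sequence $h_n$) needs the explicit bound on the diameter of that turnaround in the fiber metric. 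None of this is wrong in spirit, but as written it is an outline of Mitra's argument rather than a proof, and in the paper under discussion no such outline is attempted.
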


\section{Dual laminations at the boundary of $\Gamma$} \label{sec:laminations_agree}

\begin{convention}\label{conv:main}
For the remainder of this paper, we fix a free group $\free$ of finite rank at least $3$ and a finitely generated, purely atoroidal, convex cocompact subgroup $\Gamma\le\Out(\free)$. Thus we may choose a cyclic free factor $x \in \F^0$ so that the orbit map $\Gamma \to \F$ given by $g \mapsto gx$ is a quasi-isometric embedding. This orbit map then induces a $\Gamma$--equivariant topological embedding $\kappa \colon \partial \Gamma \to \partial \F$ and we identify $\partial \Gamma$ with its image in $\partial \F$. Hence, each point $z\in\partial \Gamma$ correspond to equivalence class $T_z$ of arational trees, each of which has a well-defined dual lamination $L(T_z)$ (\Cref{def:dual_lam}).
Furthermore, \Cref{th: DT1} shows that the corresponding extension $E_\Gamma$ is a hyperbolic group. Thus the short exact sequence 
\[1 \longrightarrow \free \longrightarrow E_\Gamma\longrightarrow \Gamma \longrightarrow 1\]
of hyperbolic groups admits a surjective Cannon--Thurston map $\ct\colon \partial \free\to\partial E_\Gamma$ by \Cref{pd:mitra}, and for every $z\in \partial \Gamma$ there is a corresponding ending lamination $\Lambda_z$ as defined by Mitra in \Cref{def:Mitra}
\end{convention}

The main result of this section characterizes the laminations $\{\Lambda_z  \colon  z\in \partial \Gamma \}$ appearing in \Cref{th:Mj_lam} for the extension $1 \to \free \to E_\Gamma \to \Gamma \to 1$. Recall that we have denoted by $\partial \pi :\mathcal{AT} \to \partial \F$ the map which associates to each arational tree of $\cvbar$ the corresponding point in the boundary of the factor complex (see \Cref{th:factor_complex_boundary}).

\begin{theorem} \label{th:laminations_agree}
For each $z \in \partial \Gamma$, there is $T_z \in \cvbar$ which is free and arational such that $z \mapsto \partial \pi(T_z)$ under $\partial \Gamma \to \partial \F$ with the property that
\[\Lambda_z = L(T_z). \]
\end{theorem}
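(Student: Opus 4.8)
The plan is to prove \Cref{th:laminations_agree} in three parts: first produce the tree $T_z$, then establish $\Lambda_z\subseteq L(T_z)$, and finally establish the reverse inclusion $L(T_z)\subseteq\Lambda_z$. To produce $T_z$: since $\Gamma$ is convex cocompact the orbit map extends to a $\Gamma$--equivariant topological embedding $\kappa\colon\partial\Gamma\hookrightarrow\partial\F$, which \Cref{conv:main} uses to identify $\partial\Gamma$ with its image; by \Cref{th:factor_complex_boundary} the homeomorphism $\mathcal{AT}/\!\approx\,\to\,\partial\F$ lets me pull $z$ back to a well-defined $\approx$--class of arational trees, and I take $T_z\in\cvbar$ to be any representative, so that $\partial\pi(T_z)=z$ and $L(T_z)$ is independent of the choice. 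I would then argue that $T_z$ is \emph{free}: a non-free arational tree is, by Reynolds' characterization \cite{Rey12}, dual to a filling geodesic lamination on a once-punctured surface with fundamental group $\free$, and such a tree cannot lie in the limit set of a purely atoroidal convex cocompact subgroup (cf.\ \cite{DT1}). Finally I fix once and for all a geodesic ray $(g_i)_{i\ge0}$ in $\Gamma$ from $1$ to $z$, so that by \Cref{lem:mitraII} we have $\Lambda_{z,h}=\bigcap_{k\ge0}L(\{g_i(h):i\ge k\})$ for every $1\ne h\in\free$ and $\Lambda_z=\bigcup_h\Lambda_{z,h}$ is a finite union of algebraic laminations (\Cref{rem:Lambda_z}), hence an algebraic lamination.

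For the inclusion $\Lambda_z\subseteq L(T_z)$, my plan is to deduce it formally from \Cref{lem:mitraII} together with \Cref{prop:limit_to_zero}, which asserts that $\ell_{T_z}(g_i(h))\to0$ as $i\to\infty$ for every $1\ne h\in\free$. Concretely: fix $h$ and $\epsilon>0$, choose $k$ with $\ell_{T_z}(g_i(h))\le\epsilon$ for all $i\ge k$, so $\{g_i(h):i\ge k\}\subseteq\Omega^{\le\epsilon}(T_z)$ and hence $L(\{g_i(h):i\ge k\})=\overline{\bigcup_{i\ge k}L(g_i(h))}\subseteq L(\Omega^{\le\epsilon}(T_z))=L^{\le\epsilon}(T_z)$; by \Cref{lem:mitraII} this gives $\Lambda_{z,h}\subseteq L^{\le\epsilon}(T_z)$, and letting $\epsilon\to0$ yields $\Lambda_{z,h}\subseteq\bigcap_{\epsilon>0}L^{\le\epsilon}(T_z)=L(T_z)$ by \Cref{def:dual_lam}. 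Taking the union over $h$ gives $\Lambda_z\subseteq L(T_z)$, and the argument applies verbatim to every point of $\partial\Gamma$.

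For the reverse inclusion $L(T_z)\subseteq\Lambda_z$, I would proceed structurally, using that $\Lambda_{z'}\subseteq L(T_{z'})$ has been established for \emph{every} $z'\in\partial\Gamma$. Step one: $\Lambda_z\ne\varnothing$. Fixing $h\ne1$ and letting $w_i$ be the cyclically reduced form of $g_i(h)$, the cyclically reduced lengths $\norm{w_i}\to\infty$ (the growth claim in the proof of \Cref{lem:mitraII}); after passing to a subsequence the projective currents $[\eta_{g_i(h)}]$ converge to some $[\mu]$ with $\mu\ne0$, and directly from the definitions $\mathrm{supp}(\mu)\subseteq\Lambda_{z,h}$, so $\Lambda_{z,h}\ne\varnothing$. (One also has $\langle T_z,\mu\rangle=0$ by \Cref{prop:limit_to_zero} and continuity of the intersection pairing \cite{kapovich2007geometric}, but only nonemptiness is needed here.) Step two: $\Lambda_z$ is diagonally closed. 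If $a,b,c\in\partial\free$ are distinct with $(a,b),(b,c)\in\Lambda_z$, then $\ct$ identifies $a,b,c$ by \Cref{th:Mj_lam}, so $(a,c)\in\Lambda_{z'}\subseteq L(T_{z'})$ for some $z'\in\partial\Gamma$; but $(a,b)\in L(T_z)$ and $(a,c)\in L(T_{z'})$ share the endpoint $a$, so \Cref{prop:CHR} forces $T_{z'}\approx T_z$, hence $z'=z$ (as $\kappa$ and the homeomorphism $\mathcal{AT}/\!\approx\,\to\,\partial\F$ are injective), and $(a,c)\in\Lambda_z$. Step three: by the structure theory of dual laminations of free arational trees \cite{CHL2,CHR11,Rey12}, $L(T_z)$ has a unique minimal sublamination $L_{\min}$ and coincides with the closure of the diagonal closure of $L_{\min}$; since $\Lambda_z$ is a nonempty closed $\free$--invariant sublamination of $L(T_z)$ it contains a minimal sublamination, necessarily $L_{\min}$, and being diagonally closed and closed it then contains all of $L(T_z)$. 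This gives $\Lambda_z=L(T_z)$.

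The main obstacle is clearly \Cref{prop:limit_to_zero}. It is routine to see that the limiting current $\mu$ above satisfies $\langle T_z,\mu\rangle=0$, but this is an averaged statement and is strictly weaker than the pointwise conclusion $\ell_{T_z}(g_i(h))\to0$ that both reductions above depend on; proving the latter requires the analysis from \cite{DT1} of folding paths in Outer space that remain uniformly close to the $\Gamma$--orbit. Once \Cref{prop:limit_to_zero} is in hand, everything else — the formal deduction of $\Lambda_z\subseteq L(T_z)$, nonemptiness, diagonal closure, and the structural input on arational laminations — is comparatively soft.
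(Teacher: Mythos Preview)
Your proposal is correct and follows essentially the same route as the paper: use \Cref{prop:limit_to_zero} to obtain $\Lambda_z\subseteq L(T_z)$, then combine diagonal closure of $\Lambda_z$ (via \Cref{th:Mj_lam} and \Cref{prop:CHR}, applied simultaneously to all $z'\in\partial\Gamma$) with the structure theory of $L(T_z)$ for free arational trees \cite{CHR11} to force equality. One small note: your stand-alone argument for freeness of $T_z$ via Reynolds' dichotomy and an appeal to \cite{DT1} is underspecified; in the paper freeness falls out directly from the folding-path construction (\Cref{lem:folding_rays}) that is in any case the engine behind \Cref{prop:limit_to_zero}, so there is no need to argue it separately. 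You are also slightly more careful than the paper in explicitly verifying $\Lambda_z\ne\varnothing$, which the paper leaves implicit (it follows from the growth claim inside the proof of \Cref{lem:mitraII}).
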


\begin{corollary}\label{cor:quotient1}\label{th:main_2}
Let $\Gamma\le\Out(\free)$ be convex cocompact and purely atoroidal. 
Then the Cannon--Thurston map $\ct\colon \partial \free \to \partial E_\Gamma$ identifies points $a,b \in \partial \free$  if and only if there exists $z \in \partial \Gamma$ such that $(a,b) \in L(T_z)$.  That is, $\ct$ factors through the quotient of $\partial \free$ by the equivalence relation\[a\sim b \iff (a,b)\in L(T_z)\text{ for some }z\in \partial \Gamma\]
and descends to an $E_\Gamma$--equivariant homeomorphism $\nicefrac{\partial\free}{\sim} \to \partial E_\Gamma$. 
\end{corollary}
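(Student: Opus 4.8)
The plan is to deduce the corollary from \Cref{th:laminations_agree} together with Mitra's \Cref{th:Mj_lam}, and then to invoke elementary point-set topology for the descent to a homeomorphism; since \Cref{th:laminations_agree} is taken as given, there is no substantial obstacle here, only two points requiring a moment's care.

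First I would identify the fibers of $\ct$. Applying \Cref{th:Mj_lam} to the short exact sequence $1\to\free\to E_\Gamma\to\Gamma\to1$ (which consists of hyperbolic groups by \Cref{conv:main}), two distinct points $a,b\in\partial\free$ satisfy $\ct(a)=\ct(b)$ if and only if $(a,b)\in\Lambda_z$ for some $z\in\partial\Gamma$. By \Cref{th:laminations_agree} we have $\Lambda_z=L(T_z)$ for every $z\in\partial\Gamma$, so this condition is exactly $(a,b)\in L(T_z)$ for some $z$. Defining $a\sim b$ to mean $a=b$ or $(a,b)\in L(T_z)$ for some $z\in\partial\Gamma$, we conclude that $\sim$ coincides with the relation ``$\ct(a)=\ct(b)$''; in particular $\sim$ is an equivalence relation whose classes are precisely the point-preimages of $\ct$. (One can also check this directly: symmetry is flip-invariance of laminations, and transitivity follows from \Cref{prop:CHR}, which forces any two of our trees whose dual laminations meet to satisfy $L(T_z)=L(T_{z'})$, combined with the fact — via \Cref{prop:keyQ} — that the dual lamination of a free tree with dense orbits is diagonally closed.)

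Next I would carry out the descent. The Cannon--Thurston map $\ct\colon\partial\free\to\partial E_\Gamma$ is continuous by \Cref{pd:mitra}, and surjective since $\free$ is infinite. As $\partial\free$ is compact and $\partial E_\Gamma$ is Hausdorff, $\ct$ is a closed map and hence a quotient map; its point-preimages are exactly the $\sim$-classes by the previous step. Therefore $\ct$ factors as $\partial\free\to\nicefrac{\partial\free}{\sim}\xrightarrow{\ \overline{\ct}\ }\partial E_\Gamma$ with $\overline{\ct}$ a continuous bijection, and since $\nicefrac{\partial\free}{\sim}$ is compact while $\partial E_\Gamma$ is Hausdorff, $\overline{\ct}$ is a homeomorphism.

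Finally, for equivariance, \Cref{prop:action} provides an action of $E_\Gamma$ on $\partial\free$ (by $g\cdot p=\Phi_g(p)$) for which $\ct$ is $E_\Gamma$-equivariant. Since $a\sim b$ is the relation $\ct(a)=\ct(b)$, equivariance of $\ct$ shows that $\sim$ is $E_\Gamma$-invariant, so the action descends to $\nicefrac{\partial\free}{\sim}$ and $\overline{\ct}$ is $E_\Gamma$-equivariant, completing the proof. The only slightly delicate points in this argument are the verification that $\sim$ is transitive — handled most cleanly by identifying it with the fiber relation of $\ct$ — and the standard but essential observation that a continuous surjection from a compact space to a Hausdorff space is automatically a quotient map.
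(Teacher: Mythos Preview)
Your proof is correct and follows essentially the same route as the paper: identify the relation $\sim$ with the fiber relation of $\ct$ via \Cref{th:Mj_lam} and \Cref{th:laminations_agree}, then use the standard compact-to-Hausdorff argument to upgrade the continuous bijection on the quotient to a homeomorphism, with equivariance coming from \Cref{prop:action}. The only cosmetic difference is that the paper phrases the point-set step by first observing that the relation is closed (so the quotient is Hausdorff), whereas you observe that $\ct$ is a closed map and hence a quotient map; these are equivalent formulations of the same fact.
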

\begin{proof}
The specified equivalence relation is by definition given by the subset $\bigcup_{z\in \partial \Gamma}L(T_z)=\Lambda$ of $\DB$, where the last equality holds by  \Cref{th:laminations_agree}. \Cref{th:Mj_lam} asserts that $\Lambda=\{(p,q)\in \partial\free\times\partial\free : \ct(p)=\ct(q)\}$.
Since the Cannon--Thurston map $\ct:\partial\free\to \partial E_\Gamma$ is continuous,  it follows that $\Lambda$ is a closed subset of $\partial\free\times\partial\free$. Therefore $\nicefrac{\partial\free}{\sim}$, equipped with the quotient topology, is a compact Hausdorff topological space. Moreover, the continuity and surjectivity of  $\ct\colon\partial\free\to \partial E_\Gamma$ now imply that $\ct$ quotients through to a continuous bijective map  $J\colon\nicefrac{\partial\free}{\sim}\to \partial E_\Gamma$, which is, by construction, $E_\Gamma$--equivariant. The fact that both $\nicefrac{\partial\free}{\sim}$ and $\partial E_\Gamma$ are compact Hausdorff topological spaces implies that $J$ is a homeomorphism, as required.
\end{proof}

Recall that by a general result of \cite{CHL0}, for every $z\in \partial \Gamma$ the map $\mathcal Q_{T_z}\colon \partial\free\to \hat T_z$ quotients through to a $\free$--equivariant homeomorphism $\partial\free/L(T_z)\to \hat T_z$, where $\partial\free/L(T_z)$ is given the quotient topology and where $\hat T_z$ is given the ``observer's topology''. Now a similar argument to the proof of \Cref{cor:quotient1} implies the following statement (we leave the details to the reader):

\begin{corollary}\label{cor:quotient2}
For each $z\in \partial \Gamma$ the Cannon--Thurston map $\ct\colon\partial \free \to E_\Gamma$ factors through $\mathcal Q_z\colon \partial \free \to \hat T_z$ and induces a continuous, surjective $\free$--equivariant map $\hat T_z \to \partial E_\Gamma$ (where $\hat T_z$ is equipped with the observer's topology).
\end{corollary}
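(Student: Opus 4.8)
The plan is to imitate the proof of \Cref{cor:quotient1}, but working with the single dual lamination $L(T_z)$ in place of the union $\Lambda$. First I would assemble the three inputs that make the argument go through. From \Cref{th:laminations_agree} we have the equality $L(T_z)=\Lambda_z$, and hence $L(T_z)\subseteq \Lambda$; combined with \Cref{th:Mj_lam} this shows that for distinct $p,q\in\partial\free$ one has $\ct(p)=\ct(q)$ whenever $(p,q)\in L(T_z)$. Since $T_z$ is free and arational, hence has dense $\free$--orbits, \Cref{prop:keyQ} applies and tells us that $\mathcal Q_{T_z}(p)=\mathcal Q_{T_z}(q)$ if and only if $(p,q)\in L(T_z)$. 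Finally, the result of \cite{CHL0} recalled just above identifies $\hat T_z$, equipped with the observer's topology, $\free$--equivariantly with the quotient $\partial\free/L(T_z)$; in particular $\mathcal Q_{T_z}\colon\partial\free\to\hat T_z$ is a topological quotient map.

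With these in hand I would define the map $\Psi\colon\hat T_z\to\partial E_\Gamma$ on points: given $x\in\hat T_z$, choose any $p\in\mathcal Q_{T_z}^{-1}(x)$ (nonempty since $\mathcal Q_{T_z}$ is surjective) and set $\Psi(x)\colonequals\ct(p)$. This is well defined, because if $p,p'\in\mathcal Q_{T_z}^{-1}(x)$ then $(p,p')\in L(T_z)=\Lambda_z$ by \Cref{prop:keyQ}, so $\ct(p)=\ct(p')$ by \Cref{th:Mj_lam}. By construction $\Psi\circ\mathcal Q_{T_z}=\ct$. Surjectivity of $\Psi$ is then immediate from surjectivity of $\ct$ (\Cref{pd:mitra}), and $\free$--equivariance of $\Psi$ follows from the $\free$--equivariance of $\mathcal Q_{T_z}$ and of $\ct$ (the latter by \Cref{prop:action}) together with the surjectivity of $\mathcal Q_{T_z}$.

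For continuity I would invoke the universal property of the quotient topology: since $\mathcal Q_{T_z}$ is a quotient map and the composite $\Psi\circ\mathcal Q_{T_z}=\ct$ is continuous, $\Psi$ is continuous. I do not anticipate any genuine obstacle here, since all the substantive work is already contained in \Cref{th:laminations_agree} and the argument above is a formal diagram chase. The only point requiring attention is that the topology on $\hat T_z$ appearing in the statement, namely the observer's topology, coincides with the quotient topology induced by $\mathcal Q_{T_z}$; but this is precisely the content of the \cite{CHL0} identification $\partial\free/L(T_z)\cong\hat T_z$ quoted above, so no new input is needed. (Note that $\Psi$ is typically not injective, since $\ct$ also identifies points lying in $\Lambda_{z'}$ for $z'\neq z$; accordingly the corollary only asserts continuity, surjectivity, and $\free$--equivariance.)
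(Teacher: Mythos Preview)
Your proposal is correct and follows precisely the approach the paper indicates: the paper itself does not give a detailed proof of this corollary, instead citing the \cite{CHL0} identification $\partial\free/L(T_z)\cong\hat T_z$ (observer's topology) and saying that ``a similar argument to the proof of \Cref{cor:quotient1} implies the following statement (we leave the details to the reader).'' You have accurately supplied exactly those details, including the correct observation that the induced map is not injective and so only continuity, surjectivity, and $\free$--equivariance are claimed.
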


We now start working towards the proof of \Cref{th:laminations_agree}. For our next lemma we assume that the reader has some familiarity with folding paths in $\X$; for example \cite[Section 2, 4]{BF14}. This material is also summarized in \cite[Section 2.7]{DT1} and the reader may find helpful the discussion appearing before Lemma $6.9$ of \cite{DT1}.
For a folding path $G_t$, we say that a conjugacy class $\alpha$ is \define{mostly legal} at time $t_0$ if its \emph{legal length} $\mathrm{leg}(\alpha\vert G_{t_0})$ is at least half of its total length $\len(\alpha\vert G_{t_0})$. Of course, if $\alpha$ is mostly legal at time $t_0$, then it is mostly legal for all $t\ge t_0$.  Also, for any path $q\colon \I \to \X$, we say that $q$ has the $(\lambda,N_0)$--flaring property for constants $\lambda, N_0 \ge1$ if for any $t \in \I$ and any $\alpha \in \free \setminus \{1\}$
\[
\lambda \cdot \ell(\alpha| q(t)) \le \max \big\{\ell(\alpha| q(t-N_0)), \ell(\alpha| q(t+N_0))\big\}.
\]

Fix a rose $R \in \X$ with a petal labeled by our fixed $x \in \F^0$. Observe that $x$ is contained in the projection $\pi(R)$ of $R$ to the factor complex $\F$.

\begin{lemma}[Everyone's eventually mostly legal] \label{lem:eventual_growth}
For $\Gamma \le \Out(\free)$ as in \Cref{conv:main} and for any $K \ge0$ and $\lambda >1$, there exist $N_0, c\ge1$ satisfying the following:
Suppose that $\gamma\colon \I \to \X$ is a unit speed folding path contained in a symmetric $K$--neighborhood of $\Gamma \cdot R \subset \X$. Then $\gamma\colon \I \to \X$ has the $(\lambda,N_0)$--flaring property. Moreover, for any conjugacy class $\alpha$ whose length along $\gamma$ is minimized at $t_\alpha\in\I$, we have for all $t\ge t_{\alpha}$
\[
\frac{1}{c} \cdot e^{(t - t_\alpha)}\ell(\alpha|\gamma(t_\alpha)) \le \ell(\alpha|\gamma(t)) \le  e^{(t - t_\alpha)}\ell(\alpha|\gamma(t_\alpha)).
\]
\end{lemma}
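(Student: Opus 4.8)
The two assertions are closely related: the upper bound $\ell(\alpha|\gamma(t)) \le e^{(t-t_\alpha)}\ell(\alpha|\gamma(t_\alpha))$ is immediate because $\gamma$ is a unit-speed $d_\X$--geodesic, so $d_\X(\gamma(t_\alpha),\gamma(t)) = t - t_\alpha$ and hence $\ell(\alpha|\gamma(t))/\ell(\alpha|\gamma(t_\alpha)) \le \Lip(\gamma(t_\alpha),\gamma(t)) = e^{t-t_\alpha}$. The flaring property and the matching lower bound are where the real work lies, and the plan is to extract both from the fact that $\gamma$ stays close to the $\Gamma$--orbit of $R$ together with the quasi-isometric embedding hypothesis on $\Gamma$. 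First I would recall the legality machinery for folding paths: along a folding path, the legal length of any conjugacy class $\alpha$ grows by a definite exponential factor once $\alpha$ becomes ``mostly legal'', and $\alpha$ becomes mostly legal after the folding path has traveled a definite $\F$--distance (this is the content of the relevant lemmas in \cite{BF14}; see also \cite[Section 6]{DT1}). The key point is that because $\gamma$ lies in a $K$--neighborhood of $\Gamma\cdot R$, the projection $\pi\circ\gamma$ to the factor complex $\F$ is an unparametrized quasigeodesic fellow-traveling the $\Gamma$--orbit of $x$, which is a quasigeodesic in $\F$ by \Cref{conv:main}; thus $\pi(\gamma(t))$ makes definite linear progress in $\F$ as $t$ varies, and in particular $\gamma$ cannot linger near any one region of $\F$.

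Here is the order of the steps. (1) Using the flaring/legality results for folding paths that stay near $\Gamma\cdot R$ — precisely the results of \cite[Section 6]{DT1} on folding paths close to the orbit of a convex cocompact subgroup — one obtains constants so that for \emph{every} conjugacy class $\alpha$ and every $t\in\I$, within a window of size $N_0$ around $t$ the quantity $\ell(\alpha|\gamma(\cdot))$ is multiplied by at least $\lambda$ on one side. This is exactly the $(\lambda,N_0)$--flaring property and it gives the first claim. (2) For the refined lower bound near the minimum: let $t_\alpha$ be where $\ell(\alpha|\gamma(\cdot))$ is minimized. I would argue that $\alpha$ must be ``mostly legal'' at time $t_\alpha + N_1$ for a uniform $N_1$ (depending only on $K$ and $\rank(\free)$): indeed if $\alpha$ failed to be mostly legal over a long interval past $t_\alpha$, then its illegal turns would have to survive folding for a long time, which forces the folding path to stay near the relevant free factor / region of $\F$ for too long, contradicting the linear progress of $\pi\circ\gamma$ established above. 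Once $\alpha$ is mostly legal, its legal length — and hence its total length — grows by the standard exponential factor $e^{(t - (t_\alpha+N_1))}$ up to a multiplicative constant, giving $\ell(\alpha|\gamma(t)) \ge \tfrac{1}{c}e^{(t-t_\alpha)}\ell(\alpha|\gamma(t_\alpha))$ after absorbing $e^{N_1}$ and the mostly-legal constant into $c$. (3) Finally I would check the bound also holds trivially on the initial window $[t_\alpha, t_\alpha+N_1]$ by choosing $c$ large enough, since $\ell(\alpha|\gamma(t))\ge \ell(\alpha|\gamma(t_\alpha))$ there (as $t_\alpha$ is the minimum) and the exponential factor is bounded on that window.

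The main obstacle I anticipate is Step (2): controlling \emph{how long} a conjugacy class can remain ``not mostly legal'' along a folding path constrained to a neighborhood of $\Gamma\cdot R$. The heuristic is clean — illegal turns represent cancellation that the folding path is actively destroying, and a class that refuses to become mostly legal is detecting a free factor that the path is failing to move away from in $\F$ — but making it quantitative requires carefully invoking the structure theory of folding paths from \cite{BF14} (the behavior of illegal turns under folding, the relationship between the ``critical constant'' and distance traveled in $\F$) together with the uniform convex cocompactness constants. I would lean on \cite[Lemma 6.9]{DT1} and the surrounding results, which are designed for exactly this situation, to supply the uniform bound on $N_1$; the remaining arguments are then the routine exponential-growth estimates for legal length along folding paths. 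One should also be slightly careful that $\I$ may be infinite, but since all estimates are local in $t$ (involving only windows of uniformly bounded size and the single reference time $t_\alpha$) this causes no difficulty.
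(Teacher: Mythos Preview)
Your overall plan matches the paper's proof closely: flaring from \cite[Proposition~6.11]{DT1}, the trivial upper bound from the unit-speed geodesic property, and the lower bound obtained by first showing $\alpha$ becomes mostly legal within uniformly bounded time of $t_\alpha$ and then invoking exponential growth of legal length via \cite[Lemma~6.10]{DT1} and \cite[Corollary~4.8]{BF14}. Step~(3) of your plan is exactly how the paper handles the initial window as well.

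The one substantive divergence is your justification for the ``mostly legal in bounded time'' claim in Step~(2). Your proposed mechanism---that persistent illegal turns in $\alpha$ would force $\pi\circ\gamma$ to linger near some free factor in $\F$, contradicting linear progress---is not correct: there is no general link between illegal turns of an \emph{arbitrary} conjugacy class and proximity of $\gamma(t)$ to a particular free factor. (Such a link exists for \emph{short} classes, not for classes that merely have many illegal turns.) The paper instead leverages the flaring property itself. Since $t_\alpha$ is a minimizer, flaring forces $\ell(\alpha|\gamma(\cdot))$ to grow by a definite factor on a short interval just past $t_\alpha$; feeding this into the dichotomy inside the proof of \cite[Proposition~6.11]{DT1} rules out the branch in which illegal length dominates (that branch would force the length to \emph{decrease}), leaving the branch in which legal length already constitutes a definite fraction of total length. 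From there the decay of illegal length and growth of legal length from \cite[Lemmas~6.9--6.10]{DT1} give the uniform bound $s_\alpha - t_\alpha \le C$. So your citation of \cite[Lemma~6.9]{DT1} is on target, but the argument that unlocks it is ``flaring near the minimum $+$ the dichotomy of \cite[Proposition~6.11]{DT1}'' rather than $\F$--progress; swap that in and your proof is the paper's.
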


\begin{proof}
That $\gamma\colon \I \to \X$ has the $(\lambda,N_0)$--flaring property is exactly the conclusion of Proposition~6.11 of \cite{DT1} (note that the needed ``$A_0$--QCX'' hypothesis follows from \cite[Corollary 6.3]{DT1}). Also, the upper bound in the statement of the lemma follows immediately from the definition of a unit speed folding path (see \cite[Section 4]{BF14}), so we focus on the lower bound.

For the conjugacy class $\alpha$, let $s_\alpha$ be the infimum of times for which $\alpha$ is mostly legal (if such a time does not exist, set $s_\alpha$ to be the right endpoint of $\I$). We show that $s_\alpha - t_\alpha \le C$, for some constant $C$ not depending on $\alpha$. Then \cite[Lemma 6.10]{DT1} (which is an application of \cite[Corollary 4.8]{BF14}) implies that for $t\ge t_\alpha$
\begin{align*}
\ell(\alpha|\gamma(t)) &\ge \frac{1}{3}e^{t -s_\alpha} \mathrm{leg}(\alpha| \gamma(s_\alpha))\\
&\ge \frac{1}{6}e^{t -s_\alpha} \ell(\alpha| \gamma(s_\alpha)) \\
&= \frac{1}{6}e^{t -t_\alpha}e^{-(s_\alpha -t_\alpha)}\ell(\alpha| \gamma(s_\alpha))\\
&\ge \frac{1}{6e^{C}} \cdot e^{t -t_\alpha}\ell(\alpha| \gamma(t_\alpha)),
\end{align*}
as needed. Hence, it suffice to prove the uniform bound $s_\alpha - t_\alpha \le C$ over all nontrivial conjugacy classes $\alpha$. This will follow from applying the flaring property of the folding path $\gamma$; the idea is that if $\alpha$ is not mostly legal at some time $t_0$ then either the length of $\alpha$ decreases at some definite rate at $t_0$ (which is impossible if $t_0 = t_\alpha$), or after a bounded amount of time $\alpha$ becomes mostly legal. The details are slightly technical and our argument relies on the proof of Proposition $6.11$ of \cite{DT1}.

Since the image of $\gamma$ is contained in the $K$--neighborhood (with respect to $\dsym$) of $\Gamma \cdot R$, there is an $\epsilon >0$ depending only on $R\in \X$ and $K\ge 1$ such that $\gamma(\I) \subset \X_{\ge \epsilon}$, the $\epsilon$--thick part of $\X$. The flaring property then implies that there is a $M\ge1$ depending only on $\lambda$ and $\epsilon$ such that 
\begin{align}\label{eq:flare}
12 \le \ell(\alpha | \gamma(t_0 -N_0))\le \frac{1}{\lambda}\ell(\alpha | \gamma(t_0)),
\end{align}
for $t_0 =t_\alpha +M$. Hence, it suffices to bound the difference $s_\alpha -t_0$. According to the proof of Proposition $6.11$ of \cite{DT1} either $(1)$ $\mathrm{ilg}(\alpha|\gamma(t_0)) \ge \frac{\ell(\alpha|\gamma(t_0))}{2}$, or $(2)$ $\mathrm{ilg}(\alpha|\gamma(t_0)) < \frac{\ell(\alpha|\gamma(t_0))}{2}$ and $\mathrm{leg}(\alpha|\gamma(t_0)) > 0$. Here, $\mathrm{ilg}(\alpha|\gamma(t_0))$ is the illegal length of $\alpha$ as defined in Section $6$ of \cite{DT1}. (We note that the third case of \cite[Proposition 6.11]{DT1} does not arise since in that case $\ell(\alpha | \gamma(t_0)) < 6$.) 

In case $(1)$, Proposition $6.11$ shows that $\ell(\alpha|\gamma(t_0 -N_0)) \ge \lambda \cdot \ell(\alpha|\gamma(t_0))$, which directly contradicts \eqref{eq:flare}. Hence, we conclude that we are in the situation of case $(2)$ of \cite[Proposition 6.11]{DT1}, where it is shown that the legal length constitutes a definite fraction of the total length of $\alpha$ in $\gamma(t_0)$. In fact, there it is shown that
\[
\mathrm{leg}(\alpha|\gamma(t_0)) \ge \frac{\ell(\alpha|\gamma(t_0)) -6}{2(1+ \breve{m})}\ge \frac{\ell(\alpha|\gamma(t_0))}{4(1+ \breve{m})},
\]
where $\breve{m}$ is a constant depending only on the rank of $\free$. From this it follows easily that $s_\alpha - t_0$ is uniformly bounded (e.g., \cite[Lemmas 6.9--6.10]{DT1} show that illegal length decays at a definite rate whereas legal length grows at a definite rate). This completes the proof of the lemma.
\end{proof}

The companion to \Cref{lem:eventual_growth} is the following proposition, which states that we can extract folding rays in $\X$ which stay uniformly close to the orbit of $\Gamma$, have the required flaring property, and limit to free, arational trees in $\partial \X$. Most of this follows from the main technical work in \cite{DT1} on stable quasigeodesics in $\X$. Recall we have fixed $R \in \X$ with a petal labeled by $x$.

\begin{proposition}[Folding rays to infinity]\label{lem:folding_rays}
For any $k, \lambda \ge1$ there are $M, K\ge0$ such that if $(g_i)_{i\ge0}$ is a $k$--quasigeodesic ray in $\Gamma$, then there is an infinite length folding ray $\gamma\colon \I \to \X$ parameterized at unit speed with the following properties:
\begin{enumerate}
\item The sets $\gamma(\I)$ and $\{g_iR : i\ge 0\}$ have symmetric Hausdorff distance at most $K$.
\item The rescaled folding path $G_t = e^{-t}\cdot \gamma(t)\in\cv$ converges to the arational tree $T \in \partial\cv$ with the property that $\lim_{i\to \infty}g_i x= \partial \pi(T)$ in $\F \cup \partial \F$, where $\partial \pi(T)$ is the projection of the projective class of $T$ to the boundary of $\F$. Moreover, the action $\free \curvearrowright T$ is free.
\item The folding path $\gamma$ has the $(\lambda,M)$ flaring property.
\end{enumerate}
\end{proposition}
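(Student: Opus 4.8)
The plan is to build the folding ray $\gamma$ as a limit of the finite-length standard geodesics produced in \cite{DT1} between points in the orbit $\Gamma\cdot R$, and then to identify its limit point in $\partial\X$ using the boundary map $\partial\pi$ from \Cref{th:factor_complex_boundary}. More precisely, since $(g_i)_{i\ge 0}$ is a $k$--quasigeodesic ray in $\Gamma$, the convex cocompactness assumption (\Cref{conv:main}) together with the stability results of \cite{DT1} (the ``QCX'' / stable-quasigeodesic machinery used in the proof of \Cref{th: DT1}) give a constant $K\ge 0$, depending only on $k$ and $R$, such that for each $n$ there is a standard geodesic $\gamma_n\colon[0,\ell_n]\to\X$ from $g_0R$ to $g_nR$ whose image lies in the symmetric $K$--neighborhood of $\{g_iR : i\ge 0\}$ and which fellow-travels the orbit; after discarding the initial rescaling segment we may take $\gamma_n$ to be a folding path. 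Passing to a subsequence (using properness of $\X$ and the $\epsilon$--thickness forced by staying in $\N_K(\Gamma\cdot R)$), the $\gamma_n$ converge uniformly on compact sets to an infinite-length folding ray $\gamma\colon[0,\infty)\to\X$, still at unit speed and still contained in $\N_K(\{g_iR : i\ge0\})$. The lower Hausdorff bound—that every $g_iR$ lies within bounded symmetric distance of $\gamma(\I)$—follows from the fellow-traveling in the other direction, so item (1) holds after possibly enlarging $K$.

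For item (2): by \cite[Theorem~9.3]{BF14}, the projection $\pi(\gamma([0,t]))$ stays uniformly Hausdorff-close to an $\F$--geodesic from $\pi(g_0R)$ to $\pi(\gamma(t))$; combined with item (1) and the fact that $\pi$ is coarsely Lipschitz, the sets $\pi(\gamma(\I))$ and $\{g_i x : i\ge 0\}$ have finite Hausdorff distance in $\F$. Since the orbit map $g\mapsto g x$ is a quasi-isometric embedding, $\{g_i x\}$ is an unbounded quasigeodesic converging to the point $z_\infty := \lim_i g_i x=\kappa(z)\in\partial\F$; hence $\pi(\gamma(t))\to z_\infty$ in $\F\cup\partial\F$ as $t\to\infty$. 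Now let $T\in\osbar$ be any subsequential limit of the rescaled path $G_t = e^{-t}\gamma(t)$ (such a limit exists by compactness of $\osbar$). By the second bullet of \Cref{th:factor_complex_boundary}, if $T\notin\mathcal{AT}$ then $\pi(G_{t})=\pi(\gamma(t))$ would stay bounded in $\F$, contradicting $\pi(\gamma(t))\to z_\infty\in\partial\F$. Therefore $T\in\mathcal{AT}$, and the first bullet of \Cref{th:factor_complex_boundary} forces $\partial\pi(T)=z_\infty$. Since $z_\infty\in\partial\Gamma\subset\partial\F$ is the limit of the purely atoroidal convex cocompact group $\Gamma$, the arational tree $T$ is \emph{free}: a non-free arational tree is dual to a lamination on a once-punctured surface and is fixed (projectively) by an infinite-order toroidal element, which cannot arise as a limit of $\Gamma$ because every infinite-order element of $\Gamma$ is atoroidal and fully irreducible (alternatively, this freeness is exactly the content of the corresponding step in \cite[Section 6]{DT1}). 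This also shows the limit $T$ is independent of the subsequence up to the equivalence $\approx$, so $G_t\to T$ in $\osbar$ along the whole ray.

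Item (3) is now immediate: $\gamma$ is a unit-speed folding path contained in $\N_K(\Gamma\cdot R)$, so \Cref{lem:eventual_growth} (with the given $\lambda$) supplies a constant $N_0=N_0(K,\lambda)$ such that $\gamma$ has the $(\lambda,N_0)$--flaring property; set $M := N_0$. Tracking the dependencies, $K$ depends only on $k$ and the fixed data ($R$, $\Gamma$), and $M$ depends only on $\lambda$ and $K$, hence only on $k$ and $\lambda$, as claimed. The main obstacle is the extraction and stability argument in the first paragraph: one must invoke the precise stable-folding-path results of \cite{DT1} to guarantee that standard geodesics between orbit points stay in a uniform neighborhood of the orbit and that this neighborhood bound survives passage to the limiting infinite ray—this is where convex cocompactness is used in an essential way, and it is the step that genuinely goes beyond the cyclic case of \cite{KapLusCT}. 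The identification of the limit tree via $\partial\pi$ and the freeness are then comparatively formal given \Cref{th:factor_complex_boundary} and the atoroidality hypothesis.
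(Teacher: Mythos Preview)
Your overall architecture---extract a limiting folding ray from standard geodesics between orbit points, then identify the limit tree via \Cref{th:factor_complex_boundary}---is exactly the paper's approach, and items (1) and (3) are handled essentially as the paper does. However, two steps in your treatment of item (2) are genuinely incomplete.

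First, convergence of $G_t$: you obtain subsequential limits by compactness of $\osbar$ and then argue that all such limits lie in the same $\approx$--class, concluding ``so $G_t\to T$ in $\osbar$ along the whole ray.'' This inference is invalid: an $\approx$--equivalence class can contain many distinct (even non-homothetic) trees, so agreement of the $\approx$--class does not yield convergence in $\cvbar$. The paper instead invokes a structural fact about folding paths: the rescaled path $G_t=e^{-t}\gamma(t)$ has folding maps that are isometric on edges, and by \cite{HMaxes} such a path genuinely converges in $\cvbar$. You need this (or an equivalent) to get an actual limit tree rather than a mere projective/$\approx$--class.

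Second, freeness of $T$: your argument that a non-free arational tree is a surface tree ``fixed (projectively) by an infinite-order toroidal element'' does not connect to the hypothesis. The element acting elliptically on a surface tree is the peripheral curve, which lies in $\free$, not in $\Gamma$; pure atoroidality of $\Gamma$ says nothing about this. The paper's proof of freeness is quite different and uses item (3) in an essential way: for each nontrivial $\alpha$ one computes $\ell_T(\alpha)=\lim_{t\to\infty}e^{-t}\ell(\alpha\vert\gamma(t))$ and bounds this below by $\tfrac{1}{c}e^{-t_\alpha}\ell(\alpha\vert\gamma(t_\alpha))>0$ using \Cref{lem:eventual_growth}. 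So in the paper the logical order is: establish flaring (item (3)) first, then deduce freeness from it---the reverse of what you attempt.
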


\begin{proof}
By Theorem $5.5$ of \cite{DT1}, the orbit $\Gamma \cdot R$ is quasiconvex; hence, there is a $K\ge0$ depending only on $k\ge0$ (and the quasi-isometry constants of the orbit map $\Gamma \to \F$) such that any geodesic of $\X$ joining points of $(g_iR)_{i\ge0}$ is contained in the symmetric $K$--neighborhood of the quasigeodesic $ (g_iR)_{i\ge0}$ (note that $\Gamma$ is word-hyperbolic and moreover qi-embedded into $\os$ by \cite[Lemma 6.4]{DT1}). Let $\gamma_i$ be a standard geodesic of $\X$ joining $g_0R$ to $g_iR$. Since this collection of geodesics begins at $g_0R$ and remains in a symmetric $K$--neighborhood of $(g_iR)_{i\ge0}$, the Arzela--Ascoli theorem implies that (after passing to a subsequence) the $\gamma_i$ converge uniformly on compact sets to a geodesic ray $\gamma\colon \I \to \X$, which is also contained in the symmetric $K$--neighborhood of $ (g_iR)_{i\ge0}$. Hence, the geodesic $\gamma$ is contained in $\X_{\ge \epsilon}$ for $\epsilon$ depending only on $K$. As in the proof of Lemma $6.11$ of \cite{bestvina2012boundary}, we see that except for some initial portion of $\gamma$ of uniformly bounded size, $\gamma$ is a folding path. Hence, up to increasing $K$ by a bounded amount, this completes the proof of item $(1)$.

To prove $(2)$, let $\xi$ denote the limit of $(g_i x)_{i\ge0}$ in $\partial \F$. Note that the rescaled folding path $G_t = e^{-t}\cdot \gamma(t)$ is isometric on edges and hence converges to a tree $T \in \cvbar$ \cite{HMaxes}. Hence $\gamma(t)$ converges to the projective class of $T$ in $\overline{\X}$ as $t \to \infty$ and by item $(1)$ 
\[
\lim_{t\to \infty} \pi (\gamma(t)) = \lim_{i \to \infty} \pi(g_iR) = \lim_{i \to \infty} g_i x =\xi,
\]
in $\F \cup \partial \F$. Hence, the tree $T$ is arational and $\partial \pi (T)= \xi \in \partial \F$ by \Cref{th:factor_complex_boundary}.
To compete the proof of $(2)$, it only remains to show that the tree $T$ has a free $\free$--action. This will follow using item $(3)$, which we note follows immediately from Proposition $6.11$ of \cite{DT1}.

To see that $\free \curvearrowright T$ is free, it suffices to show that  $\ell_T(\alpha) >0$ for each $\alpha \in \free \setminus \{1\}$. Since $\lim_{t \to \infty} G_t = T$, we see using $(3)$ and \Cref{lem:eventual_growth} that
\begin{align*}
\ell_T(\alpha) &= \lim_{t \to \infty}\ell(\alpha|G_t) \\
&=  \lim_{t \to \infty} e^{-t} \cdot \ell(\alpha|\gamma(t))\\
& \ge \lim_{t \to \infty} e^{-t} \cdot \frac{ e^{(t - t_\alpha)}}{c} \ell(\alpha|\gamma(t_\alpha))  \\
& = \frac{1}{ce^{ t_\alpha}} \cdot  \ell(\alpha|\gamma(t_\alpha))\\
& >0.
\end{align*}
This completes the proof.
\end{proof}

\Cref{lem:folding_rays}, together with the fact that every fully irreducible element of $\Out(\free)$ acts on $\F$ as a loxodromic isometry, immediately implies:

\begin{corollary}\label{cor:phi}
Let $\phi\in\Gamma$ be an element of infinite order. Then for the point $z=\phi^{\infty}\in \partial\Gamma$ we have $T_z=T_\phi$. That is, $z$ is mapped under the map $\partial \Gamma\to\partial\F$ to the $\approx$--equivalence class $[T_\phi] \in \partial \F$, where $T_\phi$ is the stable tree of $\phi$.
\end{corollary}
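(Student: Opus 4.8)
The plan is to feed the element $\phi$ into \Cref{lem:folding_rays} and then match the resulting limit tree with the stable tree $T_\phi$ inside $\partial\F$. Since $\phi$ is an infinite-order element of the convex cocompact subgroup $\Gamma$, it is fully irreducible (as recalled in the introduction), so by \Cref{th:BFhyp} it acts as a loxodromic isometry on $\F$; and since $\Gamma$ is purely atoroidal, $\phi$ is atoroidal, so by \Cref{sect:fully-irred} the stable tree $T_\phi$ is free and arational. As $\Gamma$ is word-hyperbolic and $\phi$ has infinite order, the sequence $g_i\colonequals\phi^i$ ($i\ge 0$) is a quasigeodesic ray in $\Gamma$ converging to $z=\phi^\infty\in\partial\Gamma$.

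Applying \Cref{lem:folding_rays} to this quasigeodesic ray produces a folding ray $\gamma$ whose rescaling $e^{-t}\gamma(t)$ converges in $\overline\X$ to a free arational tree $T$ with $\partial\pi(T)=\lim_{i\to\infty}\phi^i x$ in $\F\cup\partial\F$. By \Cref{conv:main} and the fact (\Cref{th:factor_complex_boundary}) that $\partial\pi$ descends to a homeomorphism $\mathcal{AT}/{\approx}\to\partial\F$, this $T$ is a representative of the $\approx$--class $T_z$ and $\kappa(z)=\partial\pi(T)$. On the other hand, since $\phi$ acts loxodromically on the hyperbolic space $\F$, the orbit $(\phi^i x)_{i\ge0}$ converges to the attracting fixed point $\phi_+\in\partial\F$ of $\phi$; hence $\kappa(z)=\partial\pi(T)=\phi_+$. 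It therefore suffices to show $\partial\pi(T_\phi)=\phi_+$, for then injectivity of $\partial\pi$ on $\mathcal{AT}/{\approx}$ forces $T\approx T_\phi$, which is exactly the assertion $T_z=T_\phi$ (and that $z$ maps to $\partial\pi(T_\phi)=[T_\phi]$ under $\partial\Gamma\to\partial\F$).

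To identify $\partial\pi(T_\phi)$ with $\phi_+$, fix any $G_0\in\X$ and consider the sequence $\phi^i G_0$. By the north--south dynamics of the fully irreducible $\phi$ on compactified Outer space --- recall from \Cref{sect:fully-irred} that $[T_\phi]$ is the unique attracting fixed point of the left $\phi$--action on $\overline\X$, the repelling fixed point lying in $\partial\X$ and hence being avoided by $G_0\in\X$ --- we have $\phi^i G_0\to[T_\phi]$ in $\overline\X$. Since $T_\phi$ is arational, \Cref{th:factor_complex_boundary} gives $\pi(\phi^i G_0)\to\partial\pi(T_\phi)$ in $\F\cup\partial\F$. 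But $\pi\colon\X\to\F$ is coarsely $\Out(\free)$--equivariant (immediate from the definition of $\pi$ via proper subgraphs), so $\pi(\phi^i G_0)$ remains within bounded Hausdorff distance of $\phi^i\pi(G_0)$, a uniformly bounded--diameter subset of $\F$ that converges to $\phi_+$; therefore $\partial\pi(T_\phi)=\phi_+$, as needed. The only step that is not a purely formal combination of \Cref{lem:folding_rays} and \Cref{th:factor_complex_boundary} is this last identification, where one must pair the north--south dynamics of $\phi$ on $\overline\X$ with the coarse equivariance of the factor-complex projection; both ingredients are standard, so I expect no genuine obstacle.
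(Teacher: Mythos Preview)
Your proof is correct and follows essentially the same approach as the paper, which simply states the result as an immediate consequence of \Cref{lem:folding_rays} together with the fact that fully irreducible elements act loxodromically on $\F$. You have carefully unpacked what the paper leaves implicit---in particular the identification $\partial\pi(T_\phi)=\phi_+$ via north--south dynamics on $\overline\X$ and coarse equivariance of $\pi$---but the underlying strategy is the same.
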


Most of the work of \Cref{th:laminations_agree} is done with the following proposition.
\begin{proposition}\label{prop:limit_to_zero}
Let $(g_i )_{i\ge0}$ be a quasigeodesic sequence in $\Gamma$ converging to $z\in \partial \Gamma$. Then there is a $T_z \in \cvbar$ such that $z \mapsto \partial \pi_1(T_z)$ under $\partial \Gamma\to\partial\F$ with the property that for every $1\ne h\in\free$ we have
\[ \lim_{i \to \infty} \ell_{T_z}(g_ih) = 0.\]

\end{proposition}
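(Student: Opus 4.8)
The plan is to take $T_z$ to be the free arational tree delivered by \Cref{lem:folding_rays}. Applying that proposition to the given quasigeodesic $(g_i)_{i\ge 0}$ yields a unit-speed folding ray $\gamma\colon\I\to\X$ whose image lies within symmetric Hausdorff distance $K$ of the orbit $\{g_iR:i\ge 0\}$ and whose rescaling $G_t=e^{-t}\gamma(t)\in\cv$ converges in $\cvbar$ to a free arational tree $T_z$ with $\partial\pi(T_z)=\lim_i g_ix$ in $\F\cup\partial\F$. Since the orbit map $\Gamma\to\F$ is a quasi-isometric embedding and $g_i\to z$ in $\partial\Gamma$, this limit is exactly the point of $\partial\F$ identified with $z$ in \Cref{conv:main}; hence $z\mapsto\partial\pi(T_z)$, as required. (By \Cref{th:factor_complex_boundary} this determines $T_z$ up to $\approx$ and thus determines $L(T_z)$, though that is not needed here.) Everything then reduces to the length estimate, which I would extract from a short comparison of lengths along $\gamma$.

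Here is the comparison. Fix $1\ne h\in\free$ and write $\alpha_i$ for the conjugacy class $g_i(h)$. Because $\gamma(\I)$ and $\{g_iR\}$ are at symmetric Hausdorff distance $\le K$, for each $i$ there is $t_i\in\I$ with $\dsym(\gamma(t_i),g_iR)\le K$. The crucial point is that $\alpha_i$ has length bounded independently of $i$ at the point $\gamma(t_i)$. Indeed, the point $g_iR$ is ``adapted to $g_i$'': since the $\Out(\free)$--action on $\X$ satisfies $\ell(\phi(\alpha)|\phi\cdot G)=\ell(\alpha|G)$, we get $\ell(\alpha_i|g_iR)=\ell(g_i(h)|g_iR)=\ell(h|R)$; moving from $g_iR$ to the $K$--close point $\gamma(t_i)$ scales lengths by a factor at most $e^{\dsym(\gamma(t_i),g_iR)}\le e^K$, so $\ell(\alpha_i|\gamma(t_i))\le e^{K}\ell(h|R)$ for all $i$. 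On the other hand, the unit-speed property of the folding ray gives $\ell(\alpha_i|\gamma(t))\le e^{t-t_i}\ell(\alpha_i|\gamma(t_i))$ for $t\ge t_i$. Dividing by $e^t$ and letting $t\to\infty$, using $G_t\to T_z$ exactly as in the proof of \Cref{lem:folding_rays}, I obtain
\[
\ell_{T_z}(\alpha_i)=\lim_{t\to\infty}e^{-t}\ell(\alpha_i|\gamma(t))\le e^{-t_i}\ell(\alpha_i|\gamma(t_i))\le e^{K}\ell(h|R)\cdot e^{-t_i}.
\]
Finally $t_i\to\infty$: if not, a subsequence of $(t_i)$ is bounded, so the corresponding $\gamma(t_i)$ lie in a compact subset of $\X$, whence the $K$--close points $g_iR$ stay $\dsym$--bounded, contradicting $\dsym(g_0R,g_iR)\to\infty$ (which holds since $g_i\to z\in\partial\Gamma$ and $\Gamma\to(\X,\dsym)$ is a quasi-isometric embedding by \cite[Lemma 6.4]{DT1}). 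Combining this with the displayed bound yields $\lim_{i\to\infty}\ell_{T_z}(g_i(h))=0$.

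I do not expect a genuine obstacle in the argument above: all of the hard analysis --- keeping a folding ray near the orbit of $\Gamma$, the flaring property, and freeness of the limiting tree --- is already packaged into \Cref{lem:eventual_growth} and \Cref{lem:folding_rays}, which rest on the folding-path technology of \cite{DT1}. The conceptual content is the upgrade noted in the introduction: a soft compactness argument with geodesic currents gives only $\langle T_z,\mu\rangle=0$ for a limiting current $\mu$ built from the classes $g_i(h)$, which is strictly weaker than the uniform decay $\ell_{T_z}(g_i(h))\to 0$; the folding ray supplies this decay quantitatively (at rate $e^{-t_i}$) via the elementary unit-speed bound applied backwards from the point $\gamma(t_i)$ near $g_iR$. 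The two places that need care are bookkeeping the left/right conventions for the $\Out(\free)$--action on $\X$ so that $\ell(g_i(h)|g_iR)=\ell(h|R)$ comes out correctly, and using that $G_t\to T_z$ is honest convergence of translation-length functions (not merely projective convergence) --- which is precisely what the proof of \Cref{lem:folding_rays} establishes.
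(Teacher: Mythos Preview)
Your argument is correct and in fact streamlines the paper's own proof. Both proofs begin identically: invoke \Cref{lem:folding_rays} to obtain the folding ray $\gamma$ with $G_t=e^{-t}\gamma(t)\to T_z$ in $\cvbar$, pick $t_i\in\I$ with $\dsym(\gamma(t_i),g_iR)\le K$, and observe that $\ell(g_ih\mid\gamma(t_i))\le e^K\ell(h\mid R)$ and $t_i\to\infty$. The difference is in how one passes from $\ell(g_ih\mid\gamma(t_i))$ to $\ell_{T_z}(g_ih)$. The paper introduces the minimizing time $t_{g_ih}$ for $\ell(g_ih\mid\gamma(\cdot))$, proves a separate claim that $\lvert t_i-t_{g_ih}\rvert\le B$ uniformly (using the flaring property and both the upper and lower bounds of \Cref{lem:eventual_growth}), and then chains $e^{-t_{g_ih}}\ell(g_ih\mid\gamma(t_{g_ih}))\le e^{2B}e^{-t_i}\ell(g_ih\mid\gamma(t_i))$. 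You observe, correctly, that none of this is needed: the unit-speed bound $\ell(\alpha\mid\gamma(t))\le e^{t-s}\ell(\alpha\mid\gamma(s))$ holds for \emph{every} $s\le t$ (this is just $d_\X(\gamma(s),\gamma(t))=t-s$), not merely from the minimizer, so one can apply it directly with $s=t_i$ and pass to the limit. Your route therefore avoids the flaring property and the lower bound of \Cref{lem:eventual_growth} entirely at this stage (they are still used, of course, inside \Cref{lem:folding_rays} to establish freeness of $T_z$). The paper's detour buys nothing extra for this proposition; your version is simply cleaner.
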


\begin{proof}
Apply \Cref{lem:folding_rays} with $\lambda = 2$ to obtain the unit speed folding ray $\gamma\colon \I \to \X$ and denote by $G_t = e^{-t}\cdot \gamma(t)$ the rescaled folding path for which the associated folding maps are isometric on edges. Let $T_z$ be the limit of $G_t$ in $\cvbar$. By \Cref{lem:folding_rays}, the image of $z$ under the extension of the orbit map $\partial \Gamma \to \partial \F$ is $\partial \pi(T_z)$.

As in \Cref{lem:eventual_growth}, let $t_{g_ih}$ denote a time for which $g_i h$ has its length minimized along $\gamma(t)$. Also, for each $i \ge 0$ let $t_i$ denote a time for which the symmetric distance between $\gamma(t)$ and $g_iR$ is less than $K$. (Such a time exists by \Cref{lem:folding_rays}.) Hence, by definition of the symmetric distance on $\X$ we have 
\begin{align} \label{eq:lengths_close}
& e^{-K} \le \frac{\ell(\alpha|\gamma(t_i))}{\ell(\alpha|g_iR)} \le e^K,
\end{align}
for each conjugacy class $\alpha$. We will need the following claim:

\begin{claim*}\label{claim:times_agree}
There is a constant $B\ge0$ which is independent  of $i\ge0$ so that 
\[
\abs{t_i - t_{g_ih}} \le B.
\]
\end{claim*}
\begin{proof}[Proof of claim]
We will show that $B$ can be taken to be 
\[
\max \left \{2M+M\log_2 \frac{e^{M}e^{K} \ell(h\vert R)}{\epsilon},\; \log \left (\frac{c\cdot \ell(h|R)}{\epsilon} \right) +K \right \},
\]
where the constants $M,K,c$ are as in \Cref{lem:folding_rays}. To see this, first suppose that $t_i < t_{g_ih}$. Let $D = \floor{\frac{t_{g_ih}-t_i}{M}}$  so that $DM \le t_{g_ih}-t_i\le DM+M$ and consequently
\begin{equation}\label{eq:adjust_int_point}
\ell(g_ih\vert \gamma(t_{g_i h}-DM)) \le e^M \ell(g_i h\vert \gamma(t_i))
\end{equation}
since $\gamma$ is a directed geodesic. As in \Cref{lem:folding_rays} let $\epsilon$ be the length of the shortest loop appearing along the folding path $\gamma$. 
Then by definition of $t_{g_ih}$ we have
\[\ell(g_i h\vert \gamma(t_{g_ih} - M)) \ge \ell(g_i h\vert \gamma(t_{g_i h}))\ge \epsilon.\]
Applying the $(2,M)$--flaring condition inductively at times $t_{g_ih}-M, t_{g_ih}-2M, \ldots, t_{g_ih}-DM $, we find that
\[\ell(g_ih\vert \gamma(t_{g_ih}-DM)) \ge 2^{D-1}\ell(g_ih\vert\gamma(t_{g_ih}-M)) \ge 2^{D-1}\epsilon.\]
Combining with \Cref{eq:adjust_int_point} and rearranging gives
\[\frac{t_{g_ih}-t_i}{M} -2 \le D-1 \le \log_2 \frac{e^M \ell(g_i h\vert \gamma(t_i))}{\epsilon}.\]
Applying \Cref{eq:lengths_close} and isolating $t_{g_ih}-t_i$ now gives the desired bound 
\[t_{g_ih}-t_i  \le 2M + M\log_2 \frac{e^Me^K\ell(g_ih\vert g_i R))}{\epsilon} = 2M+M\log_2 \frac{e^{M}e^{K} \ell(h\vert R)}{\epsilon}.\]

Now suppose that $t_{g_ih} \le t_i$. Applying \Cref{lem:eventual_growth} to the conjugacy class $\alpha = g_i h$ and using \Cref{eq:lengths_close} then yields
\begin{align*}
e^K &\ge \frac{\ell(g_ih|\gamma(t_i))}{\ell(g_ih|g_iR)}\\
&\ge \frac{\ell(g_ih|\gamma(t_{g_ih}))}{c \cdot \ell(h|R)} e^{(t_i - t_{g_ih})}\\
&\ge  \frac{\epsilon}{c \cdot \ell(h|R)} e^{(t_i - t_{g_ih})}.
\end{align*}
One final rearrangement then gives the claimed bound
\begin{align*}
t_i - t_{g_ih} &\le K + \log \frac{c\cdot \ell(h|R)}{\epsilon} \qedhere
\end{align*}
\end{proof}

We next observe that $t_i \to \infty$ as $i\to \infty$. To see this, recall that the orbit map $g\mapsto gR$ gives a quasi-isometric embedding $\Gamma\to\os$ (see, e.g., \cite[Lemma 6.4]{DT1}). Since $(g_i)_{i\ge 0}$ is a geodesic in $\Gamma$, this implies $d_\os(g_0R,g_iR)\to \infty$ as $i\to\infty$. Therefore
\begin{equation}\label{eq:to_infinity}
t_i = d_{\X}(\gamma(0),\gamma(t_i)) \ge d_{\X}(g_0R,g_iR) - 2K \to\infty
\end{equation}
as $i\to\infty$, as claimed.

Finally, we can now compute
\begin{align*}
\lim_{i\to \infty}\ell_T(g_ih) &= \lim_{i \to \infty} \lim_{t \to \infty} \ell(g_ih\vert G_t) & \\
&= \lim_{i\to \infty} \lim_{t \to \infty} e^{-t} \cdot \ell(g_ih\vert \gamma(t)) &   \\
&\le \lim_{i\to \infty} \lim_{t\to\infty}e^{-t} (e^{(t-t_{g_ih})}\ell(g_ih\vert \gamma(t_{g_ih})) & (\Cref{lem:eventual_growth}) \\
&= \lim_{i \to \infty} e^{-t_{g_ih}}\cdot \ell(g_ih\vert \gamma(t_{g_ih})) &  \\
&\le \lim_{i\to \infty} e^{2B}e^{-t_i} \cdot \ell(g_ih\vert \gamma(t_i)) & (Claim~\ref{claim:times_agree})  \\
&= \lim_{i\to \infty}e^{2B+K}e^{-t_i} \cdot \ell(g_ih\vert g_iR) & (\Cref{eq:lengths_close})   \\
&=e^{2B+K}\ell(h|R) \cdot  \lim_{i\to \infty} e^{-t_i}  &  \\
&= 0 & (\Cref{eq:to_infinity}). 
\end{align*}
This completes the proof of the proposition.
\end{proof}

\begin{remark}[Unique ergodicity of $T_z$]
Although we will not need this fact, we note that Namazi--Pettet--Reynolds have recently shown that under the assumption that the orbit map $\Gamma \to \F$ is a quasi-isometric embedding, each equivalence class of trees appearing the image of $\partial \Gamma$ in $\partial \F$ is uniquely ergodic \cite{NPR}. (See \cite{NPR} and the references therein for a discussion of the various notions of unique ergodicity.) In particular, in the statement of \Cref{prop:limit_to_zero} (and therefore \Cref{th:laminations_agree}), the tree $T_z$ is unique up to rescaling.
\end{remark}

We can now conclude the proof of \Cref{th:laminations_agree}.
\begin{proof}[Proof of \Cref{th:laminations_agree}]
Choose a free basis $X$ of $\free$.
Let $z\in \partial \Gamma$ and let $(g_n)_{n=1}^\infty$ be a geodesic ray from $1$ to $z$ in the Cayley graph of $\Gamma$.

Let $(p,q)\in \Lambda_z$ be an arbitrary leaf of  $\Lambda_z$.  Then there is $1\ne h\in\free$ such that $(p,q)\in \Lambda_{z,h}$. For every $n\ge 1$ let $w_n$ be the cyclically reduced form of $g_n(h)$ over $X^{\pm 1}$. 
Let $\gamma$ be the bi-infinite geodesic from $p$ to $q$ in $\ca(\free,X)$. Let $v$ be the label of some finite subsegment of $\gamma$.  By \Cref{rem:Lambda_z}, there exists an infinite sequence $n_i\to\infty$ such that for all $i\ge 1$ $v$ is a subword of a cyclic permutation of $w_{n_i}^{\pm 1}$. By \Cref{prop:limit_to_zero}, we see that $\lim_{i \to \infty} \ell_{T_z}(w_{n_i}) = 0$.  Thus for every $\epsilon>0$ there exists a cyclically reduced word $w$ over $X^{\pm 1}$ with $\ell_{T_z}(w)\le \epsilon$ such that $v$ is a subword of $w$. Since $v$ was the label of an arbitrary subsegment of the geodesic from $p$ to $q$, by \Cref{rem:L(T)} it follows that $(p,q)\in L(T_z)$.  As $(p,q)\in \Lambda_z$ was arbitrary, we conclude that $\Lambda_z \subset L(T_z)$.

Since the orbit map $\Gamma\to\F$ is a quasi-isometric embedding, this orbit map extends to a $\Gamma$--equivariant injective continuous map $\partial \Gamma\to\partial\F$. Thus for any distinct $z_1,z_2\in\partial \Gamma$ we have $T_{z_1}\not\approx T_{z_2}$ and therefore, by \Cref{prop:CHR}, there do not exist $p,q,q'\in \partial \free$ such that $(p,q)\in L(T_{z_1})$ and $(p,q')\in L(T_{z_2})$. Since  $\Lambda=\cup_{z\in\partial \Gamma}\Lambda_z$ is diagonally closed by \Cref{th:Mj_lam}, it now follows that for every $z\in \partial \Gamma$ the lamination $\Lambda_z$ is diagonally closed. 

Let $z\in \partial \Gamma$ be arbitrary. \Cref{prop:keyQ} implies that $L(T_z)$ is diagonally closed. Since $T_z$ is free and arational, \cite[Theorem A]{CHR11} (see also \cite[Proposition~4.2]{bestvina2012boundary}) implies that $L(T_z)$ possesses a unique minimal sublimation and that $L(T_z)$ is obtained from this minimal sublamination by adding diagonal leaves. Therefore the only diagonally closed sublamination of $L(T_z)$ is $L(T_z)$ itself. We have already established that $\Lambda_z \subseteq L(T_z)$. Since $\Lambda_z$ is an algebraic lamination on $\free$ (see \Cref{rem:Lambda_z}) and since $\Lambda_z$ is diagonally closed, it follows that $\Lambda_z = L(T_z)$, as required.
\end{proof}

\section{Fibers of the Cannon--Thurston map}

Recall (c.f. \Cref{conv:main}) that we have fixed a convex cocompact subgroup $\Gamma\le\Out(\free)$ for which the extension group $E_\Gamma$ is hyperbolic. The short exact sequence $1\to \free\to E_\Gamma\to\Gamma\to 1$ thus gives rise to a surjective Cannon--Thurston map denoted $\ct\colon \partial\free\to\partial E_\Gamma$. We write $\deg(y)$ for the cardinality $\#\left((\ct)^{-1}(y)\right)$ of the fiber over $y\in \partial E_\Gamma$ and call this the \define{degree} of $y$.
In this section we use \Cref{th:laminations_agree} to describe the fibers of the Cannon--Thurston map. The key technical observation is the following.

\begin{lemma}
\label{lem:fiber_description}
Suppose $y\in \partial E_\Gamma$ has $\deg(y)\ge 2$. Then there is a unique point $z\in \partial \Gamma$ and a point $c\in T_z$ so that $(\ct)^{-1}(y) = \mathcal{Q}_{T_z}^{-1}(c)$. Moreover, for any $p\in (\ct)^{-1}(y)$ we have that $p$ is proximal for $L(T_z)$ and that 
\[(\ct)^{-1}(y) =\mathcal{Q}_{T_z}^{-1}(c) = \{p\}\cup\{q\in \partial \free \mid (p,q)\in L(T_z)\}.\]
\end{lemma}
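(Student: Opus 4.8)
The plan is to exploit the combination of Mitra's description of Cannon--Thurston fibers (\Cref{th:Mj_lam}), the identification $\Lambda_z = L(T_z)$ from \Cref{th:laminations_agree}, the $\mathcal{Q}$--map machinery (\Cref{prop:keyQ}, \Cref{prop:Qmap}), and the ``no shared endpoints'' property for distinct arational trees (\Cref{prop:CHR}). First I would fix $p \neq p'$ in $(\ct)^{-1}(y)$, which exist since $\deg(y) \geq 2$. By \Cref{th:Mj_lam} we have $(p,p') \in \Lambda_z$ for some $z \in \partial\Gamma$, and by \Cref{th:laminations_agree} this means $(p,p') \in L(T_z)$; then \Cref{prop:keyQ} gives $\mathcal{Q}_{T_z}(p) = \mathcal{Q}_{T_z}(p') =: c \in \hat T_z$. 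The first task is \emph{uniqueness of $z$}: if $z' \in \partial\Gamma$ also satisfied $(p, q) \in L(T_{z'})$ for some $q$, then since $z \neq z'$ implies $T_z \not\approx T_{z'}$ (the orbit map $\Gamma \to \F$ is a quasi-isometric embedding, so $\partial\Gamma$ injects into $\mathcal{AT}/\!\approx$ via \Cref{th:factor_complex_boundary}), \Cref{prop:CHR} would be contradicted. This simultaneously shows that for \emph{every} pair of distinct points in $(\ct)^{-1}(y)$, the associated boundary point of $\Gamma$ is this same $z$.

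Next I would prove the set equality $(\ct)^{-1}(y) = \mathcal{Q}_{T_z}^{-1}(c)$. For the inclusion $\subseteq$: any $q \in (\ct)^{-1}(y)$ with $q \neq p$ has $(p, q) \in \Lambda$, hence $(p,q) \in L(T_{z''})$ for some $z''$; but by the uniqueness just established $z'' = z$, so $\mathcal{Q}_{T_z}(q) = \mathcal{Q}_{T_z}(p) = c$. For the reverse inclusion $\supseteq$: if $\mathcal{Q}_{T_z}(q) = c = \mathcal{Q}_{T_z}(p)$ with $q \neq p$, then \Cref{prop:keyQ} gives $(p,q) \in L(T_z) = \Lambda_z \subseteq \Lambda$, so by \Cref{th:Mj_lam} $\ct(q) = \ct(p) = y$. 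Combined with $\mathcal{Q}_{T_z}(p) = c$ this yields $(\ct)^{-1}(y) = \mathcal{Q}_{T_z}^{-1}(c)$; in particular $\#\mathcal{Q}_{T_z}^{-1}(c) = \deg(y) \geq 2$. The displayed formula $\mathcal{Q}_{T_z}^{-1}(c) = \{p\}\cup\{q : (p,q)\in L(T_z)\}$ is then immediate from \Cref{prop:keyQ}, valid for any $p \in (\ct)^{-1}(y)$.

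It remains to locate $c$ and establish proximality. Since $\#\mathcal{Q}_{T_z}^{-1}(c) \geq 2 > 1$, part (2) of \Cref{prop:Qmap} forces $c \notin \partial T_z$, so $c \in \overline{T_z}$. Then part (1) of \Cref{prop:Qmap} says every point of $\mathcal{Q}_{T_z}^{-1}(c) = (\ct)^{-1}(y)$ is proximal for $L(T_z)$, giving the ``moreover'' clause. The only remaining subtlety is the assertion $c \in T_z$ rather than merely $c \in \overline{T_z}$ (i.e., $c$ lies in the tree itself, not just its metric completion); this should follow from the fact that $T_z$ is free, so the $\free$--action on $\overline{T_z}$ is free and the (countably many) branch points and limit points adjoined in the completion can be handled by the finiteness statements in \Cref{prop:Qmap}(3)--(4) together with $\free$--equivariance of $\mathcal{Q}_{T_z}$ --- alternatively one may simply weaken the statement to $c \in \overline{T_z}$, which is all that is used downstream. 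I expect the main obstacle to be precisely this last point --- pinning down that $c$ is an actual point of $T_z$ --- whereas the bulk of the argument is a clean bookkeeping exercise chaining together the cited results; the uniqueness of $z$ via \Cref{prop:CHR} is the conceptual heart.
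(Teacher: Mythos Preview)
Your proposal is correct and follows essentially the same approach as the paper: use \Cref{th:Mj_lam} and \Cref{th:laminations_agree} to place each pair of fiber points in some $L(T_z)$, invoke the injectivity of $\partial\Gamma\to\partial\F$ together with \Cref{prop:CHR} to pin down a unique $z$, and then use \Cref{prop:keyQ} to identify the fiber with a $\mathcal{Q}_{T_z}$--fiber. Two minor remarks: the paper obtains proximality more directly by observing that any endpoint of a leaf of $L(T_z)$ is automatically proximal (immediate from \Cref{defn:proximal}), bypassing your appeal to \Cref{prop:Qmap}(1)--(2); and your worry about $c\in T_z$ versus $c\in\overline{T_z}$ is well-placed, since the paper's own proof in fact only establishes $c\in\hat T_z$, which is all that is needed downstream.
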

\begin{proof}
Recall that since the orbit map $\Gamma\to \mathcal F$ is a quasi-isometric embedding and since $\Gamma$ and $\mathcal F$ are Gromov-hyperbolic, we have a $\Gamma$--equivariant topological embedding $\kappa\colon \partial \Gamma\to\partial F$. Thus to every $z\in\partial \Gamma$ we have an associated point $\kappa(z)\in \partial \mathcal F$ which is represented by an equivalence class of an arational tree $T_z\in \cvbar$. Moreover, by \Cref{th:laminations_agree}, for every $z\in  \partial \Gamma$ the action of $\free$ on $T_z$ is free and $\Lambda_z=L(T_z)$.
Note that since $\kappa$ is injective, \Cref{prop:CHR} implies that for every $p\in \partial \free$ there is at most one point $z\in\partial\Gamma$ for which $L(T_z)$ contains a leaf of the form $(p,q)$.

Suppose now that $\deg(y)=m\ge 2$, so that $(\ct)^{-1}(y)=\{p_1,\dots, p_m\}\subseteq \partial \free$ consists of $m\ge 2$ distinct points.
By \Cref{th:Mj_lam} and \Cref{th:laminations_agree}, we find that for each pair $1\le i < j\le m$ there is some $z_{ij}\in \partial \Gamma$ so that $(p_i,p_j) \in \Lambda_{z_{ij}} = L(T_{z_{ij}})$. The above observation (regarding \Cref{prop:CHR} and the injectivity of $\kappa$) shows there is in fact a unique such $z\in \partial\Gamma$; hence we have $(p_i,p_j)\in L(T_z)$ for all $1\le i<j\le m$.
This proves that for each $1\le i \le m$ the fiber $(\ct)^{-1}(y)$ has the claimed form
\[(\ct)^{-1}(y) = \{p_i\} \cup \{q\in \partial \free \mid (p_i,q)\in L(T_z)\}.\]
Moreover, since $p_1,\dotsc,p_m$ are all endpoints of leafs of $L(T_z)$, it is now immediate from \Cref{defn:proximal} that each point of $(\ct)^{-1}(y)$ is proximal for $L(T_z)$. Finally,  by \Cref{prop:keyQ} there is a point $c\in \hat T_z$ such that  $\mathcal{Q}_{T_z}^{-1}(c)=\{p_1,\dots, p_m\} = (\ct)^{-1}(y)$.
This concludes the proof of the lemma.
\end{proof}

\begin{definition}[$\Gamma$--essential points]\label{defn:essential}
A point $y\in \partial E_\Gamma$ is said to be \emph{$\Gamma$--essential} if there exists $z\in \partial \Gamma$ such that $\ct(x)=y$ for some $x\in\partial\free$ proximal for $L(T_z)$ (see \Cref{defn:proximal}). In this case, there is a unique such point $z\in \partial\Gamma$, which we denote $\zeta(y)\colonequals z$. 
\end{definition}

\Cref{prop:prox,prop:CHR} show that a point $x\in \partial\free$ can be proximal for $L(T_z)$ for at most one point $z\in \partial\Gamma$. Thus $\zeta(y)$ is clearly uniquely determined for any $\Gamma$--essential point with $\deg(y) =1$. This together with \Cref{lem:fiber_description} shows that \Cref{defn:essential} is justified in asserting that $\zeta(y)$ is uniquely determined. We also note that every $y\in \partial E_\Gamma$ with $\deg(y)\ge 2$ is $\Gamma$--essential by \Cref{lem:fiber_description}.

\subsection{Bounding the size of fibers of the Cannon--Thurston map}

Using \Cref{lem:fiber_description}, the $\mathcal{Q}$--index theory for very small $\R$--trees now easily gives a uniform bound on the cardinality of any fiber of the Cannon--Thurston map $\ct\colon\partial\free\to\partial E_\Gamma$.

\begin{theorem}\label{thm:main}\label{th:main_1}
Let $\Gamma\le \Out(\free)$ be purely atoroidal and convex cocompact, where $\free$ is a free group of finite rank at least $3$, and let $\ct\colon\partial \free\to\partial E_\Gamma$ denote the Cannon--Thurston map for the hyperbolic $\free$--extension $E_\Gamma$. Then for every $y\in \partial E_\Gamma$, the degree $\deg(y) = \#\left((\ct)^{-1}(y)\right)$ of the fiber over $y$ satisfies
\[1 \le \deg(y) \le 2\rank(\free).\]
In  particular, the fibers Cannon--Thurston map are all finite and of uniformly bounded size.
\end{theorem}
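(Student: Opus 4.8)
The plan is to deduce \Cref{th:main_1} directly from \Cref{lem:fiber_description} together with \Cref{th:index-bound}. First I would dispose of the lower bound: since the Cannon--Thurston map $\ct\colon\partial\free\to\partial E_\Gamma$ is surjective (by \Cref{pd:mitra}, as $\free$ is infinite), every $y\in\partial E_\Gamma$ has $\deg(y)=\#((\ct)^{-1}(y))\ge 1$. So the content is the upper bound $\deg(y)\le 2\rank(\free)$.

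For the upper bound, fix $y\in\partial E_\Gamma$. If $\deg(y)=1$ then certainly $\deg(y)\le 2\rank(\free)$ since $\rank(\free)\ge 3$, so I may assume $\deg(y)\ge 2$. In this case \Cref{lem:fiber_description} provides a (unique) point $z\in\partial\Gamma$ and a point $c\in\hat T_z$ with $(\ct)^{-1}(y)=\mathcal{Q}_{T_z}^{-1}(c)$. By \Cref{conv:main} and \Cref{th:laminations_agree}, the tree $T_z\in\cvbar$ is free and arational; in particular $\free$ acts on $T_z$ freely and with dense orbits (arationality of a non-surface dual tree gives dense orbits, as recalled after the definition of arational trees). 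Hence $T_z$ is exactly the kind of tree to which the $\mathcal Q$--index theory of \cite{CoulboisHilion-index} applies. Since $\deg(y)=\#(\mathcal{Q}_{T_z}^{-1}(c))\ge 2$, the point $c$ contributes $\ind(c)=\#(\mathcal{Q}_{T_z}^{-1}(c))-2=\deg(y)-2$ to the $\mathcal Q$--index (this is $\ge 0$, so the $\max$ in the definition is irrelevant, and if $\deg(y)\ge 3$ then $c$ is one of the orbit representatives in the sum defining $\ind(T_z)$; if $\deg(y)=2$ the bound below is trivial anyway). Therefore
\[
\deg(y)-2 \;=\; \ind(c) \;\le\; \ind(T_z) \;\le\; 2\rank(\free)-2,
\]
where the last inequality is \Cref{th:index-bound} applied to the free tree $T_z$ with dense orbits. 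Rearranging gives $\deg(y)\le 2\rank(\free)$, which completes the proof; finiteness and the uniform bound are immediate from the displayed inequality $1\le\deg(y)\le 2\rank(\free)$.

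The only real subtlety is bookkeeping about when $c$ appears in the index sum: the $\mathcal Q$--index $\ind(T_z)$ sums $\ind(x)$ over $\free$--orbit representatives of points with $\#(\mathcal{Q}_{T_z}^{-1}(x))\ge 3$, so when $\deg(y)=2$ the term $\ind(c)=0$ simply is not part of the sum — but then $\deg(y)=2\le 2\rank(\free)$ holds trivially since $\rank(\free)\ge 3$, so nothing is lost. When $\deg(y)\ge 3$, $c$ genuinely contributes $\deg(y)-2>0$ and the chain of inequalities above is valid. I do not expect any serious obstacle here: all the hard work has been front-loaded into \Cref{th:laminations_agree} (to identify $\Lambda_z$ with the dual lamination $L(T_z)$ of a genuine free arational tree) and into \Cref{lem:fiber_description} (to realize fibers as $\mathcal Q$-preimages of a single point), after which \Cref{th:main_1} is a one-line consequence of Coulbois--Hilion's index bound.
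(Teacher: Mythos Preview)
Your proposal is correct and follows essentially the same argument as the paper: surjectivity of $\ct$ gives the lower bound, and for $\deg(y)\ge 2$ you invoke \Cref{lem:fiber_description} to realize the fiber as $\mathcal{Q}_{T_z}^{-1}(c)$ for a free arational tree $T_z$, then apply \Cref{th:index-bound} to obtain $\deg(y)-2=\ind(c)\le\ind(T_z)\le 2\rank(\free)-2$. Your extra bookkeeping about the $\deg(y)=2$ case (where $c$ need not appear in the index sum) is a harmless clarification; the paper's proof simply writes the same chain of inequalities without comment.
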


\begin{proof}
Fix any point $y\in \partial E_\Gamma$. Since the Cannon--Thurston map is surjective, we clearly have $(\ct)^{-1}(y)\neq \varnothing$. Thus $\deg(y)\ge 1$. If $\deg(y)=1$ there is nothing to prove, so assume $\deg(y) = m\ge 2$. By \Cref{lem:fiber_description}, there exists a free arational tree $T_z\in\cvbar$ and a point $c\in\hat{T_z}$ so that $(\ct)^{-1}(y) = \mathcal{Q}_{T_z}^{-1}(c)$.
\Cref{th:index-bound} then gives
\[m-2 = \ind(c) \le \ind(T_z)\le 2\rank(\free)-2.\qedhere\] 
\end{proof}

\subsection{Rational points and the Cannon--Thurston map}

A point in the boundary $\partial G$ of a word-hyperbolic group $G$ is said to be \emph{rational} if it is equal to the limit $g^{\infty} \colonequals \lim_{n\to\infty} g^n$ in $G\cup \partial G$ for some infinite order element $g\in G$. A point in $\partial G$ is \emph{irrational} if it is not rational. Our next result analyzes the fibers of the Cannon--Thurston map $\ct$ over rational points  of $\partial \free$ and $\partial E_\Gamma$.

\begin{theorem} \label{th:rational_fibers}
Suppose that $1 \to \free \to E_\Gamma \to \Gamma \to 1$ is a hyperbolic extension with $\Gamma \le \Out(\free)$ convex cocompact. 
Consider a rational point $g^\infty\in \partial E_\Gamma$, where $g\in E_\Gamma$ has infinite order.
\begin{enumerate}
\item Suppose that $g^k$ is equal to $w\in \free\lhd E_\Gamma$ for some $k\ge 1$ (i.e., $g$ projects to a finite order element of $\Gamma$). Then $(\ct)^{-1}(g^\infty) = \{w^\infty\}\subset \partial \free$ and so $\deg(g^\infty) = 1$.
\item Suppose that $g$ projects to an infinite-order element $\phi\in\Gamma$. Then there exists $k\ge 1$ such that the automorphism $\Psi\in\Aut(\free)$ given by $\Psi(w) = g^kwg^{-k}$ is forward rotationless (in the sense of \cite{FH11,CoulboisHilion-botany}) and its set $\mathrm{att}(\Psi)$ of attracting fixed points in $\partial \free$ is exactly $\mathrm{att}(\Psi)=(\ct)\inv(g^\infty)$. Moreover, $g^\infty$ is $\Gamma$--essential and $\zeta(g^\infty) = \phi^\infty$.
\end{enumerate}
\end{theorem}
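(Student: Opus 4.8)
The plan is to handle the two cases separately, using the explicit description of Cannon--Thurston fibers from \Cref{lem:fiber_description} together with the index-theoretic facts about stable trees collected in \Cref{prop:att} and \Cref{cor:attr}.

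For part (1), suppose $g^k = w \in \free$. Then $g^\infty = g^{k\infty} = w^\infty$ as points of $\partial E_\Gamma$, and $w^\infty\in\partial\free$ maps to $g^\infty$ under $\ct$ since $\ct$ extends the inclusion $\free\hookrightarrow E_\Gamma$ (this is the defining property of the Cannon--Thurston map in \Cref{pd:mitra}). Thus $w^\infty\in(\ct)^{-1}(g^\infty)$, and it remains to show this fiber is a singleton. If $\deg(g^\infty)\ge 2$, then by \Cref{lem:fiber_description} there is a unique $z\in\partial\Gamma$ and a point $c\in T_z$ with $(\ct)^{-1}(g^\infty) = \mathcal Q_{T_z}^{-1}(c)$; in particular $w^\infty$ is an endpoint of a leaf $(w^\infty, q)\in L(T_z)$ for some $q\ne w^\infty$. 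First I would observe that $g$ normalizes $\langle w\rangle$ up to finite index (since $g$ projects to a finite-order element of $\Gamma$, a power of $g$ commutes with $w$ modulo the center, but $\free$ is centerless, so a power of $g$ commutes with $w$), hence $g$ fixes the pair $\{w^{+\infty}, w^{-\infty}\}\subset\partial\free$ under the action $G\curvearrowright\partial\free$ of \Cref{prop:action}. By $E_\Gamma$--equivariance of $\ct$, the finite fiber $(\ct)^{-1}(g^\infty)$ is invariant under $\langle g\rangle$, which acts on it through a finite quotient; but $w$ acts on $\partial\free$ with north--south dynamics, and the subgroup $\langle w\rangle\le\langle g\rangle$ acting on the finite set $\mathcal Q_{T_z}^{-1}(c)$ must act trivially, forcing all points of the fiber to be fixed points of $w$ in $\partial\free$, i.e.\ the fiber is contained in $\{w^{+\infty}, w^{-\infty}\}$. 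Since $\{w^{+\infty},w^{-\infty}\}$ is a leaf of $L(w)$ but $T_z$ is free and arational (so $L(w)\cap L(T_z)$ cannot contain such a leaf, as $w$ has positive translation length in $T_z$), we would get $(w^{+\infty},w^{-\infty})\notin L(T_z)$, and since the fiber has $\ge 2$ points it must be exactly $\{w^{+\infty}, w^{-\infty}\}$ with $(w^{+\infty},w^{-\infty})\in L(T_z)$ --- a contradiction. Hence $\deg(g^\infty)=1$.

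For part (2), suppose $g$ projects to an infinite-order element $\phi\in\Gamma$, which is fully irreducible (convex cocompactness) and atoroidal (pure atoroidality). By \cite[Proposition~3.3]{CoulboisHilion-botany} there is $k\ge 1$ with $\phi^k$ forward rotationless; replacing $g$ by $g^k$, I would arrange that $\Psi\colonequals\Phi_{g^k}\in\Aut(\free)$, $w\mapsto g^kwg^{-k}$, represents the FR element $\phi^k$, so $\Psi$ is forward rotationless. Now I claim $(\ct)^{-1}(g^\infty) = \mathrm{att}(\Psi)$. By \Cref{cor:phi}, the point $\zeta\colonequals\phi^\infty\in\partial\Gamma$ corresponds to the stable tree $T_{\phi}=T_{\phi^k}$, and by \Cref{prop:att}(2) we have $\mathcal Q_{T_\phi}(\mathrm{att}(\Psi)) = C(H_\Psi)$ and $\mathcal Q_{T_\phi}^{-1}(C(H_\Psi)) = \mathrm{att}(\Psi)$, where $H_\Psi$ is the homothety of $\overline{T_\phi}$ representing $\Psi$. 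So it suffices to show $(\ct)^{-1}(g^\infty) = \mathcal Q_{T_\phi}^{-1}(C(H_\Psi))$. For the inclusion ``$\subseteq$'': by \Cref{cor:attr}(1), any $p\in\mathrm{att}(\Psi)$ is proximal for $L(T_\phi)$, and one checks $\ct(p) = g^\infty$ --- indeed, if $p = \lim_n \Psi^n(h) = \lim_n g^{kn}h g^{-kn}$ for a suitable $h$ with $p$ in its basin, then using $G$--equivariance of $\ct$ (\Cref{prop:action}) and the fact that $g^{kn}hg^{-kn}\to g^\infty$ in $E_\Gamma\cup\partial E_\Gamma$ (conjugates of a fixed element by a sequence tending to $g^\infty$ limit onto $g^\infty$), we get $\ct(p) = \lim_n \ct(\Phi_{g^{kn}}(h\text{-approximant})) = g^\infty$. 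This also shows $g^\infty$ is $\Gamma$--essential with $\zeta(g^\infty) = \zeta = \phi^\infty$. For ``$\supseteq$'': if $q\in(\ct)^{-1}(g^\infty)$ and $q\notin\mathrm{att}(\Psi)$, then $\deg(g^\infty)\ge 2$ (as $\mathrm{att}(\Psi)\ne\varnothing$ by \Cref{prop:att}(1), so there is already one point $p_0\in\mathrm{att}(\Psi)$ in the fiber), and \Cref{lem:fiber_description} applies: $(\ct)^{-1}(g^\infty) = \mathcal Q_{T_{z}}^{-1}(c)$ for the unique $z\in\partial\Gamma$ with $p_0$ proximal for $L(T_z)$; but $p_0$ is proximal for $L(T_\phi)$, so \Cref{prop:prox}(2) forces $L(T_z) = L(T_\phi)$, hence $z = \zeta$ and $\mathcal Q_{T_z} = \mathcal Q_{T_\phi}$, and then $c = \mathcal Q_{T_\phi}(p_0) = C(H_\Psi)$, giving $(\ct)^{-1}(g^\infty) = \mathcal Q_{T_\phi}^{-1}(C(H_\Psi)) = \mathrm{att}(\Psi)$ and contradicting $q\notin\mathrm{att}(\Psi)$. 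This establishes the claimed equality.

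The main obstacle I anticipate is the careful verification that $\ct(p) = g^\infty$ for $p\in\mathrm{att}(\Psi)$ and, conversely, that no ``extraneous'' points land in the fiber in part (1) --- both hinge on the interplay between the abstract action $G\curvearrowright\partial\free$ of \Cref{prop:action}, the dynamics of conjugation on $\partial\free$, and the geometry of $g^\infty$ in $\partial E_\Gamma$; in particular one must justify that a sequence of the form $g^{n}h_n$ with $h_n\to p$ in $\free\cup\partial\free$ and $h_n$ chosen in the attracting basin converges to $g^\infty$ in $E_\Gamma\cup\partial E_\Gamma$, which requires knowing that $g$ has infinite order and controlling the Gromov products $(g^n h_n \mid g^m)$ in $E_\Gamma$. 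The rest is an assembly of \Cref{lem:fiber_description}, \Cref{prop:att}, \Cref{cor:attr}, and \Cref{cor:phi}.
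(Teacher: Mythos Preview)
Your overall architecture matches the paper's proof, but in both parts you take a longer route than necessary, and in part~(2) the detour leaves a genuine gap.

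\textbf{Part (1).} Your argument is correct but overcomplicated. You do not need the digression about $g$ normalizing $\langle w\rangle$ or the full $\langle g\rangle$--invariance of the fiber: since $w=g^k\in\free$, the $\free$--equivariance of $\ct$ already makes the fiber $\langle w\rangle$--invariant, and then your finiteness/north--south argument goes through. More to the point, the paper bypasses all of this: since each $T_z$ is a \emph{free} arational tree, the element $w$ acts hyperbolically on $T_z$, so $\mathcal Q_{T_z}(w^\infty)$ lies in $\partial T_z$ and hence has singleton preimage by \Cref{prop:Qmap}(2). Thus no leaf of any $L(T_z)$ can have $w^\infty$ as an endpoint, and \Cref{th:Mj_lam} together with \Cref{th:laminations_agree} immediately gives $(\ct)^{-1}(g^\infty)=\{w^\infty\}$. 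This is a one-line argument once you observe the key fact about $\mathcal Q_{T_z}(w^\infty)$.

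\textbf{Part (2).} Here your strategy for proving $\ct(p)=g^\infty$ when $p\in\mathrm{att}(\Psi)$ has a real gap that you yourself flag. You try to find $h\in\free$ with $\Psi^n(h)\to p$ in $\free\cup\partial\free$ and then argue $g^{kn}hg^{-kn}\to g^\infty$ in $E_\Gamma\cup\partial E_\Gamma$. Neither step is automatic: it is not obvious that iterates of a \emph{group element} under $\Psi$ converge to a given attracting boundary fixed point, and the convergence $g^{kn}hg^{-kn}\to g^\infty$ requires $h\cdot g^{-\infty}\ne g^{-\infty}$, which must be arranged. The paper avoids this entirely with the following clean argument. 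Since $p$ is fixed by $\Psi$, the $E_\Gamma$--equivariance of $\ct$ (\Cref{prop:action}) gives $g^k\cdot\ct(p)=\ct(\Psi(p))=\ct(p)$, so $\ct(p)\in\{g^\infty,g^{-\infty}\}$. To see $\ct(p)=g^\infty$, use that $p$ is a \emph{local attractor} for $\Psi$: pick any $q$ in the basin of $p$ with $\ct(q)\ne g^{-\infty}$ (such $q$ exists because the basin is open and $(\ct)^{-1}(g^{-\infty})$ is finite by \Cref{thm:main}); then $\Psi^n(q)\to p$ gives $\ct(p)=\lim_n\ct(\Psi^n(q))=\lim_n g^{kn}\ct(q)=g^\infty$, since $g^\infty$ is the unique attractor for $g$ on $\partial E_\Gamma$. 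Once you have $\mathrm{att}(\Psi)\subseteq(\ct)^{-1}(g^\infty)$ this way, your use of \Cref{lem:fiber_description} and \Cref{prop:att}(2) to get equality is exactly what the paper does.
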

\begin{proof}
First suppose $g^k = w\in \free$ for some $k\ge 1$. By continuity and $\free$--equivariance of the Cannon--Thurston map, it is immediate that $\ct$ sends $w^\infty \in \partial \free$ to $g^\infty \in \partial E_\Gamma$ (note that $(g^k)^\infty = g^\infty$ in $\partial E_\Gamma$). Thus $\{w^\infty\}\subset (\ct)^{-1}(g^\infty)$. Since for every $z\in\partial\Gamma$, $T_z$ is a free arational tree, there do not exist $z\in \partial \Gamma$ and $p\in \partial \free$ such that $(p,w^\infty)\in L(T_z)$. Therefore  \Cref{th:Mj_lam} and \Cref{th:laminations_agree} imply that $(\ct)^{-1}(g^\infty) \subset\{w^\infty\}$ and part (1) is verified. 

Now suppose that $g$ projects to an infinite order element $\phi\in \Gamma$.
As explained in \Cref{sect:fully-irred}, we may choose $k\ge 1$ so that the automorphism $\Psi\in \Aut(\free)$ given by $w\mapsto g^k w g^{-k}$ is forward rotationless. Let $p\in att(\Psi)$ be a locally attracting fixed point for the left action of $\Psi$ on $\partial \free$.  By \Cref{cor:attr}, $p$ is a proximal point for $L(T_\phi)$, and \Cref{cor:phi} moreover shows that $T_\phi = T_z$ for the point $z\colonequals \phi^\infty\in \partial\Gamma$.

 Recall that by \Cref{prop:action} the map $\ct\colon\partial \free\to\partial E_\Gamma$ is $E_\Gamma$--equivariant, and that $g^k$ acts on $\partial \free$ by $\Psi$, that is  $g^kq=\Psi(q)$ for every $q\in\partial \free$. 
Since $p=g^kp$ is a local attractor for $\Psi$, the $E_\Gamma$--equivariance and continuity of $\ct$ ensure that $\ct(p)=g^k\ct(p)$ is a local attractor for the action of $g^k$ on $\partial E_\Gamma$. Since $g$ is an element of infinite order in a word-hyperbolic group $E_\Gamma$, $g^\infty$ is the unique local attractor for the action of $g$ on $\partial E_\Gamma$. Thus we may conclude that in fact $\ct(p)=g^\infty$. This proves that $att(\Psi)\subseteq (\ct)^{-1}(g^\infty)$. Since $p$ is proximal for $L(T_{z})$, we also see that $g^\infty$ is $\Gamma$--essential and that $\zeta(g^\infty)=z = \phi^\infty$, as claimed.

Now, if $\deg(g^\infty)=1$, it follows that $\#att(\Psi)\le 1$ so that $att(\Psi) = \{p\}= (\ct)^{-1}(g^\infty)$, as required. On the other hand, if $\deg(g^\infty)\ge 2$, then  \Cref{lem:fiber_description} provides a point $c\in T_\phi$ so that $att(\Psi)\subset (\ct)^{-1}(g^\infty) = \mathcal{Q}_{T_\phi}^{-1}(c)$. Part (2) of \Cref{prop:att} then ensures that $(\ct)^{-1}(g^\infty)=att(\Psi)$. Thus claim (2) is verified.
\end{proof}

Kapovich and Lustig showed~\cite{KapLusCT} that for the Cannon--Thurston map $\partial\free\to\partial E_{\langle\phi\rangle}$ associated to a cyclic group generated by an atoroidal fully irreducible $\phi\in \Out(\free)$, every point $y\in \partial E_{\langle\phi\rangle}$ with $\deg(y)\ge 3$ is rational and has the form $y=g^\infty$ for some $g\in E_{\langle\phi\rangle}-\free$. 
(Note that here $\partial\langle\phi\rangle$ consists of two points $\phi^{\pm\infty}$, both of which are rational.) 
Here we show that this result need not hold in the general setting of convex cocompact subgroups. Rather, we find that the Cannon--Thurston map, via the assignment $y\mapsto \zeta(y)$ for $\Gamma$--essential points, detects the following relationship between rationality/irrationality in $\partial E_\Gamma$ and $\partial \Gamma$.

\begin{theorem}\label{th:rational_and_irrational}
Suppose that $1 \to \free \to E_\Gamma \to \Gamma \to 1$ is a hyperbolic extension with $\Gamma \le \Out(\free)$ convex cocompact. Then the following hold:
\begin{enumerate}
\item If $y\in \partial E_\Gamma$ has $\deg(y)\ge 3$ and $\zeta(y)\in\partial \Gamma$ is rational, then $y$ is rational.
\item If $y\in \partial E_\Gamma$ has $\deg(y)\ge 2$ and $\zeta(y)\in\partial\Gamma$ is irrational, then $y$ is irrational. 
\end{enumerate}
\end{theorem}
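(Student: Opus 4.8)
The plan is to dispatch part (2) quickly by contraposition, and to reduce part (1) to the cyclic subgroup $\langle\phi\rangle$ generated by the infinite-order element $\phi\in\Gamma$ with $\phi^{\infty}=\zeta(y)$, where the Kapovich--Lustig rationality mechanism via homotheties of the stable tree can be run inside $E_\Gamma$.

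For part (2) I would argue by contradiction: assume $y=g^\infty$ is rational with $g\in E_\Gamma$ of infinite order, and apply \Cref{th:rational_fibers} to $g$. Case (1) of that theorem forces $(\ct)^{-1}(y)=\{(g^{k})^{\infty}\}$ and hence $\deg(y)=1$, contradicting $\deg(y)\ge 2$; case (2) gives that $y$ is $\Gamma$--essential with $\zeta(y)=\phi^\infty$ for the infinite-order $\phi\in\Gamma$ that $g$ projects to. Since $\phi^\infty$ is a rational point of $\partial\Gamma$, this contradicts the hypothesis that $\zeta(y)$ is irrational. Hence $y$ is irrational. This part should be entirely routine given \Cref{th:rational_fibers}.

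For part (1): since $\deg(y)\ge 3$, the point $y$ is $\Gamma$--essential, so $z\colonequals\zeta(y)$ is defined; by hypothesis $z=\phi^\infty$ for some infinite-order $\phi\in\Gamma$, which is atoroidal fully irreducible because $\Gamma$ is convex cocompact and purely atoroidal, and \Cref{cor:phi} identifies $T_z$ with the stable tree $T_\phi$. By \Cref{lem:fiber_description} there is a point $c\in\overline{T_\phi}$ with
\[(\ct)^{-1}(y)=\mathcal Q_{T_\phi}^{-1}(c),\qquad \#\mathcal Q_{T_\phi}^{-1}(c)=\deg(y)\ge 3.\]
The key step I would carry out is to build an infinite-order $g\in E_\Gamma$ that projects to a power of $\phi$, whose conjugation automorphism $\Psi_g\colon w\mapsto gwg^{-1}$ is forward rotationless, and whose homothety $H_{\Psi_g}$ of $T_\phi$ has center exactly $c$. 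To produce $g$ I would use, from \Cref{sect:fully-irred}, that the homotheties of the $\Aut(\free)$--representatives of a given power $\phi^m$ form a single $\free$--coset $\{w\,H_{\Phi_0^m}:w\in\free\}$ for a fixed representative $\Phi_0$ of $\phi$, and that each such homothety has a unique fixed point; that $H_{\Phi_0}$ intertwines with $\mathcal Q_{T_\phi}$ (via equation~\eqref{eqn:homothety-action}) and therefore preserves the set of points of $\hat T_\phi$ whose $\mathcal Q_{T_\phi}$--fiber has cardinality $\ge 3$; and that by \Cref{prop:Qmap}(4) this set has only finitely many $\free$--orbits. Hence after replacing $\phi$ by a power we may assume $H_{\Phi_0}$ fixes the $\free$--orbit of $c$, say $H_{\Phi_0}(c)=w'c$; then $(w')^{-1}H_{\Phi_0}$ is the homothety of some representative $\Phi$ of $\phi$ and fixes $c$, so its unique fixed point is $c$. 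Passing to a further power as in \Cref{th:rational_fibers}(2) to make $\Phi$ forward rotationless does not change this center, since $H_{\Phi^{k}}=H_\Phi^{\,k}$ and hence $C(H_{\Phi^{k}})=C(H_\Phi)=c$; I would then set $g\colonequals\Phi\in E_\Gamma$.

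With such a $g$, \Cref{th:rational_fibers}(2) gives $(\ct)^{-1}(g^\infty)=\mathrm{att}(\Psi_g)$, and \Cref{prop:att}(2) yields
\[\mathrm{att}(\Psi_g)=\mathcal Q_{T_\phi}^{-1}\!\big(C(H_{\Psi_g})\big)=\mathcal Q_{T_\phi}^{-1}(c)=(\ct)^{-1}(y),\]
so the fibers $(\ct)^{-1}(g^\infty)$ and $(\ct)^{-1}(y)$ coincide; choosing any $p$ in this (nonempty) common fiber gives $g^\infty=\ct(p)=y$, so $y$ is rational. The step I expect to be the main obstacle is the homothety bookkeeping in the construction of $g$: verifying that the center of the homothety can be moved onto the prescribed branch point $c$ by adjusting the $\Aut(\free)$--representative within its $\Inn(\free)$--coset, and that this survives the further power needed for forward rotationlessness. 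This is exactly the place where $\deg(y)\ge 3$ is essential, through the orbit-finiteness of \Cref{prop:Qmap}(4); no analogous finiteness holds for $\mathcal Q$--fibers of size $2$, which is consistent with part (2) only excluding rationality, rather than establishing it, under the weaker hypothesis $\deg(y)\ge 2$.
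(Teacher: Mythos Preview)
Your proposal is correct and follows essentially the same route as the paper's proof. For part (2) you argue exactly as the paper does via \Cref{th:rational_fibers}; for part (1) both you and the paper use \Cref{lem:fiber_description} to realize $(\ct)^{-1}(y)$ as a $\mathcal Q_{T_\phi}$--fiber over some $c\in\overline{T_\phi}$, invoke orbit-finiteness from \Cref{prop:Qmap}(4) to find a power of $\phi$ whose homothety fixes the $\free$--orbit of $c$, adjust by an element of $\free$ to center the homothety at $c$, and then conclude via \Cref{prop:att}(2). The only cosmetic differences are that the paper passes to an FR power before arranging the orbit to be fixed (you do it afterwards, correctly noting $C(H_{\Phi^k})=C(H_\Phi)$), and that the paper finishes by directly using $E_\Gamma$--equivariance of $\ct$ (any $p\in\mathrm{att}(\Psi)$ satisfies $gp=p$, hence $gy=y$, hence $y=g^{\pm\infty}$) rather than routing through \Cref{th:rational_fibers}(2) to identify the two fibers.
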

Our argument for part (1) of the above theorem is similar to the proof of part (3) of Theorem~5.5 in \cite{KapLusCT}. The proof is included here for completeness. 
\begin{proof}
Suppose first that $y\in \partial E_\Gamma$ is such that $\deg(y)\ge 3$ and $\zeta(y)\in\partial \Gamma$ is rational. Thus $\zeta(y)=\phi^\infty$ for some atoroidal fully irreducible $\phi\in\Gamma$. 
Then $(\ct)^{-1}(y)=\mathcal Q_{T_\phi}^{-1}(x)$ for some $x\in \overline{T_\phi}$. Let $k\ge 1$ be such that $\psi=\phi^k$ is FR.  Note that $T_\phi=T_\psi$.  Choose some homothety $H$ of $\overline T_\phi$.  \Cref{eqn:homothety-action} in \Cref{sect:fully-irred} implies that $H$ acts on $\free$--orbits of points of $\overline T_\phi$.  Since there are only finitely many $\free$--orbits of points of $\overline T_\phi$ with $\mathcal Q_{T_\phi}$--preimage of cardinality $\ge 3$ (\Cref{prop:Qmap}), some positive power of $H$ preserves every such orbit and, in particular, preserves the orbit of $x$. Thus, after replacing $k$ by $kt$ for some integer $t\ge 1$, we may assume that $Hx=wx$ for some $w\in\free $. Then $w^{-1}Hx=x$. The homothety $H_1=w^{-1}H$ represents some $\Psi\in\Aut(\free)$ whose outer automorphism class is $\psi$.  Moreover, $x$ is the center of the homothety $H_1$. Therefore by  \Cref{prop:att}  it follows that $\mathcal Q_{T_\phi}^{-1}(x)=att(\Psi)$. Thus $(\ct)^{-1}(y)=att(\Psi)$. 

Choose $g\in E_\Gamma$ so that the automorphism $h\mapsto ghg^{-1}$ of $\free$ is exactly $\Psi$. Note that $g$ projects to $\psi\in\Gamma$ and thus $g$ has infinite order in $E_\Gamma$. Let $p\in att(\Psi)$. We have $p=\Psi(p)=gp$ and $\ct(p)=y$. By $E_\Gamma$--equivariance of $\ct$ it follows that $gy=y$. Since $g$ is an element of infinite order in a word-hyperbolic group $E_\Gamma$, it follows that $y=g^{\pm\infty}$ is rational. This proves claim (1).

Next suppose that $\deg(y)\ge 2$ and that $\zeta(y)$ is irrational. Assuming, on the contrary, that $y$ is rational, we have $y=g^\infty$ for some non-torsion element $g\in E_\Gamma$. Since $\deg(y)\ge 2$, \Cref{th:rational_fibers} (1) implies that $g$ projects to an element of infinite order $\phi$ of $\Gamma$. But then $\zeta(y) = \phi^\infty$ is rational by \Cref{th:rational_fibers} (2), contradicting the assumption that $\zeta(y)$ was irrational. Therefore $y$ is indeed rational and claim (2) holds.
\end{proof}

\subsection{Conical limit points}  
Recall that every non-elementary subgroup of a word-hyperbolic group $G$ acts as a convergence group on the Gromov boundary $\partial G$. 
If a group $H$ acts as a convergence group on a compact metrizable space $Z$, a point $z\in Z$ is called a \emph{conical limit point} for the action of $H$ on $Z$ if there exist an infinite sequence $h_n$ of distinct elements of $H$ and a pair of distinct points $z_-,z_+\in Z$ such that $\lim_{n\to\infty} h_n z = z_+$ and that $(h_n|_{Z\setminus \{z\}} )_n$ converges uniformly on compact subsets to the constant map $c_{z_-} \colon Z\setminus \{z\}\to Z$ sending $Z\setminus \{z\}$ to $z_-$. It is also known that if $H\le G$ is a non-elementary subgroup of a word-hyperbolic group $G$, then $z\in \partial G$ is a conical limit point for the action of $H$ on $\partial G$ if and only if there exists an infinite sequence of distinct elements $h_n\in H$ such that all $h_n$ lie in a bounded Hausdorff neighborhood of a geodesic ray from $1$ to $z$ in the Cayley graph of $G$.  We refer the reader to \cite{JKLO} for more details and background regarding conical limit points. 

\begin{theorem}\label{th:conical}\label{cor:main_4}
Let $\Gamma\le\Out(\free)$ be purely atoroidal and convex cocompact. If $y\in \partial E_\Gamma$ is $\Gamma$--essential, then $y$ is not a conical limit point for the action of $\free$ on $\partial E_\Gamma$. 
In particular, if $\deg(y)\ge 2$ or if $y=g^\infty$ for some $g\in E_\Gamma$ projecting to an infinite-order element of $\Gamma$, then $y$ is not a conical limit point for the action of $\free$.
\end{theorem}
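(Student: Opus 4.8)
The plan is to obtain the theorem by feeding the structural results of this paper into the characterization of conical limit points via the Cannon--Thurston map proved by Jeon--Kapovich--Leininger--Ohshika in \cite{JKLO}. The input needed from \cite{JKLO} is the following: for a short exact sequence $1\to H\to G\to Q\to 1$ of infinite word-hyperbolic groups with Cannon--Thurston map $\ct\colon\partial H\to\partial G$ and Mitra ending lamination $\Lambda$, a point $\ct(p)\in\partial G$ is \emph{not} a conical limit point for the action of $H$ on $\partial G$ whenever $p\in\partial H$ is proximal for $\Lambda$; this refines the classical fact (due to Gerasimov \cite{Gerasimov}, reproved in \cite{JKLO}) that a point with $\ge 2$ preimages under $\ct$ cannot be conical. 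In our situation all three groups in $1\to\free\to E_\Gamma\to\Gamma\to 1$ are hyperbolic by \Cref{th: DT1}, so this applies, and by \Cref{th:Mj_lam} together with \Cref{th:laminations_agree} the ending lamination is $\Lambda=\bigcup_{z\in\partial\Gamma}L(T_z)$ with each $T_z$ free and arational.

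The deduction is then short. First I would record the compatibility of the two notions of proximality in play: if $p\in\partial\free$ is proximal for $L(T_z)$ in the sense of \Cref{defn:proximal}, then, since $L(T_z)=\Lambda_z\subseteq\Lambda$, every word that occurs infinitely often along the geodesic ray from $1$ to $p$ in $\ca(\free,X)$ is a leaf segment of $L(T_z)$, hence a leaf segment of $\Lambda$; thus $p$ is proximal for $\Lambda$. Now let $y\in\partial E_\Gamma$ be $\Gamma$--essential. By \Cref{defn:essential} there are $z=\zeta(y)\in\partial\Gamma$ and $p\in(\ct)^{-1}(y)$ with $p$ proximal for $L(T_z)$; by the previous sentence $p$ is proximal for $\Lambda$, so the quoted result of \cite{JKLO} gives that $y=\ct(p)$ is not a conical limit point for $\free\curvearrowright\partial E_\Gamma$. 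For the ``in particular'' clause, recall that $\deg(y)\ge 2$ forces $y$ to be $\Gamma$--essential by \Cref{lem:fiber_description}, and that if $y=g^\infty$ with $g\in E_\Gamma$ projecting to an infinite-order element of $\Gamma$ then $y$ is $\Gamma$--essential by \Cref{th:rational_fibers}(2); in both cases the preceding sentence applies.

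The main obstacle I anticipate is bookkeeping rather than mathematical depth: one must ensure that the statement imported from \cite{JKLO} is quoted in exactly the right form, i.e.\ that the hypothesis on $p\in\partial\free$ under which \cite{JKLO} excludes conical limit points (whether phrased via ``proximality'', via geodesics in $E_\Gamma$ that track the preimage, or directly in terms of $\Lambda$) is genuinely implied by proximality for $L(T_z)$ as in \Cref{defn:proximal}, and that no further non-degeneracy hypothesis on the extension is required beyond hyperbolicity of the three groups. Once this is pinned down, \Cref{th:laminations_agree} does the remaining work by converting Mitra's abstract lamination $\Lambda$ into the concrete dual laminations $L(T_z)$ to which \Cref{defn:proximal} (and \Cref{prop:Qmap}) apply.
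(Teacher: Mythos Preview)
Your proposal is correct and follows essentially the same route as the paper's proof: unpack the definition of $\Gamma$--essential to obtain $p$ proximal for $L(T_z)$, use \Cref{th:laminations_agree} to identify $L(T_z)=\Lambda_z\subseteq\Lambda$, and then invoke \cite[Theorem~B]{JKLO} to conclude that $\ct(p)$ is not a conical limit point; the ``in particular'' clause is handled via \Cref{lem:fiber_description} and \Cref{th:rational_fibers} exactly as you indicate. Your anticipated bookkeeping concern is well-placed but resolves cleanly: the hypothesis in \cite{JKLO} is precisely that every word occurring infinitely often along the ray to $p$ be a leaf segment of $\Lambda$, which is what proximality for $L(T_z)\subseteq\Lambda$ gives.
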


\begin{proof}
Choose a free basis $X$ of $\free$. Suppose that $y\in \partial E_\Gamma$ is $\Gamma$--essential, and let $z=\zeta(y)\in\partial \Gamma$ so that $y=\ct(p)$ for some $p\in\partial\free$ proximal to $L(T_z)=\Lambda_z$.
Then every freely reduced word over $X^{\pm 1}$ which occurs infinitely many times in the geodesic ray from $1$ to $p$ in $\ca(\free,X)$ is a leaf-segment for $L(T_z)=\Lambda_z$.
Therefore \cite[Theorem~B]{JKLO} implies that $y=\ct(p)$ is not a conical limit point for the action of $\free$ on $\partial E_\Gamma$, as claimed. The remaining assertions now follow from \Cref{lem:fiber_description} and \Cref{th:rational_fibers}.
\end{proof}

\section{Discontinuity of $z\in \partial \Gamma \mapsto \Lambda_z \in \L(\free)$}\label{sec:continuity}

In this section, we answer a question of Mahan Mitra using our hyperbolic extension $E_\Gamma$. As stipulated in \Cref{conv:main}, our fixed finitely generated subgroup $\Gamma \le \Out(\free)$ has a quasi-isometric orbit map into the free factor complex and gives rise to an exact sequence of word-hyperbolic groups
\[1 \longrightarrow \free \longrightarrow E_\Gamma \longrightarrow \Gamma \longrightarrow 1.\]
Thus each point $z \in \partial \Gamma$ has an associated ending lamination $\Lambda_z$ (\Cref{def:Mitra}) and we consider the map $F\colon \partial \Gamma \to \mathcal{L}(\free)$ defined by $F(z)=\Lambda_z$. 
Here $\L(\free)$ is the set of laminations equipped with the Chabauty topology (\Cref{def:Chabauty}). In his work on Cannon--Thurston maps for normal subgroups of hyperbolic groups, Mitra asked whether this map  $F$ is continuous.  We answer this question by producing an explicit example for which $F\colon \partial \Gamma \to \mathcal{L}(\free)$ is not continuous. This is done in \Cref{ex:disc}.

Before turning to this example, we establish a ``subconvergence'' property for the map $F\colon \partial \Gamma \to \mathcal{L}(\free)$. This is the strongest positive result that one can give about continuity with respect to the Chabauty topology on $\L(\free)$. Recall that for a lamination $L \in \L(\free)$ the notation $L'$ denotes the set of accumulation points of $L$, in the usual topological sense.

\begin{proposition}\label{th:lamination_continuity}
Let $\Gamma\le\Out(\free)$ be purely atoroidal and convex cocompact, 
and let $\Lambda_z\in\L(\free)$ denote the ending lamination associated to $z\in \partial \Gamma$. Then for any sequence $z_i$ in $\partial \Gamma$ converging to $z$ and any subsequence limit $L$ of the corresponding sequence $\Lambda_{z_i}$ in $\L(\free)$, we have
\[
\Lambda'_z \subset L \subset \Lambda_z.
\]
\end{proposition}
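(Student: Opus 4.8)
The plan is to establish the two inclusions $\Lambda'_z \subset L$ and $L \subset \Lambda_z$ separately, using the identification $\Lambda_z = L(T_z)$ from \Cref{th:laminations_agree} together with the control on translation lengths provided by \Cref{prop:limit_to_zero} and the folding-ray machinery of \Cref{lem:folding_rays,lem:eventual_growth}. Throughout I would fix a free basis $X$ of $\free$ and work with the geometric description of leaves of $L(T)$ afforded by \Cref{rem:L(T)}: a leaf of $L(T)$ is a biinfinite geodesic all of whose finite subwords appear inside arbitrarily $T$-short cyclically reduced words. The reason the Chabauty topology forces exactly these two inclusions (and no more) is the asymmetry in \Cref{def:Chabauty}: the subbasic sets $\mathcal U_2(O)$ only allow one to conclude that limits do not lose ``isolated'' leaves, while $\mathcal U_1(K)$ forbids limits from gaining leaves outside $\Lambda_z$; the accumulation set $\Lambda'_z$ is precisely the part of $\Lambda_z$ that survives any Chabauty limit.

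For the inclusion $L \subset \Lambda_z$: let $(a,b) \in L$ be a leaf of a subsequential Chabauty limit of $\Lambda_{z_i} = L(T_{z_i})$. By the geometric interpretation of Chabauty convergence (the lemma referenced as \Cref{lem:Cabauty_convergence}), every finite subword $v$ of the biinfinite geodesic from $a$ to $b$ in $\ca(\free,X)$ is a subword of leaves of $L(T_{z_i})$ for all large $i$; hence $v$ is an $X$--leaf segment for $L(T_{z_i})$, so there is a cyclically reduced word $w_i$ containing $v$ with $\ell_{T_{z_i}}(w_i)$ arbitrarily small. I now want to promote this to $\ell_{T_z}(w) $ small for some word $w$ containing $v$. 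Here I would use that $z_i \to z$ in $\partial\Gamma \subset \partial\F$, choose quasigeodesics in $\Gamma$ converging to the $z_i$ and to $z$ with uniform constants, and invoke \Cref{prop:limit_to_zero}: since each $w_i$ can be taken of the form $g(h)$ for $g\in\Gamma$ far along the ray to $z_i$ and $h$ ranging over a fixed finite generating set's worth of conjugacy classes (using the Mitra finiteness remark that $\Lambda_{z_i} = \cup_{h\in R}\Lambda_{z_i,h}$), a diagonal argument combining convergence $z_i\to z$ with $\ell_{T_{z_i}}(g(h))\to 0$ along each ray should produce, for every $\epsilon>0$, a conjugacy class with $\ell_{T_z}(\cdot)\le\epsilon$ containing $v$ as a subword. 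By \Cref{rem:L(T)} this gives $(a,b)\in L(T_z) = \Lambda_z$.

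For the inclusion $\Lambda'_z \subset L$: let $(a,b)$ be an accumulation point of $\Lambda_z = L(T_z)$, so there are distinct leaves $(a_n,b_n)\in L(T_z)$ with $(a_n,b_n)\to (a,b)$. Since $L(T_z)$ is $\free$--invariant and $\free$ acts cocompactly on pairs at bounded distance, any finite subword $v$ of the geodesic $[a,b]$ occurs as a subword of the geodesic $[a_n,b_n]$ for large $n$, in fact occurs infinitely often inside leaves of $L(T_z)$ — this is where one uses that $(a,b)$ is an \emph{accumulation} point rather than merely a leaf, so $v$ is a leaf segment for $L(T_z)$ appearing in infinitely many distinct $\free$--translates. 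Now, running the folding ray $\gamma$ from \Cref{lem:folding_rays} toward $z$ and comparing it with folding rays toward $z_i$ (using that $z_i\to z$ forces the folding rays to fellow-travel on longer and longer initial segments, by hyperbolicity of $\os$ relative to $\Gamma\cdot R$ and \Cref{th:factor_complex_boundary}), I would show that such a persistently-recurring $v$ must be a leaf segment for $L(T_{z_i})=\Lambda_{z_i}$ for all large $i$. Then $v$ appears in leaves of $\Lambda_{z_i}$ for all large $i$, which by the Chabauty definition (the $\mathcal U_2$ conditions) forces $v$ to appear in leaves of the limit $L$; letting $v$ exhaust $[a,b]$ gives $(a,b)\in L$.

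The main obstacle I anticipate is the promotion step in $L\subset\Lambda_z$: controlling $\ell_{T_z}$ of words that are only known to be short in the nearby trees $T_{z_i}$. The intersection-pairing approach (weak-* limits of currents) would only give a vanishing intersection number $\langle T_z,\mu\rangle=0$, which — exactly as flagged in the introduction's discussion of \Cref{prop:limit_to_zero} — is strictly weaker than a genuine translation-length bound. So the real work is to set up the diagonal argument so that the words witnessing shortness in $T_{z_i}$ are literally of the form $g_i(h)$ for $g_i$ ranging along a quasigeodesic that can be taken to converge to $z$ as $i\to\infty$, and then apply \Cref{prop:limit_to_zero} along that single ray to $z$. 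Making the quasigeodesic constants uniform in $i$ (so that the folding-ray constants $M,K,c$ of \Cref{lem:folding_rays} are uniform) and ensuring the relevant conjugacy classes $h$ stay in a fixed finite set are the technical points that require care, but both follow from convex cocompactness of $\Gamma$ and Mitra's finiteness remark.
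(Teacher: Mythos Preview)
Your approach works much harder than necessary, and the second inclusion in particular rests on a folding-ray fellow-traveling argument that is only sketched and has no clear mechanism for completion. The paper proves both inclusions in a few lines by combining two ingredients you have not invoked.

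For $L \subset \Lambda_z$: this is exactly Mitra's subconvergence result, stated in the paper as \Cref{prop:subconvergence} (Proposition~5.3 of \cite{MitraEndingLams}), which says directly that if $z_i\to z$ and $(p_i,q_i)\in\Lambda_{z_i}$ converge to $(p,q)\in\DB$, then $(p,q)\in\Lambda_z$. Together with the characterization of Chabauty convergence in \Cref{lem:Cabauty_convergence}, this gives $L\subset\Lambda_z$ immediately --- no diagonal argument via \Cref{prop:limit_to_zero} is needed, and the delicate promotion step you flag (turning $T_{z_i}$--shortness into $T_z$--shortness) is avoided entirely. Your proposed workaround for that step also has a real gap: the finite set $R$ in Mitra's remark (\Cref{rem:Lambda_z}) is allowed to depend on the boundary point, so you cannot assume the witnessing conjugacy classes $h$ range over a single fixed finite set uniformly in $i$.

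For $\Lambda'_z \subset L$: here you are missing the key structural fact that makes the argument short. Once $L\subset\Lambda_z=L(T_z)$ is established, $L$ is a nonempty algebraic sublamination of $L(T_z)$ (it is nonempty and lies in $\L(\free)$ as a Chabauty limit in the closed subspace $\L(\free)\subset C(\DB)$). Now invoke \cite[Theorem~A]{CHR11}: for a free arational tree $T_z$, the dual lamination $L(T_z)$ is the diagonal closure of its \emph{unique} minimal sublamination, and that minimal sublamination is exactly the derived set $L'(T_z)=\Lambda'_z$ (the finitely many $\free$--orbits of diagonal leaves being isolated). Hence any nonempty sublamination of $L(T_z)$ contains $\Lambda'_z$, and in particular $\Lambda'_z\subset L$. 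Your folding-ray comparison is effectively trying to reprove a special case of this structural theorem by hand; as written it gives no concrete reason why a leaf segment of $\Lambda'_z$ must appear in $L(T_{z_i})$ for large $i$.
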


Before proving \Cref{th:lamination_continuity}, we first recall a characterization of convergence in the Chabauty topology. Let $X$ be a locally compact metric space and let $C(X)$ be the space of closed subsets of $X$ equipped with the Chabauty topology. Recall that $C(X)$ is compact. The following lemma is well known; see \cite{canary2006fundamentals}.

\begin{lemma}[Chabauty convergence] \label{lem:Cabauty_convergence}
For a locally compact metric space $X$, a sequence $C_i$ converges to $C$ in $C(X)$ if and only if the following conditions are satisfied:
\begin{enumerate}
\item For each $x_{i_k} \in C_{i_k}$, whenever $x_{i_k} \to x$ in $X$ as $k \to \infty$ it follows that $x\in C$.
\item For each $x\in C$, there is are $x_i \in C_i$ with $x_i \to x$ in $X$ as $i \to \infty$.
\end{enumerate}
\end{lemma}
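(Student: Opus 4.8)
The plan is to check both implications directly against the subbasis $\{\mathcal{U}_1(K)\}\cup\{\mathcal{U}_2(O)\}$ defining the Chabauty topology, using only that $X$ is locally compact and metric. I will use throughout the elementary fact that $C_i\to C$ in $C(X)$ if and only if for every subbasic open $U$ with $C\in U$ one has $C_i\in U$ for all large $i$ (every open neighborhood of $C$ contains a finite intersection of such subbasic sets through $C$, and a finite intersection of ``eventually'' conditions is again ``eventually'').

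For the forward direction, assume $C_i\to C$. To obtain condition (1): if $x_{i_k}\in C_{i_k}$ with $x_{i_k}\to x$ but $x\notin C$, I would use local compactness of the metric space $X$ to choose an open set $V\ni x$ with $K=\overline V$ compact and $K\cap C=\varnothing$; then $C\in\mathcal{U}_1(K)$ forces $C_i\cap K=\varnothing$ for all large $i$, which contradicts $x_{i_k}\in V\subseteq K$ for all large $k$. To obtain condition (2): given $x\in C$, for each $n\ge1$ the open ball $B(x,1/n)$ meets $C$, so $C\in\mathcal{U}_2(B(x,1/n))$, whence $C_i\cap B(x,1/n)\neq\varnothing$ for all $i$ beyond some threshold $i_n$, which I arrange to be strictly increasing; choosing $x_i\in C_i\cap B(x,1/n)$ whenever $i_n\le i<i_{n+1}$ produces $x_i\in C_i$ with $x_i\to x$. (The finitely many initial indices and the degenerate case $C=\varnothing$ are handled trivially, after observing that $C\neq\varnothing$ forces $C_i\neq\varnothing$ for large $i$ via $\mathcal{U}_2(X)$.)

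For the converse, assume (1) and (2) and let $U\ni C$ be subbasic. If $U=\mathcal{U}_1(K)$ with $K$ compact and $C\cap K=\varnothing$: were $C_i\cap K\neq\varnothing$ along some subsequence, I would pick $x_{i_k}\in C_{i_k}\cap K$, pass to a further subsequence with $x_{i_k}\to x\in K$ using compactness of $K$, and invoke (1) to conclude $x\in C\cap K$, a contradiction; hence $C_i\in\mathcal{U}_1(K)$ eventually. If $U=\mathcal{U}_2(O)$ with $O$ open and $C\cap O\neq\varnothing$: pick $x\in C\cap O$, apply (2) to get $x_i\in C_i$ with $x_i\to x$, and note $x_i\in O$ for large $i$ since $O$ is open, so $C_i\cap O\neq\varnothing$ and $C_i\in\mathcal{U}_2(O)$ eventually. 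Together these give $C_i\to C$.

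I do not anticipate a genuine obstacle in this argument; the one point demanding care is that local compactness is used essentially — in the forward direction to separate a point $x\notin C$ from the closed set $C$ by a \emph{compact} neighborhood, and in the converse as the compactness of $K$ in $\mathcal{U}_1(K)$, which is exactly what powers the subsequence extraction. Metrizability of $X$ enters only through the shrinking balls $B(x,1/n)$ in the proof of (2). I would also record the empty-set cases explicitly so that the statement is literally correct when some $C_i$ or $C$ is empty.
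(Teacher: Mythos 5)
Your proof is correct. Note that the paper does not actually prove this lemma: it records it as well known and refers to Canary--Marden--Epstein \cite{canary2006fundamentals}, so there is no in-paper argument to compare against. Your argument is the standard one: reduce convergence to the subbasic sets $\mathcal{U}_1(K)$ and $\mathcal{U}_2(O)$ (valid because ``eventually'' passes through finite intersections of subbasic neighborhoods), use local compactness to trap a limit point $x\notin C$ inside a compact set disjoint from the closed set $C$ for condition (1), use shrinking balls and a diagonal choice of thresholds for condition (2), and in the converse use sequential compactness of $K$ together with condition (1) to rule out $C_i$ meeting $K$ infinitely often. Your explicit flagging of the empty-set degeneracies and of exactly where local compactness and metrizability enter is a welcome addition, since the statement as printed glosses over the case of finitely many empty $C_i$.
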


As ``weak evidence'' towards continuity of the map $F$, Mitra proves the following proposition \cite{MitraEndingLams}, which essentially amounts to verifying $(1)$ in \Cref{lem:Cabauty_convergence}.

\begin{proposition}[Proposition $5.3$ of \cite{MitraEndingLams}]\label{prop:subconvergence}
If $z_i \to z$ in $\partial \Gamma$ and if $(p_i,q_i)\in\Lambda_{z_i}$ converge to $(p,q)$ in $\DB$, then $(p,q)\in\Lambda_z$.
\end{proposition}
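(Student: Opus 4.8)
The statement asserts a two-sided containment $\Lambda_z' \subset L \subset \Lambda_z$ for any subsequential Chabauty limit $L = \lim_k \Lambda_{z_{i_k}}$. The upper containment $L \subset \Lambda_z$ is essentially immediate from Mitra's \Cref{prop:subconvergence} together with condition (1) of the Chabauty convergence criterion (\Cref{lem:Cabauty_convergence}): if $(p,q) \in L$, then by (1) there are $(p_k,q_k) \in \Lambda_{z_{i_k}}$ with $(p_k,q_k) \to (p,q)$ in $\DB$, and since $z_{i_k} \to z$, \Cref{prop:subconvergence} gives $(p,q) \in \Lambda_z$. So the real content is the lower containment $\Lambda_z' \subset L$, i.e., every \emph{accumulation point} of a leaf of $\Lambda_z$ is a limit of leaves of the $\Lambda_{z_{i_k}}$; by condition (2) of \Cref{lem:Cabauty_convergence} this is what we must establish.

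\textbf{Strategy for $\Lambda_z' \subset L$.} Fix $(p,q) \in \Lambda_z' = L(T_z)'$ (using \Cref{th:laminations_agree}). I would first pass to the characterization of $L(T_z)$ in terms of short conjugacy classes: by \Cref{rem:L(T)}, for every finite subword $v$ of the bi-infinite geodesic from $p$ to $q$ in $\ca(\free,X)$ and every $\epsilon > 0$ there is a cyclically reduced word $w$ with $\ell_{T_z}(w) \le \epsilon$ containing $v$. The point of working with an \emph{accumulation} point rather than an arbitrary leaf is the following: since $(p,q)$ is not isolated in $L(T_z)$, every finite subword $v$ of the $p$-to-$q$ geodesic occurs as a subword of arbitrarily long leaf segments of $L(T_z)$, hence can be realized inside the \emph{middle} of a leaf segment with long words on both sides — equivalently, $v$ is a subword of a cyclic word $w$ with $\ell_{T_z}(w)$ arbitrarily small. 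Now the mechanism of \Cref{prop:limit_to_zero} runs in reverse: fix $h \in \free$ realizing (a suitable occurrence of) $v$ as a subword of its cyclically reduced form; by \Cref{prop:limit_to_zero} applied to each $z_i$, and using that the folding rays of \Cref{lem:folding_rays} for the $z_i$ can be chosen with \emph{uniform} constants $K, M$ (they depend only on the quasigeodesic constant of $(g_n)$ in $\Gamma$, which can be taken uniform since $\Gamma$ is quasi-isometrically embedded), one extracts from the words $g_n(h)$ along the ray to $z_i$ short cyclic words for $T_{z_i}$ containing $v$. Concretely: for each $\epsilon$ and each $v$, one wants to produce, for all large $i$, a leaf of $\Lambda_{z_i}$ whose associated bi-infinite geodesic contains $v$ and whose two ends converge (as $v$ grows and $i \to \infty$ along the subsequence) to $p$ and $q$ respectively.

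\textbf{Implementation and the main obstacle.} The clean way to package this is a diagonal/compactness argument. Exhaust the $p$-to-$q$ geodesic by nested subwords $v_1 \subset v_2 \subset \cdots$, each centered (say) at the basepoint. For each $n$, using that $(p,q)$ is an accumulation point of $L(T_z)$, choose $h_n \in \free$ whose conjugacy class has $v_n$ as an interior subword of its cyclically reduced form and with $\ell_{T_z}(h_n)$ small; then approximate $T_z$ by the $T_{z_i}$. The subtle point — and the main obstacle — is the \emph{uniformity}: one needs that for each fixed $n$, for all sufficiently large $i$ (along the subsequence $i_k$), the lamination $\Lambda_{z_i} = L(T_{z_i})$ contains a leaf segment that both contains $v_n$ and has the segments on either side of $v_n$ long enough that the corresponding leaves, as $i \to \infty$, accumulate onto a bi-infinite geodesic through $v_n$ that is forced (by letting $n \to \infty$ and diagonalizing) to have endpoints $p$ and $q$. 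This requires a quantitative version of \Cref{prop:limit_to_zero} with constants uniform over the family $\{z_i\}$, which is available because: (i) the flaring constants in \Cref{lem:eventual_growth} and \Cref{lem:folding_rays} depend only on $K, \lambda$ and hence only on $\rank(\free)$ and the qi-embedding constant of $\Gamma \hookrightarrow \F$; (ii) the convergence $z_i \to z$ forces the folding rays of \Cref{lem:folding_rays} for $z_i$ to fellow-travel that for $z$ on longer and longer initial segments (by quasiconvexity of $\Gamma \cdot R$ and stability of the folding rays in \cite{DT1}); so short conjugacy classes of $T_z$ supported on a bounded-length piece of the ray are, for large $i$, also short for $T_{z_i}$. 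Once this uniform statement is in hand, condition (2) of \Cref{lem:Cabauty_convergence} follows and we conclude $\Lambda_z' \subset L$, completing the proof. I would also remark that the inclusions can be strict (this is exactly the content of the discontinuity \Cref{ex:disc}), so no better statement is possible; in particular $L$ need not equal $\Lambda_z$ precisely because $\Lambda_z \setminus \Lambda_z'$ — the isolated diagonal leaves — can fail to be approximated.
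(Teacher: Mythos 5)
Your proposal does not prove the stated proposition. What you were asked to prove is the one-sided subconvergence assertion: if $(p_i,q_i)\in\Lambda_{z_i}$ converge in $\DB$ to $(p,q)$ and $z_i\to z$, then $(p,q)\in\Lambda_z$ --- that is, precisely condition (1) of \Cref{lem:Cabauty_convergence} for the family $\{\Lambda_{z_i}\}$, equivalently the containment $L\subset\Lambda_z$ for subsequential limits. You instead set out to prove the two-sided containment $\Lambda'_z\subset L\subset\Lambda_z$, which is \Cref{th:lamination_continuity}, a different and stronger statement. Worse, your argument for the upper containment explicitly invokes ``Mitra's \Cref{prop:subconvergence}'', so as a proof of \Cref{prop:subconvergence} itself the proposal is circular. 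In the paper this proposition is quoted from Mitra \cite[Proposition 5.3]{MitraEndingLams} without proof; the remark following it notes that it can alternatively be deduced from the semicontinuity of $T\mapsto L(T)$ under convergence of trees (\cite[Proposition~1.1]{CHLunpub}) combined with the identification $\Lambda_z=L(T_z)$ of \Cref{th:laminations_agree}. A self-contained treatment would have to supply one of these arguments, neither of which appears in your write-up.

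Even judged as a proof of \Cref{th:lamination_continuity}, the part of the argument that is genuinely yours --- the lower containment $\Lambda'_z\subset L$ --- takes a far harder route than necessary and leaves a real gap. The paper's proof is short: a subsequential Chabauty limit $L$ of the $\Lambda_{z_i}$ is itself a nonempty algebraic lamination contained in $L(T_z)$, and since $L(T_z)$ has a \emph{unique minimal} sublamination, namely $L'(T_z)=\Lambda'_z$ (\cite[Theorem~A]{CHR11} together with \Cref{th:laminations_agree}), any such $L$ must contain it. Your diagonal argument instead rests on a quantitative, uniform-in-$i$ strengthening of \Cref{prop:limit_to_zero} (that conjugacy classes short for $T_z$ and supported on a bounded initial piece of the ray are short for $T_{z_i}$ once $i$ is large), which you assert via fellow-traveling of folding rays but do not establish; nothing in the paper provides this uniformity, and it would require a genuine proof.
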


\begin{remark}[Semi-continuity of the map $T \mapsto L(T)$]
In \cite{CHL2}, the authors remark that if $(T_i)_{i\ge 0}$ is a sequence of trees in $\cvbar$ converging to a tree $T$, then any subsequence limit $L$ of the corresponding sequence of dual laminations $(L(T_i))_{i\ge 0}$ in $\L(\free)$ is contained in $L(T)$. They elaborate this statement in \cite[Proposition~1.1]{CHLunpub}.  Combining this general fact with \Cref{th:laminations_agree} gives an alternative proof of \Cref{prop:subconvergence}.
\end{remark}

We now turn to the proof of \Cref{th:lamination_continuity}.

\begin{proof}[Proof of \Cref{th:lamination_continuity}]
Suppose that $z_i \to z$ in $\partial \Gamma$. Then by \Cref{prop:subconvergence} and \Cref{lem:Cabauty_convergence}, if $L$ is any subsequential limit of $\Lambda_{z_i}$ in the Chabauty topology it follows that $L \subset \Lambda_z$. 
By \Cref{th:laminations_agree}, $\Lambda_z = L(T_z)$ for the free and arational tree $T_z$; we also know that  $L(T_z)$ is the diagonal closure of its unique minimal sublamination $L'(T_z)$ by \cite[Theorem~A]{CHR11}. In particular, we must have that $L \supset L'(T_z) = \Lambda'_z$. Hence, $ \Lambda'_z \subset L \subset \Lambda_z$, as required.
\end{proof}

We conclude this section by producing an example of a hyperbolic extensions $E_\Gamma$ for which $F \colon \partial \Gamma \to \L(\free)$ is not continuous. Before explaining this example, we briefly recall some facts related
to the index theory of free group automorphisms. We refer the reader
to \cite{CoulboisHilion-botany,kapovich2014invariant,CoulboisHilion-index,CHR11} for more details.

Let $\phi\in\Out(\free)$ be an atoroidal fully irreducible element and
let $h \colon G\to G$ be a train track representative of $\phi$ (thus $h$ is
necessarily expanding and irreducible). By replacing $h$ with a sufficiently large
positive power and possibly subdividing $G$, we may further assume the following:
that the endpoints of all INPs in
$G$ (if any are present) are vertices of $G$, that every periodic vertex of $G$ is
fixed by $h$, that every periodic direction in $G$ has period $1$ and
that every periodic INP (if any are present) in $G$ has period $1$.
Here, the acronym INP stands for irreducible Nielsen path; see \cite{bestvina1992train, bestvina1997laminations, FH11} for background.
In the discussion below,  if $(p,q)\in L$ for some algebraic lamination $L$ on $\free$, we will often refer to a geodesic $\mathfrak l$ from $p$ to $q$ in $\tilde G$ as a leaf of $L$.

Recall that the \emph{Bestvina-Feighn-Handel lamination}
$L_{BFH}(\phi)$ is an algebraic lamination on $\free$ consisting of
all $(p,q)\in \partial^2\free$ such that for every finite subpath $\tilde\gamma$
of the geodesic in $\tilde\Gamma$ connecting $p$ to $q$, the projection
$\gamma$ of $\tilde\gamma$ to $G$ is a subpath of $h^n(e)$ for some $n\ge 1$ and
some (oriented) edge $e$ of $G$.  If $v$ is a periodic vertex of $G$
and $e$ is an edge starting with $v$ defining a periodic direction at
$v$ (so that $h(e)$ starts with $e$), then $e$ determines a semi-infinite reduced edge-path $\rho_e$ in
$G$ called the \emph{eigenray} of $h$ corresponding to $v$. Namely,
$\rho_e$ is defined as the path such that for every $n\ge 1$, $h^n(e)$
is an initial segment of $\rho_e$. It is known \cite{kapovich2014invariant} that
$L_{BFH}(\phi)\subseteq L(T_\phi)$ is the unique minimal sublamination
of
$L(T_\phi)$; that is, $L_{BFH}(\phi)$ is the unique minimal (with respect
to inclusion) nonempty subset of $L(T_\phi)$ which is itself an
algebraic lamination on $\free$.  It is also known that $L(T_\phi)$ is
the ``transitive closure'' of $L_{BFH}(\phi)$; that is, $L(T_\phi)$ is
the smallest diagonally closed (in the sense defined in \Cref{sec:laminations_on_free}) subset of $\partial^2 \free$ which contains $L_{BFH}(\phi)$ and is itself a lamination. Moreover,
$L(T_\phi)\setminus L_{BFH}(\phi)$ consists of finitely many
$\free$--orbits of points of $\partial^2\free$ called \emph{diagonal
  leaves} of $L(T_\phi)$, and \cite{kapovich2014invariant} gives a precise description of these diagonal leaves in terms of the train track $h$: If $v$ is a periodic
vertex and $e,e'$ are distinct periodic edge of $G$ with origin $v$,
then any lift to $\tilde\Gamma$ of the biinfinite path
$\rho_e^{-1}\rho_{e'}$ is a leaf of $L(T_\phi)$; such a leaf is called a
\emph{special leaf}. Some of the special leaves already belong to
$L_{BFH}(\phi)$ (this happens precisely when the turn $e,e'$ is
``taken'' by $h$). Special leaves that do not belong to $L_{BFH}(\phi)$
are necessarily diagonal. If $h$ has no periodic INPs, then all
diagonal leaves of  $L(T_\phi)$ arise in this way; that is, every
diagonal leaf is special. If $h$ has some periodic INPs, then
$L(T_\phi)$ admits diagonal leaves of additional kind, but their
precise description is not needed here (see \cite{kapovich2014invariant} for details).

\begin{example}[Discontinuity of $F \colon \partial \Gamma \to \L(\free)$] \label{ex:disc}
We now construct an example of a purely atoroidal convex cocompact
subgroup $\Gamma\le \Out(\free)$, with $\rank(\free)=3$, such that
the map $\partial \Gamma\to\mathcal L(\free)$ given by $z\mapsto \Lambda_z$
is not continuous with respect to the Chabauty topology on $\L(\free)$.

Suppose that we are given automorphisms $\phi$ and $\psi$ of $\free = F(a,b,c)$, the free group of rank $3$, with the following properties:
\begin{enumerate}
\item $\phi$ and $\psi$ are atoroidal and fully irreducible.
\item $\phi$ and $\psi$ are positive with respect to the basis
  $\{a,b,c\}$. Thus we can represent $\phi$ and $\psi$ by  train track
  maps on the rose $R_3$ with a single vertex $v$ and petals
  corresponding to $a,b,c$;  we denote these train track maps by $f$
  and $g$ accordingly. We further assume that we have replaced $f$ and
  $g$ by appropriate positive powers so that for each of $f,g$  every periodic vertex of $R_3$ is
fixed, that every periodic direction in $R_3$ has period $1$, and
that every periodic INP in $R_3$ (if any are present) has period $1$.
\item For the map $f$  the directions corresponding to the three edges
  of $R_3$ labelled $a,b,c$ are periodic.
\item The map $g$ has $4$ periodic directions at $v$, given by the
  edges labelled by $a,c,a^{-1},b^{-1}$. Moreover, $g$ has no periodic
  INPs.
 \end{enumerate}
At the end of this example, we will give references for where one can find automorphisms satisfying these conditions.

By \cite{bestvina1997laminations, KLping, DT1},
we may replace $\phi$ and $\psi$ by further positive powers such that $\Gamma = \langle \phi, \psi \rangle$ is a purely atoroidal, convex cocompact subgroup of $\Out(\free)$. Hence, the corresponding extension $E_\Gamma$ is hyperbolic. We will show that the map  $F\colon\partial \Gamma \to \L(\free)$ defined by $F(z) = \Lambda_z$ is not continuous.

For a leaf $\mathfrak l$ of a lamination $L\in\L(\free)$, we say that $\mathfrak l$
is \emph{positive} (correspondingly \emph{negative}) if $\mathfrak l$ is
labelled by a positive (correspondingly negative) bi-infinite
word in $F(a,b,c)$, and we say that $\mathfrak l$ is \emph{mixed} if it is
neither positive nor negative.
Note that, since the automorphism $\psi$ is positive, the
definition of $L_{BFH}(\psi)$ implies that every leaf of $L_{BFH}(\psi)$ is, up to a
flip, labelled by a positive biinfinite word in $F(a,b,c)$. Also recall that, as discussed
above, $L_{BFH}(\psi)$ is the unique minimal sublamination of $L(T_\psi)$.
Since $g$ has no periodic INPs, every mixed leaf of $L(T_\psi)$ is
special. Hence, each mixed leaf of $L(T_\psi)$ is, up to the $\free$--action and the flip,
labelled by a word of
the form $\rho_1^{-1}\rho_2$ where $\rho_1,\rho_2$ are two eigenrays
of $g$ corresponding to two distinct periodic directions at $v$.

We establish following: $(i)$ there are exactly $2$ mixed diagonal
leaves of $L(T_\psi)$, up to the $\free$--action and the flip, $(ii)$
if $\phi^n L(T_\psi)$ converges to $L$ in $\L(\free)$ then $L$ contains
at most $2$ mixed leaves, up to the $\free$--action and the flip;  $(iii)$ the lamination
$L(T_\phi)$ contains at least $3$ distinct mixed diagonal leaves, up to the
$\free$--action and the flip.

To see $(i)$, note that since we assumed that $g$ has no periodic
INPs, mixed leaves of $L(T_\psi)$ are leaves of
$L(T_\psi) \setminus L_{BFH}(\psi)$ and hence are diagonal and special
for $L(T_\psi)$. Recall that $g$ has exactly four periodic directions at
$v$, namely $a,c,a^{-1},b^{-1}$. Thus $g$ has 4 eigenrays starting at
$v$: positive eigenrays $\rho_a$, $\rho_c$ and negative eigenrays
$\rho_{a^{-1}}$, $\rho_{b^{-1}}$. Up to a flip, every mixed leaf of
$L(T_\psi)$ is then labeled by either $(\rho_a)^{-1}\rho_c$ or $(\rho_{a^{-1}})^{-1} \rho_{b^{-1}}$.
Thus $(i)$ is verified.

A similar argument can be used to prove $(ii)$, although a bit of care
is needed here in using the definition of the Chabauty topology on
$\L(\free)$. Suppose $L\in \L(\free)$ is the limit of $\phi^n
L(T_\psi)$ as $n\to\infty$. Any mixed leaf $\mathfrak l$ of $L$, up to a flip, is
labelled by a word of the form $W^{-1}Z$, where $W$ and $Z$ are each
positive rays from the identity in $F(a,b,c)$ starting with distinct
symbols.  Since $\lim_{n\to\infty} \phi^n
L(T_\psi)=L$ in the Chabauty topology, and since $\phi$ is a positive
automorphism, every such mixed leaf $\mathfrak l$ of
$L$ must be a subsequential limit of mixed leaves of $\phi^n
L(T_\psi)$, which are labelled by words of the form
$\phi^n\left((\rho_a)^{-1}\rho_c\right)$ or $\phi^n\left((\rho_{a^{-1}})^{-1} \rho_{b^{-1}}
   \right)$. Finally note that the assumptions on
$f$ imply that if
$n_i,m_i\to\infty$ are two sequences of indices such that some mixed
leaves of $\phi^{n_i}
L(T_\psi)$ labelled by words of the form
$\phi^{n_i}\left((\rho_a)^{-1}\rho_c\right)$ converge to a mixed leaf
${\mathfrak l}_1$ of $L$ and that some leaves of $\phi^{m_i}
L(T_\psi)$ labelled by words of the form
$\phi^{m_i}\left((\rho_a)^{-1}\rho_c\right)$ converge to a mixed leaf
${\mathfrak l}_2$ of $L$, then the leaves ${\mathfrak l}_1,{\mathfrak l}_2$ have the same label
(up to a shift). The same holds when $(\rho_a)^{-1}\rho_c$ is replaced
by $(\rho_{a^{-1}})^{-1} \rho_{b^{-1}}$. It follows that, up to the $\free$--action and the
flip, there are at most 2 mixed leaves in $L$, and $(ii)$ is verified.

Finally, we observe $(iii)$. Let $r_a,r_b,r_c$
be the $f$--eigenrays in $R_3$ corresponding to the $f$--periodic directions
$a,b,c$ at $v$.  Thus $r_a,r_b,r_c$ are positive semi-infinite words. Then there exist special
mixed leaves in $L(T_\phi)$ labelled by $(r_a)^{-1}r_b$, $(r_a)^{-1}r_c$,
$(r_b)^{-1}r_c$. These leaves are distinct, up to the $\free$--action
and the flip. Thus $(iii)$ is verified.

Now let $\phi^\infty=\lim_{n\to\infty} \phi^n \in\partial\Gamma$ and
$\psi^\infty=\lim_{n\to\infty} \psi^n \in\partial\Gamma$. We know that
the orbit map $\Gamma\to\F$ induces an embedding $\partial
\Gamma\to\partial \F$ which takes $\phi^\infty$ to $T_\phi$ and
$\psi^\infty$ to $T_\psi$. We argue that $F\colon \partial \Gamma \to \L(\free)$ is not continuous by
contradiction.
Indeed suppose that $F$ is continuous. Then by \Cref{th:laminations_agree}
\begin{align*}
\lim_{n\to \infty} \phi^n L(T_\psi) &= 
 \lim_{n\to \infty} \phi^n \Lambda_{\psi^\infty} \\
& = \lim_{n\to \infty} \phi^n F(\psi^\infty) \\
& = \lim_{n\to \infty}  F(\phi^n \psi^\infty)\\
&=F(\phi^\infty)=\Lambda_{\phi^\infty}= L(T_\phi),
\end{align*}
where convergence in $\L(\free)$ is with respect to the Chabauty
topology. 
Together with $(ii)$, the fact that $\lim_{n\to\infty} \phi^n
L(T_\psi)=L(T_\phi)$ implies that, up to the $\free$--action and the flip,
there are at most 2 distinct mixed leaves in $L(T_\phi)$. However,
this contradicts $(iii)$. Thus $F$ is not continuous.

To complete the example, it only remains to give an example of
automorphisms $\phi$ and $\psi$ satisfying conditions (1)--(4). The
automorphism $\phi$ can be taken to be the automorphism $\alpha_3$
constructed by Ja\"ger and Lustig in \cite{jager2008free}. This automorphism is given by $f(a) =
abc$, $f(b) = bab$, and $f(c) = cabc$, and each of the required
properties is verified by Ja\"ger and Lustig. For the automorphism
$\psi$, we may take (a rotationless power of) the automorphism constructed by Pfaff 
in Example $3.2$ of \cite{pfaff2013out}. 
This is the automorphism $g(a) = cab$, $g(b) =ca$, and $g(c) =
acab$, and the required properties are established by Pfaff. This completes the example.
\end{example}

\bibliographystyle{alphanum}

\def\cprime{$'$} \def\cprime{$'$}

\bigskip

\noindent
Department of Mathematics, Vanderbilt University \\
1326 Stevenson Center, Nashville, TN 37240, U.S.A\\
E-mail: {\tt spencer.dowdall@vanderbilt.edu}

\bigskip
\noindent
Department of Mathematics, University of Illinois at Urbana-Champaign\\
1409 W. Green Street, Urbana, IL 61801, U.S.A\\
E-mail: {\tt kapovich@math.uiuc.edu}

\bigskip
\noindent
Department of Mathematics, Yale University\\ 
10 Hillhouse Ave, New Haven, CT 06520, U.S.A\\
E-mail: {\tt s.taylor@yale.edu}

\end{document}